\numberwithin{equation}{section}
\theoremstyle{plain}
\newtheorem{theorem}{Theorem}[section]
\newtheorem{lemma}[theorem]{Lemma} 
\newtheorem{corollary}[theorem]{Corollary}
\newtheorem{proposition}[theorem]{Proposition}
\newtheorem{assumption}[theorem]{Assumption}
\newtheorem{definition}[theorem]{Definition}
\newtheorem{proposition-definition}[theorem]{Proposition - Definition}
\theoremstyle{remark}
\newtheorem{remark}{Remark}[section]
\DeclareMathOperator{\R}{{\mathbb{R}}}
\DeclareMathOperator{\N}{{\mathbb{N}}}
\DeclareMathOperator{\supp}{supp}
\DeclareMathOperator{\diam}{diam}
\DeclareMathOperator{\Int}{Int}
\begin{document}

\title{Rate of convergence for geometric inference based on the empirical Christoffel function}

\maketitle

\begin{center}
{ \normalsize Mai Trang Vu$^{*1}$, Fran\c{c}ois Bachoc$^1$, Edouard Pauwels$^2$ }\\
{ \normalsize $1$ Institut de Math\'ematiques de Toulouse, UMR5219. Universit\'e de Toulouse, CNRS. UT3, F-31062 Toulouse, France.} \\
{ \normalsize $2$ Institut de Recherche en Informatique de Toulouse. Universit\'e de Toulouse, CNRS. DEEL, IRT Saint Exupery, Toulouse, France. } \\
{ \normalsize $*$ Corresponding author.} \\
{ \normalsize Email addresses: \url{mai-trang.vu@univ-tlse3.fr}, \url{francois.bachoc@math.univ-toulouse.fr}, \url{edouard.pauwels@irit.fr}  }
\end{center}

\begin{abstract}
We consider the problem of estimating the support of a measure from a finite, independent, sample. The estimators which are considered are constructed based on the empirical Christoffel function. Such estimators have been proposed for the problem of set estimation with heuristic justifications. We carry out a detailed finite sample analysis, that allows us to select the threshold and degree parameters as a function of the sample size. We provide a convergence rate analysis of the resulting support estimation procedure. Our analysis establishes that we may obtain finite sample bounds which are comparable to existing rates for different set estimation procedures. Our results rely on concentration inequalities for the empirical Christoffel function and on estimates of the supremum of the Christoffel-Darboux kernel on sets with smooth boundaries, that can be considered of independent interest.
\end{abstract}

{\bf Keywords.} Support estimation, Christoffel function, Concentration, Finite sample, Convergence rate.

\section{Introduction}

The empirical Christoffel function $\Lambda_{\mu_n,d}$ is defined by an input measure $\mu_n$, which is a scaled counting measure uniformly supported on a cloud of data-points, and by a degree $d \in \N$. It has a strong connection to the population Christoffel function $\Lambda_{\mu,d}$ associated to a measure $\mu$ with density $w$ on an unknown input set $S \subset \R^p$. In particular, typically $\mu_n$ is obtained by a sample from $\mu$, in which case $\Lambda_{\mu_n,d}$ can be seen as an estimation of the population Christoffel function $\Lambda_{\mu,d}$ (see \citet{lasserre2019empirical}).

The (population) Christoffel function $\Lambda_{\mu,d}$ itself has a long history of research in the mathematical analysis literature. Its construction is based on multivariate polynomials of degree at most $d$ and it has strong links to the theory of orthogonal polynomials. Especially, the asymptotic behavior of the Christoffel function as the degree $d$ increases provides useful information regarding the support and density of the associated input measure $\mu$. Important references in multivariate settings include \citet{bos1994asymptotics, bos1998asymptotics, xu1999asymptotics, kroo2013christoffel1, kroo2013christoffel2}, which concern specific cases of the input measure $\mu$ and set $S$. These works not only provide valuable information on the asymptotics of the population Christoffel function as $d$ goes to infinity, but also motivates the usage of this function in statistical contexts, especially in support recovery. Indeed, \citet{lasserre2019empirical} provides a thresholding scheme using the Christoffel function which approximates the compact support $S$ of the measure $\mu$ with strong asymptotic guarantees. More precisely, \citet{lasserre2019empirical} considers a family of polynomial sublevel sets $S_k = \{x \in \R^p: \Lambda_{\mu,d_k}(x) \ge \gamma_k\}$ with $k \in \N$, where the degree $d_k$ increases with $k$ and where the threshold $\gamma_k$ is well-chosen between a lower bound of the Christoffel function inside $S$ and an upper bound outside $S$. Another thresholding scheme can be found in \citet{marx2019tractable}, which provides useful results on the relation between $S$ and its estimator $S_k$. The topic of set estimation based on the population Christoffel function is thus currently a subject of active interest with a large range of applications in machine learning (see \citet{pauwels2016sorting, lasserre2019empirical}).

In a statistical context, the population Christoffel function $\Lambda_{\mu,d}$ is not available and only the empirical Christoffel function $\Lambda_{\mu_n,d}$ is, based on the observed empirical measure $\mu_n$. Let us detail results and discussions presented in \citet{lasserre2019empirical}.
Statistical procedures based on the empirical Christoffel function have three important features: (i) computations are remarkably simple and involve no optimization procedures, (ii) they scale efficiently with the number of observations and (iii) the procedures are affine invariant.
Furthermore, when considering a compactly supported population measure $\mu$ as well as its empirical counterpart $\mu_n$ supported on a sample of $n$ vectors in $\R^p$, drawn independently from $\mu$ and when the degree $d$ is fixed, the empirical Christoffel function $\Lambda_{\mu_n,d}$ converges uniformly to $\Lambda_{\mu,d}$, almost surely with respect to the draw of the random sample. This asymptotic result is appealing 
given the strong connections between $\Lambda_{\mu,d}$ and the support of $\mu$, which suggest that $\Lambda_{\mu_n,d}$ could be used for inferring the support of the population measure $\mu$. Yet more precise quantifications on the relation between sample size $n$ and the degree bound $d$ are required, but \cite{lasserre2019empirical} does not provide any explicit way to choose the degree $d$ as a function of $n$, and does not provide any convergence guaranty for the full plugin approach based on the empirical Christoffel function $\Lambda_{\mu_n,d}$, when $d$ depends on $n$. These shortcomings constitute one of the main motivations for the present work.

\subsection*{Contribution}
Our contribution is twofold:
\begin{enumerate}
	\item We adapt the thresholding scheme in \citet{lasserre2019empirical}, using the empirical Christoffel function, by a careful tuning of the degree $d$ and the threshold level set $\gamma$ in the limit of large sample size. This scheme allows to estimate the compact support $S$ of a measure. Our results include, under regularity assumptions on $\mu$, a quantitative rate of convergence analysis which was unknown for this estimator. More precisely, we consider the Hausdorff distance between the original set $S$ and its estimator $S_n$ and between their respective boundaries, as well as the Lebesgue measure of their symmetric difference. These results rigorously establish the property that, when $n$ is large enough, these distances decrease to zero with an explicit rate.
	\item This analysis relies on results which could be considered of independent interest. First, we provide a quantitative concentration result regarding the convergence of the empirical Christoffel function to its population counterpart. Second, this concentration relies on an estimate of the supremum of the Christoffel Darboux kernel on the support of the underlying measure. We prove that, for a large class of slowly decaying densities with smooth support boundary, this supremum is at most polynomial in the degree $d$. This shows that the considered class of measures is regular in the sense of the Bernstein-Markov property, see \citep{piazzon2016bernstein} and references therein.
\end{enumerate}

\subsection*{Comparison with the existing literature on set estimation}
Support inference (more generally set estimation) has been a topic of research in the statistics literature for more than half a century. The main subject of interest is to infer a set (support of an input measure, level set of an input density function,...) based on samples that are drawn independently from an unknown distribution. Introduction and first results on this subject can be found in \citet{renyi1963konvexe,geffroy1964probleme}, which motivate a subsequent analysis of estimators based on convex hulls for convex domains \citep{chevalier1976estimation} or unions of balls for non-convex sets \citep{devroye1980detection}. More involved estimators follow, such as the excess mass estimator \citep{polonik1995measuring}, the plug-in approach based on the use of density estimators \citep{cuevas1997plug, molchanov1998limit, cuevas2006plug} or the $R$-convex hull of the samples, $R$ being a radius, \citep{rodriguez2007set}.

Those works also motivated the development of minimax statistical analysis for the set estimation problem. We might find minimax results for the recovery of sets with (piecewise) smooth boundaries in \citet{mammen1995asymptotical}, for the estimation of smooth or convex density level sets in \citet{tsybakov1997nonparametric} and for the plug-in approach in \citet{rigollet2009optimal}. More current works related to set estimation include local convex hull estimators \citep{aaron2016local} and cone-convex hulls \citep{cholaquidis2014poincare}.

We obtain convergence rates both in terms of symmetric difference measure, and Hausdorff distance, which can be arbitrarily close to $n^{-1/(p + 2r +2)}$ where $n$ is the sample size, $p$ is the ambient dimension and $r\geq 0$ measures the speed of decrease of the population density around the boundary of the support ($r = 0$ corresponds to a density which is uniformly bounded away from $0$).

Our convergence rates hold under the assumption of a ball of fixed radius $R$ rolling inside and outside the support $S$. The rolling ball assumption is common \citep{cuevas2012statistical,walther1997granulometric,walther1999generalization}. Under this assumption, and in the case $r=0$, \cite{rodriguez2007set} showed that the $R$-convex hull of the samples achieves the rate of convergence $n^{-2/(p+1)}$, for the Hausdorff distance\footnote{Similarly as we do, they consider Hausdorff distances both between sets and between their boundaries.} and the symmetric difference measure.

In \citet{cuevas2004boundary}, the Devroye and Wise estimator is shown to have a convergence rate of order $\left( \log(n)/n\right)^{1/p}$ in Hausdorff distance, under similar geometric assumptions as ours corresponding to the choice $r = 0$. Later on, \citet{biau2008exact} proved for the same estimator, under similar assumptions as ours, a rate which can be arbitrarily close to $n^{-1 / (p + r)}$ for the measure of the symmetric difference for $r = 1$ and $r = 2$. Earlier work presented in \citet{mammen1995asymptotical} proved that $n^{-1/p}$ is minimax optimal for the symmetric difference measure for a special class of piece-wise $C^1$ boundaries. Recently \citet{patschkowski2016adaptation} proved a minimax lower bound on the convergence rate for symmetric difference, of order $n^{-1/(p+r)}$ for adaptive estimators to unknown $r \leq 2$.

Although the rates which we obtain are not optimal, for instance when compared to \cite{rodriguez2007set} in the case $r=0$, the dependency in the dimension and speed of decrease of the density seem reasonable in comparison to existing rates. Let us insist on the fact that our analysis allows to cover a wide range of density decrease regimes and a variety of divergence measures between sets for which the results for other estimates are not known. A detailed comparison between all geometric conditions on the support, its boundary and different notions of divergence between sets is out of reach given the diversity of assumptions in the literature, and as such we only consider a high level general discussion based on orders of magnitude here.

From a computational point of view, our approach using the empirical Christoffel function has important advantages. The most important one is that this approach estimates the support of $\mu$ by a polynomial sublevel set, which is conceptually simple to manipulate. As an important illustration example, consider the situation when one is interested in performing numerical optimization over the estimated support. This situation can arise when a criterion is to be optimized over a feasible domain, which needs to be estimated from data. In this optimization case, the fact the the estimated support is a polynomial sublevel set is beneficial, for instance one can use nonlinear optimization techniques such as Sequential Quadratic Programming (SQP) or barrier functions. If the support is estimated by an union of balls centered at the observations \citep{devroye1980detection}, the estimated support may be less amenable to numerical optimization. Similarly, the $R$-convex hull estimator \citep{rodriguez2007set} is a set over which optimization may be challenging.

In terms of numerical implementation, our approach requires to compute and store the inverse of a matrix of size $s(d_n)  = o(n)$ (see Sections \ref{section_preliminaries} and \ref{section_main_results}) where $d_n$ is the selected degree for the sample size $n$. Then, each input point can be tested to belong to the estimated support or not, with the cost of evaluating a quadratic form of size $s(d_n)$ and of computing $s(d_n)$ monomials in dimension $p$. In practice, $s(d_n)$ is smaller than $n$ (to avoid rank deficiencies), and in our asymptotic results, $d_n$ is selected such that $s(d_n) = o(n)$. Hence our approach relies on reasonably simple and classical computations.
In comparison, for instance, computing the $R$-convex hull estimator \citep{rodriguez2007set} in general dimension $p$ may turn out to be challenging.
In dimension $p=2$, a point can be tested to belong to this set with computational cost $O(n \log n)$ \citep{edelsbrunner1983shape}, but we are not aware of similar efficient algorithms for larger $p$.

\subsection*{Organisation of the paper}
Section \ref{section_preliminaries} introduces the notation and definitions which will be used throughout the text, especially the definition of the population and empirical Christoffel functions and their known properties. In Section \ref{section_main_results}, we present our main assumptions as well as our results on support estimation and convergence of the empirical Christoffel function to the population one. Numerical illustration of the method appears in Section \ref{sec:numerics} for synthetic data in the plane and an outlier detection benchmark in dimension 6. Concluding remarks are provided in Section \ref{section:conclusion}.
The proofs are postponed to the appendix. The appendix also contains additional results of interest on upper and lower bounds on the Christoffel function, outside and inside the support.

\section{Preliminaries} \label{section_preliminaries}
\subsection{General notation}
When $\mu$ is a measure on $\R^p$, we denote by $\supp \mu$ the support of $\mu$. Let $f$ be a measurable function from $\R^p$ to $\R^p$. The push-forward measure of $\mu$ by $f$, denoted by $\mu_{\# f}$, is a measure on $\R^p$ defined by: $\mu_{\# f}(K) = \mu(f^{-1}(K))$ for all Borel sets $K$ of $\R^p$.
Given an arbitrary (measurable) set $S \subset \R^p$, we denote by $\Int S$ the interior of $S$, $\partial S$ the boundary of $S$, $S^c$ the complement of $S$, $\lambda(S)$ the Lebesgue measure of $S$, $\diam (S)$ the diameter of $S$, $\lambda_S$ the Lebesgue measure restricted on $S$ and $\mu_S$ the uniform measure on $S$ (when $\lambda(S)>0$).

When $M$ is a square matrix, we denote by $\|M\|$ the operator norm of $M$, i.e. $$\|M\| = \sup\limits_{x \neq 0} \dfrac{\|Mx\|_2}{\|x\|_2} = \sup\limits_{\|x\|_2=1} \|Mx\|_2.$$
If in addition, $M$ is symmetric and positive definite, we can define its inverse $M^{-1}$ and its unique square root $M^{1/2}$ which are also symmetric positive definite matrices. We denote by $M^{-1/2}$ the inverse of the square root of $M$, which is also symmetric and positive definite.

For $x, y \in \R^p$, we let $d(x,y)$ be the Euclidean norm between $x$ and $y$. For $A \subset \R^p$ and $x \in \R^p$, let $d(x,A) = \inf_{y \in A} d(x,y)$.

We also denote by $B_r(x)$ the open Euclidean ball of radius $r>0$ and centered at $x \in \R^p$ while $\overline{B}_r(x)$ is the associated closed ball. In particular, $B$ denotes the unit Euclidean ball $\overline{B}_1(0)$.

We denote by 
\begin{align}
    \omega_p := \dfrac{2 \pi^{\frac{p+1}{2}}}{\Gamma \left(\frac{p}{2}+1\right)}
     \label{eqn_def_omegap}
\end{align}
the surface area of the $p$-dimensional unit sphere in $\R^{p+1}$. We denote by
\begin{align}
    c_r := \dfrac{\Gamma(p/2+r+1)}{\pi^{p/2} \Gamma(r+1)}
    \label{eqn_def_cr}
\end{align}
the normalization constant of the measure $\nu_r$ whose density is $(1-\|z\|_2^2)^r$ on the unit ball $B$ (see e.g. \citet{xu1999summability}, page 2441, (2.2)). Finally, for $\alpha \in \R$ and $k \in \N$, the associated binomial coefficient is defined as follows:
	$$\binom{\alpha}{k} := \dfrac{\Gamma(\alpha+1)}{\Gamma(k+1) \Gamma(\alpha-k+1)} = \dfrac{\alpha(\alpha-1)(\alpha-2) \ldots (\alpha-k+1)}{k!}.$$
\subsection{Problem setting}
The following notation and assumptions will be standing throughout the text.
\begin{assumption}\hfill
    \begin{enumerate}
        \item $\mu$ is a Borel probability measure on $\R^p$ and its support $S:= \mathrm{supp}(\mu)$ is compact with nonempty interior.
        \item $n \in \N$, $n>0$ is fixed and $X_1,\ldots,X_n$ are independent and identically distributed random vectors  with distribution $\mu$. The corresponding empirical measure is denoted by 
        \begin{align}
            \mu_n = \frac{1}{n} \sum_{i=1}^n \delta_{X_i},
            \label{eqn_def_mun}
        \end{align}
    \end{enumerate}
    where $\delta_x$ is the dirac measure at $x \in \R^p$.
    \label{aspt_statistical_setting}
\end{assumption}
Using the notations of Assumption \ref{aspt_statistical_setting}, given the sample $(X_i)_{i=1}^n$ our goal is to build an estimator $S_n(X_1, \ldots, X_n) \subset \R^p$ in order to approximate $S$. 
We construct a specific kind of estimator $S_n$ based on the empirical Christoffel function. The rest of this section is dedicated to the presentation of further background needed to define our estimator.  Convergence of our estimator to $S$ using different criteria is described next in Section \ref{section_main_results}.

\subsection{The Christoffel function} \label{subsec_Christoffel_function}

\subsubsection*{Multivariate polynomials}
Polynomials of $p$ variables are indexed by the set $\mathbb{N}^p$ of multi-indices.
For example, given a set of $p$ variables $x=(x_1, \ldots, x_p)$ and a multi-index $\alpha=(\alpha_1, \ldots, \alpha_p) \in \N^p$, the \textit{monomial} $x^\alpha$ is given by $x^\alpha = x_1^{\alpha_1}x_2^{\alpha_2} \ldots x_p^{\alpha_p}$ which \textit{degree} is $$\deg x^\alpha = |\alpha| = \sum\limits_{i=1}^p \alpha_i.$$ 
The \textit{space of polynomials of degree at most $d$} is the linear span of monomials of degree up to $d$: $$\Pi_d^p := \text{span}\{x^\alpha: \alpha \in \mathbb{N}^p, |\alpha| \le d\}.$$
The \textit{space of polynomials of $p$ variables} is $$\Pi^p := \bigcup\limits_{d \in \mathbb{N}} \Pi_d^p.$$
The \textit{degree} of a polynomial $P \in \Pi^p$, denoted by $\deg P$, is the maximum degree of its monomial associated to a nonzero coefficient (the null polynomial has degree 0). Note that $\dim \Pi_d^p = \binom{d+p}{d}$. We denote by $s(d)$ the quantity $\binom{d+p}{d}$ throughout the text.

\subsubsection*{Orthonormal polynomials}
Since $\mu$ satisfies Assumption \ref{aspt_statistical_setting}, we have the following inner product: 
$$\left<P, Q \right>_{\mu} = \int\limits_{\R^p} P(x)Q(x)d\mu(x),$$
where $P, Q$ are polynomials. A sequence of \textit{orthonormal polynomials with respect to $\mu$} is a sequence of polynomials $\{P_\alpha: \alpha \in I\}$ in $\Pi^p$ such that $\left<P_\alpha, P_\beta \right>_\mu = \delta(\alpha, \beta)$ \footnote{$\delta(\alpha, \beta)$ is $1$ if $\alpha = \beta$, $0$ otherwise.} for all $\alpha, \beta \in I$. The Gram-Schmidt orthonormalization process guarantees the existence of such an orthonormal sequence. Restricting the degree up to $d \in \N$, we obtain a sequence of orthonormal polynomials $\{P_j: 1 \le j \le s(d)\}$, which is also a basis of $\Pi_d^p$.

\subsubsection*{Moment matrix}
Now, let $\{P_j: 1 \le j \le s(d)\}$ be a basis of $\Pi_d^p$ (not necessarily orthonormal). 
We denote
\begin{eqnarray*}
	v_d: \quad \R^p &\longrightarrow& \R^{s(d)}\\
	\quad x &\longmapsto& (P_1(x), P_2(x), \ldots, P_{s(d)}(x))^T.
\end{eqnarray*}
The \textit{moment matrix  of $\mu$ with respect to the basis $\{P_j\}_{j=1}^{s(d)}$} is a square matrix of dimension $s(d)$ which is defined by
\begin{eqnarray} \label{eqn_moment matrix}
	M_{\mu,d} &=& \int\limits_{\R^p} v_d(x) v_d(x)^T d\mu(x),
\end{eqnarray}
where the integral is taken entry-wise.
We have the following property of the moment matrix which is useful in the sequel.
\begin{lemma} \label{lm_moment_matrix_property}
	Let $P, Q \in \Pi^p_d$ have representations with respect to the basis $\{P_j: 1 \le j \le s(d)\}$ of the form:
	\begin{eqnarray*}
		P = \sum\limits_{j=1}^{s(d)} (c_P)_j P_j = c_P^T v_d, \quad
		Q = \sum\limits_{j=1}^{s(d)} (c_Q)_j P_j = c_Q^T v_d, 
	\end{eqnarray*}
	where $c_P, c_Q \in \R^{s(d)}$.
	Then $$\int\limits_{\R^p} P(x)Q(x)d\mu(x) = c_P^T M_{\mu,d} c_Q.$$
\end{lemma}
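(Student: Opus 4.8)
The plan is to reduce the statement to a direct computation using bilinearity of the integral and the definition \eqref{eqn_moment matrix} of the moment matrix. First I would observe that, pointwise in $x \in \R^p$, the product $P(x)Q(x)$ can be rewritten purely in terms of $v_d$: since $P = c_P^T v_d$ and $Q = c_Q^T v_d$ with $c_P, c_Q \in \R^{s(d)}$ fixed vectors, and since $c_Q^T v_d(x)$ is a scalar equal to its own transpose $v_d(x)^T c_Q$, we get
\begin{align*}
P(x) Q(x) = \bigl(c_P^T v_d(x)\bigr)\bigl(v_d(x)^T c_Q\bigr) = c_P^T \, v_d(x) v_d(x)^T \, c_Q.
\end{align*}

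Next I would integrate this identity against $\mu$. Because $c_P$ and $c_Q$ do not depend on $x$, and because the integral in \eqref{eqn_moment matrix} is taken entry-wise, linearity of the integral lets us pull the constant vectors outside:
\begin{align*}
\int_{\R^p} P(x) Q(x) \, d\mu(x) = \int_{\R^p} c_P^T \, v_d(x) v_d(x)^T \, c_Q \, d\mu(x) = c_P^T \left( \int_{\R^p} v_d(x) v_d(x)^T \, d\mu(x) \right) c_Q = c_P^T M_{\mu,d} c_Q,
\end{align*}
which is the claimed equality.

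The only point that needs a word of justification is that the objects being integrated are well defined, i.e. that each entry of $v_d(x) v_d(x)^T$ is $\mu$-integrable and that the matrix $M_{\mu,d}$ exists. This is immediate from Assumption \ref{aspt_statistical_setting}: the support $S$ of $\mu$ is compact, so each basis polynomial $P_j$ is bounded on $S$, hence so is every product $P_j P_k$, and since $\mu$ is a probability measure all the relevant integrals are finite. I do not expect any genuine obstacle here; the lemma is essentially a restatement of the definition of $M_{\mu,d}$ together with the coordinate expansion of polynomials in the basis $\{P_j\}_{j=1}^{s(d)}$, and the proof is a two-line computation once the pointwise rewriting of $P(x)Q(x)$ is in place.
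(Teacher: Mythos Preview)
Your proof is correct and is exactly the natural argument: rewrite $P(x)Q(x)$ as $c_P^T v_d(x) v_d(x)^T c_Q$, pull the constant vectors through the entry-wise integral, and identify the remaining matrix with $M_{\mu,d}$ via \eqref{eqn_moment matrix}. The paper itself does not supply a proof of this lemma, treating it as immediate from the definition of the moment matrix; your two-line computation is precisely what that omission presupposes.
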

\begin{remark} \label{rk_psd_of_moment_matrix}
	$M_{\mu,d}$ is a symmetric, positive definite square matrix of dimension $s(d)$. In fact, for any $c \in \R^{s(d)}$, we set $P_c = \sum\limits_{j=1}^{s(d)} c_j P_j$ and by Lemma \ref{lm_moment_matrix_property}, we have
	\begin{eqnarray} \label{eqn_psd_of_moment_matrix}
	c^T M_{\mu,d}c = \int\limits_{\R^p} P_c(x)^2 d\mu(x) \ge 0.
	\end{eqnarray}
	Since $\Int S \neq \emptyset$, $S$ is polynomial determining, that is, the equality of two polynomials is implied from their equality on the support. By combining this fact with \eqref{eqn_psd_of_moment_matrix}, we obtain that $M_{\mu,d}$ is positive definite.
\end{remark}

\subsubsection*{The Christoffel - Darboux kernel}

The space of polynomials of degree at most $d$ along with the inner product defined by $\mu$ $(\Pi^p_d, \left<.\,,.\right>_\mu)$ is then a finite-dimensional Hilbert space of functions from $\R^p$ to  $\R$ and $\dim \Pi_d^p = s(d)$. Moreover, $(\Pi^p_d, \left<.\,,.\right>_\mu)$ is a reproducing kernel Hilbert space (RKHS)  (see for example \citet{aronszajn1950theory}). Indeed, we notice that the function $P \mapsto P(x)$ is linear on the space of polynomials and $\Pi_d^p$ is finite-dimensional (hence all norms are equivalent), therefore we obtain the continuity of this function on $(\Pi^p_d, \left<.\,,.\right>_\mu)$ for any $x \in \R^p$.
This property of $(\Pi^p_d, \left<.\,,.\right>_\mu)$ guarantees the existence and uniqueness of a reproducing kernel which is defined as follows.
\begin{definition} \label{def_CD_kernel}
	The \textit{Christoffel - Darboux kernel}, denoted by $\kappa_{\mu,d}$, is the reproducing kernel of the RKHS $(\Pi_d^p, \left< . \, ,. \right>_\mu)$, i.e. for all  $x \in \R^p$ and $P \in \Pi_d^p$, we have $\kappa_{\mu,d}(x,.) \in \Pi_d^p$ and
	$$\left< P, \kappa_{\mu,d}(x,.) \right>_\mu = \int\limits_{\R^p} P(y) \kappa_{\mu,d}(x,y)d\mu(y) = P(x).$$
\end{definition}

The two following propositions are explicit formulas for the Christoffel - Darboux kernel. The first one is its expression as a sum of squares of orthonormal polynomials, while the other is a computation based on the moment matrix (and does not require an orthonormal basis).
\begin{proposition} [see e.g \citet{dunkl2014orthogonal}, page 97, (3.6.3)] \label{prop_CD_kernel_SOS}
	Let $\{P_j\}_{j=1}^{s(d)}$be an orthonormal basis of $\Pi_d^p$ with respect to $\mu$. Then for all $x, y \in \R^p$
	$$\kappa_{\mu,d}(x,y) = \sum\limits_{j=1}^{s(d)} P_j(x) P_j(y).$$
\end{proposition}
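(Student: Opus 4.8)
The plan is to verify directly that the function
$$K(x,y) := \sum_{j=1}^{s(d)} P_j(x) P_j(y)$$
satisfies the two defining properties of the reproducing kernel of the RKHS $(\Pi_d^p, \langle\cdot,\cdot\rangle_\mu)$, and then invoke the uniqueness of the reproducing kernel — already recorded in the text, since $(\Pi_d^p,\langle\cdot,\cdot\rangle_\mu)$ has been shown to be a finite-dimensional RKHS — to conclude that $K = \kappa_{\mu,d}$, i.e. that the displayed formula holds.

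First I would fix $x \in \R^p$ and observe that $K(x,\cdot) = \sum_{j=1}^{s(d)} P_j(x)\, P_j(\cdot)$ is a finite linear combination of the basis polynomials $P_j$, hence an element of $\Pi_d^p$; this takes care of the membership requirement $\kappa_{\mu,d}(x,\cdot)\in\Pi_d^p$ from Definition \ref{def_CD_kernel}. The main (and essentially only) computational step is the reproducing property: for an arbitrary $P \in \Pi_d^p$, expand $P$ in the orthonormal basis as $P = \sum_{k=1}^{s(d)} c_k P_k$, where orthonormality of $\{P_j\}$ gives $c_k = \langle P, P_k\rangle_\mu$. Then, by bilinearity of the inner product,
$$\langle P, K(x,\cdot)\rangle_\mu = \sum_{j=1}^{s(d)} P_j(x)\,\langle P, P_j\rangle_\mu = \sum_{j=1}^{s(d)} c_j\, P_j(x) = P(x),$$
the last equality being the expansion of $P$ evaluated at $x$. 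This shows $K$ reproduces point evaluation on $\Pi_d^p$, so by uniqueness $K \equiv \kappa_{\mu,d}$.

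I do not expect any genuine obstacle here: the statement is a textbook consequence of the RKHS structure, and all the prerequisites (continuity of point evaluation, existence and uniqueness of the reproducing kernel) have already been established in the excerpt. The only point requiring care is to use orthonormality of the basis precisely where the coordinates $c_k$ are identified with the inner products $\langle P, P_k\rangle_\mu$ — with a non-orthonormal basis this identification would fail and the Gram (moment) matrix would enter, as in Lemma \ref{lm_moment_matrix_property}. One could additionally remark that $\sum_j P_j(x)P_j(y)$ is invariant under replacing $\{P_j\}$ by $\{Q P_j\}$ for any orthogonal matrix $Q$, which is consistent with the fact that the left-hand side does not depend on the chosen orthonormal basis, but this observation is not needed for the proof.
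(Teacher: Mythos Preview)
Your proof is correct and is the standard textbook verification of the reproducing-kernel formula in a finite-dimensional RKHS. Note that the paper does not actually give its own proof of this proposition: it is stated with a reference to \citet{dunkl2014orthogonal} and used as a known fact, so there is no in-paper argument to compare against.
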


\begin{proposition} [see e.g. \citet{lasserre2019empirical}, page 7, (3.1)] \label{prop_CD_kernel_moment_matrix}
	Let $v_d = (P_1, P_2, \ldots P_{s(d})^T$ be a basis of $\Pi^p_d$ and $M_{\mu,d}$ be the corresponding moment matrix (see \eqref{eqn_moment matrix}). For all $x,y \in \R^p$, we have
	$$\kappa_{\mu,d}(x,y) = v_d(x)^T M_{\mu,d}^{-1} v_d(y).$$
\end{proposition}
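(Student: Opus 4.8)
The plan is to verify directly that the function $k(x,y) := v_d(x)^T M_{\mu,d}^{-1} v_d(y)$ satisfies the two defining properties of the reproducing kernel of the RKHS $(\Pi_d^p, \left<\cdot,\cdot\right>_\mu)$ established just before the statement, and then to invoke the uniqueness of the reproducing kernel. Note first that $M_{\mu,d}^{-1}$ is well defined by Remark \ref{rk_psd_of_moment_matrix}, so $k$ makes sense. For the membership property, fix $x \in \R^p$; then $y \mapsto k(x,y) = \big(M_{\mu,d}^{-1} v_d(x)\big)^T v_d(y)$ is a linear combination of $P_1,\ldots,P_{s(d)}$ with coefficient vector $M_{\mu,d}^{-1} v_d(x) \in \R^{s(d)}$, hence lies in $\Pi_d^p$.

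For the reproducing property, take an arbitrary $P \in \Pi_d^p$ and write $P = c_P^T v_d$ as in Lemma \ref{lm_moment_matrix_property}. Since for each $y$ both $c_P^T v_d(y)$ and $v_d(x)^T M_{\mu,d}^{-1} v_d(y)$ are scalars, one may rearrange the product inside the integral as $v_d(x)^T M_{\mu,d}^{-1} v_d(y) v_d(y)^T c_P$, pull the $x$-dependent and constant factors outside the integral, and use the definition \eqref{eqn_moment matrix} of the moment matrix:
$$\int_{\R^p} P(y) k(x,y)\, d\mu(y) = v_d(x)^T M_{\mu,d}^{-1} \left( \int_{\R^p} v_d(y) v_d(y)^T d\mu(y) \right) c_P = v_d(x)^T M_{\mu,d}^{-1} M_{\mu,d}\, c_P = v_d(x)^T c_P = P(x).$$
By Definition \ref{def_CD_kernel} and the uniqueness of the reproducing kernel, this gives $k = \kappa_{\mu,d}$.

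An equivalent, more conceptual route uses Proposition \ref{prop_CD_kernel_SOS}: pick an orthonormal basis $\{Q_j\}_{j=1}^{s(d)}$ of $\Pi_d^p$ and let $A$ be the change-of-basis matrix with $(Q_1,\ldots,Q_{s(d)})^T = A v_d$; orthonormality then reads $A M_{\mu,d} A^T = I$, so $M_{\mu,d}^{-1} = A^T A$, and $\sum_j Q_j(x) Q_j(y) = (A v_d(x))^T (A v_d(y)) = v_d(x)^T M_{\mu,d}^{-1} v_d(y)$. I would present the direct verification as the main argument, since it does not rely on the existence of an orthonormal basis.

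There is essentially no hard step here: the only points requiring care are the bookkeeping with transposes when commuting the scalar factors under the integral sign, and the appeal to Remark \ref{rk_psd_of_moment_matrix} to guarantee that $M_{\mu,d}$ is invertible — which itself ultimately rests on $S$ being polynomial determining, i.e. on $\Int S \neq \emptyset$.
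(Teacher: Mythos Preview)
Your proof is correct; both the direct verification of the reproducing property and the alternative change-of-basis argument are valid. Note however that the paper does not actually prove this proposition: it merely states it with a citation to \citet{lasserre2019empirical}, so there is no in-paper proof to compare against.
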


\begin{remark} 
	By Proposition \ref{prop_CD_kernel_SOS}, $$\kappa_{\mu,d}(x,x) = \sum\limits_{j=1}^{s(d)} P_j(x)^2 \ge 0,$$
	where $\{P_j: 1 \le j \le s(d)\}$ is an orthonormal basis of $\Pi_d^p$. Moreover, the $P_j(x)$ cannot be all $0$ since otherwise, the polynomial $1$ will be $0$ at point $x$, which is impossible. So $\kappa_{\mu,d}(x,x) > 0$ for all $x \in \R^p$.
\end{remark}

\subsubsection*{The Christoffel function}
Now, we will define the (population) Christoffel function and provide some of its properties which are useful for the sequel.
\begin{definition} \label{def_Chf_function}
	Let $d \in \mathbb{N}$. The \textit{Christoffel function} associated to $\mu$ and $d$ is the function
	\begin{eqnarray*}
		\Lambda_{\mu,d}: \R^p &\longrightarrow&  \R_+ \\
		\quad z &\longmapsto& \dfrac{1}{\kappa_{\mu,d}(z,z)}.
	\end{eqnarray*} 
\end{definition}

Note that the Christoffel function is well-defined by the positivity of the Christoffel - Darboux kernel. The following proposition is an equivalent definition of the Christoffel function.
\begin{proposition} [see e.g. \citet{dunkl2014orthogonal}, Theorem 3.6.6] \label{prop_Chf_function}
	$$\Lambda_{\mu,d}(z) = \min \left\{ \int\limits_{\R^p} P^2 d\mu: P \in \Pi_d^p, P(z)=1 \right\}.$$
\end{proposition}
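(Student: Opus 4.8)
The plan is to invoke the standard variational characterization of the reproducing kernel in a finite-dimensional RKHS. Recall from Definition \ref{def_CD_kernel} that $(\Pi_d^p, \langle \cdot, \cdot\rangle_\mu)$ is an RKHS with reproducing kernel $\kappa_{\mu,d}$, and write $\|P\|_\mu^2 := \langle P, P\rangle_\mu = \int_{\R^p} P^2\, d\mu$ for $P \in \Pi_d^p$. The claim is then equivalent to
\[
\min\left\{\|P\|_\mu^2 : P \in \Pi_d^p,\ P(z) = 1\right\} = \frac{1}{\kappa_{\mu,d}(z,z)} = \Lambda_{\mu,d}(z).
\]

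First I would establish the lower bound. For any $P \in \Pi_d^p$ with $P(z) = 1$, the reproducing property gives $1 = P(z) = \langle P, \kappa_{\mu,d}(z,\cdot)\rangle_\mu$. Applying the Cauchy--Schwarz inequality and then the reproducing property once more, now with the admissible element $\kappa_{\mu,d}(z,\cdot) \in \Pi_d^p$, yields
\[
1 = \langle P, \kappa_{\mu,d}(z,\cdot)\rangle_\mu^2 \le \|P\|_\mu^2\, \|\kappa_{\mu,d}(z,\cdot)\|_\mu^2 = \|P\|_\mu^2\, \kappa_{\mu,d}(z,z).
\]
Since $\kappa_{\mu,d}(z,z) > 0$, this gives $\|P\|_\mu^2 \ge 1/\kappa_{\mu,d}(z,z) = \Lambda_{\mu,d}(z)$, so the infimum of the right-hand side is at least $\Lambda_{\mu,d}(z)$.

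Second, I would exhibit an explicit minimizer to show the bound is tight, so that the infimum is attained and is a genuine minimum. Set $P^\ast := \kappa_{\mu,d}(z,\cdot)/\kappa_{\mu,d}(z,z) \in \Pi_d^p$; then $P^\ast(z) = 1$, and using the reproducing property with $P = \kappa_{\mu,d}(z,\cdot)$ one computes
\[
\|P^\ast\|_\mu^2 = \frac{\|\kappa_{\mu,d}(z,\cdot)\|_\mu^2}{\kappa_{\mu,d}(z,z)^2} = \frac{\kappa_{\mu,d}(z,z)}{\kappa_{\mu,d}(z,z)^2} = \frac{1}{\kappa_{\mu,d}(z,z)} = \Lambda_{\mu,d}(z),
\]
which together with the lower bound proves the identity. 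Alternatively, expanding $P$ in an orthonormal basis $\{P_j\}_{j=1}^{s(d)}$ of $\Pi_d^p$, so that $\|P\|_\mu^2 = \sum_j c_j^2$ and $P(z) = \sum_j c_j P_j(z)$, reduces the problem to the finite-dimensional least-squares problem $\min\{\sum_j c_j^2 : \sum_j c_j P_j(z) = 1\}$, whose optimal value is $1/\sum_j P_j(z)^2 = 1/\kappa_{\mu,d}(z,z)$ by Proposition \ref{prop_CD_kernel_SOS}.

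No step here is a substantial obstacle: this is the textbook characterization of the minimal-norm interpolant in an RKHS. The only points requiring a bit of care are purely bookkeeping: that $\kappa_{\mu,d}(z,\cdot)$ is an admissible element of $\Pi_d^p$ (guaranteed by Definition \ref{def_CD_kernel}), that $\kappa_{\mu,d}(z,z) > 0$ (noted in the remark following Proposition \ref{prop_CD_kernel_moment_matrix}) so that the division and the normalization defining $P^\ast$ make sense, and that the constraint set $\{P \in \Pi_d^p : P(z) = 1\}$ is nonempty so that the minimum is taken over a nonempty set.
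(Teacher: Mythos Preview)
Your proof is correct and is the standard RKHS/Cauchy--Schwarz argument for the variational characterization of the Christoffel function. Note that the paper itself does not prove this proposition: it simply cites \citet{dunkl2014orthogonal}, Theorem~3.6.6, so there is no in-paper proof to compare against, and your argument would serve perfectly well as a self-contained justification.
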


We now highlight the following properties of the Christoffel function which will be useful in the sequel. The following proposition guarantees the invariance of the Christoffel function by affine transformations.
\begin{proposition} [see e.g. \citet{pauwels2016sorting}, Lemma 1] \label{prop_Chf_function_affine invariant}
	Let $A$ be an invertible affine map from $\R^p$ to $\R^p$. Recall that $\mu_{\#A}$ is the push-forward measure of $\mu$ by $A$. Then for all $x \in \R^p$,
	$$\Lambda_{\mu,d}(x) = \Lambda_{\mu_{\#A},d}(Ax).$$
\end{proposition}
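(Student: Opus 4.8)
The plan is to derive the identity from the variational characterization of the Christoffel function in Proposition~\ref{prop_Chf_function}, combined with the transfer (change of variables) formula for push-forward measures. The variational formulation is convenient because it turns the claim into a statement about two constrained minimization problems having the same value, and an invertible affine substitution sets up an exact bijection between their feasible sets.

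First I would record the elementary algebraic fact that precomposition with an invertible affine map is a linear bijection of $\Pi_d^p$ onto itself. Writing $A(x) = Lx + b$ with $L$ invertible, each monomial $x^\alpha$ is mapped to $x^\alpha \circ A$, a polynomial of degree at most $|\alpha|$, so $P \circ A \in \Pi_d^p$ whenever $P \in \Pi_d^p$; applying the same observation to $A^{-1}$ shows that $P \mapsto P \circ A$ is a bijection of $\Pi_d^p$ with inverse $Q \mapsto Q \circ A^{-1}$. Next I would recall the transfer formula: for any nonnegative measurable $g : \R^p \to \R$, one has $\int_{\R^p} g \, d\mu_{\#A} = \int_{\R^p} g \circ A \, d\mu$, which follows directly from the definition $\mu_{\#A}(K) = \mu(A^{-1}(K))$ by the usual approximation by simple functions (or a layer-cake argument).

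Combining these, fix $x \in \R^p$. By Proposition~\ref{prop_Chf_function},
$$\Lambda_{\mu_{\#A},d}(Ax) = \min\left\{\int_{\R^p} Q^2 \, d\mu_{\#A} : Q \in \Pi_d^p,\ Q(Ax) = 1\right\}.$$
For any feasible $Q$, set $P := Q \circ A \in \Pi_d^p$; then $P(x) = Q(Ax) = 1$, and by the transfer formula $\int_{\R^p} Q^2 \, d\mu_{\#A} = \int_{\R^p} (Q \circ A)^2 \, d\mu = \int_{\R^p} P^2 \, d\mu$. Conversely, any $P \in \Pi_d^p$ with $P(x) = 1$ gives $Q := P \circ A^{-1} \in \Pi_d^p$, feasible for the problem defining $\Lambda_{\mu_{\#A},d}(Ax)$, with the same objective value $\int_{\R^p} Q^2 \, d\mu_{\#A} = \int_{\R^p} P^2 \, d\mu$. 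Hence the two minimization problems have identical feasible objective values and therefore the same minimum, which is exactly $\Lambda_{\mu,d}(x) = \Lambda_{\mu_{\#A},d}(Ax)$.

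I do not expect a genuine obstacle here. The only points requiring a small amount of care are that an invertible affine change of variables neither raises the degree nor collapses the space $\Pi_d^p$ (handled cleanly by applying the degree bound to both $A$ and $A^{-1}$), and that the transfer formula is applied to the nonnegative integrand $Q^2$, so no integrability subtleties arise beyond those already guaranteed by compactness of the support in Assumption~\ref{aspt_statistical_setting}. An alternative route via the explicit kernel formula $\kappa_{\mu,d}(z,z) = v_d(z)^T M_{\mu,d}^{-1} v_d(z)$ in Proposition~\ref{prop_CD_kernel_moment_matrix} is possible but requires tracking the change-of-basis matrix induced by $A$, so the variational argument above is the more economical choice.
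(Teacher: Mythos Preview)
Your argument is correct: the variational characterization in Proposition~\ref{prop_Chf_function}, the transfer formula for the push-forward, and the observation that $P\mapsto P\circ A$ is a degree-preserving linear bijection of $\Pi_d^p$ combine exactly as you describe to give the identity. The paper does not supply its own proof of this proposition; it simply cites \citet{pauwels2016sorting}, Lemma~1, so there is nothing to compare against beyond noting that your variational route is the standard one and matches what that reference does.
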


The next proposition expresses the monotonicity property of the Christoffel function. It is a direct consequence of Proposition \ref{prop_Chf_function}.
\begin{proposition} \label{prop_monotonicity_Ch_function}
	If $\nu$ is a Borel measure on $\R^p$, such that $\nu \le \mu$, in the sense that $\nu(K) \le  \mu(K)$ for all Borel sets $K$, then for all $x \in \R^p$, $$\Lambda_{\nu,d}(x) \le \Lambda_{\mu,d}(x).$$
\end{proposition}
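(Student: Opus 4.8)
The plan is to obtain the inequality directly from the variational description of the Christoffel function in Proposition \ref{prop_Chf_function}. First I would promote the hypothesis $\nu\le\mu$, which is stated only for Borel sets, to the inequality $\int_{\R^p} f\,d\nu \le \int_{\R^p} f\,d\mu$ valid for every nonnegative Borel function $f$; this is the standard fact obtained by writing $f$ as an increasing pointwise limit of nonnegative simple functions and applying monotone convergence, together with additivity of $\nu$ and $\mu$ on the relevant level sets. Note also that $\nu$ is finite (since $\nu(\R^p)\le\mu(\R^p)=1$) and has support contained in $S$ (since $\nu(S^c)\le\mu(S^c)=0$), so every polynomial integral against $\nu$ appearing below is finite.

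Then I would fix $d\in\N$ and $x\in\R^p$ and apply the previous step with $f=P^2$ for an arbitrary $P\in\Pi_d^p$ satisfying $P(x)=1$, which gives $\int_{\R^p} P^2\,d\nu \le \int_{\R^p} P^2\,d\mu$. Since this holds for every $P$ in the common feasible set $\{P\in\Pi_d^p: P(x)=1\}$, taking the infimum over that set on both sides preserves the inequality. By Proposition \ref{prop_Chf_function}, the infimum of $\int_{\R^p} P^2\,d\mu$ over this set is exactly $\Lambda_{\mu,d}(x)$, and likewise the infimum of $\int_{\R^p} P^2\,d\nu$ is $\Lambda_{\nu,d}(x)$; hence $\Lambda_{\nu,d}(x)\le\Lambda_{\mu,d}(x)$. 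Since $x$ is arbitrary, the conclusion follows.

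There is essentially no obstacle here: the proposition is a one-line consequence of Proposition \ref{prop_Chf_function} once the set-wise domination of measures has been translated into domination of integrals of nonnegative functions. The only points deserving a word of care are this translation and the mild bookkeeping ensuring that $\Lambda_{\nu,d}$ is meaningful, namely that $\nu$ is a finite measure supported in the compact set $S$, so that the integrals $\int_{\R^p} P^2\,d\nu$ are finite and the minimization in Proposition \ref{prop_Chf_function} makes sense for $\nu$ as well (in the degenerate case where the moment matrix $M_{\nu,d}$ is singular, the infimum is $0$ and the bound is trivial since $\Lambda_{\mu,d}\ge 0$).
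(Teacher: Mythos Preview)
Your proposal is correct and follows exactly the approach indicated in the paper, which states just before the proposition that it ``is a direct consequence of Proposition \ref{prop_Chf_function}'' without giving further details. Your write-up simply makes explicit the standard passage from set-wise domination to domination of integrals of nonnegative functions and the ensuing comparison of infima over the common feasible set.
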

\begin{remark}
All the previous definitions and results extend straightforwardly to the case when $\mu$ does not have unit mass (see Assumption \ref{aspt_statistical_setting}). This extension is a simple scaling. This is a very slight abuse of notations that we will some times do without mentioning it (for instance in Proposition \ref{prop_monotonicity_Ch_function}).
\end{remark}

\subsection{The empirical Christoffel function}

The Christoffel function associated to $\mu_n$ (see Assumption \ref{aspt_statistical_setting}), $\Lambda_{\mu_n, d}$ is called the \textit{empirical Christoffel function}. It is to be compared to the \textit{population Christoffel function} $\Lambda_{\mu, d}$. The convergence of the empirical Christoffel function towards its population counterpart as $n \rightarrow \infty$ and for a fixed $d$ has been shown in \citep{lasserre2019empirical}. This allows by a careful choice of  threshold $\gamma>0$ and degree $d \in \N$, to construct a sequence of polynomial sublevel sets
$$\{x \in \R^p: \Lambda_{\mu_n, d}(x) \ge \gamma\}$$
which estimate the support $S$. It is worth mentioning that the empirical Christoffel function $\Lambda_{\mu_n,d}$ can be computed using the inversion of a square matrix of size $s(d)$ thanks to Proposition \ref{prop_CD_kernel_moment_matrix}. 

\section{Main results} \label{section_main_results}
\subsection{Overview}
From now on, we consider the case where the probability measure $\mu$ has density $w$ with respect to Lebesgue measure.
Our main result is that for a large enough number of observations $n$, by choosing pertinently a  degree $d_n \in \N$ and a threshold $\gamma_n> 0$ for the empirical Christoffel function $\Lambda_{\mu_n,d_n}$, we obtain a sequence of polynomial sublevel sets
\begin{align}
    S_n := \{x \in \R^d: \Lambda_{\mu_n,d_n}(x) \ge \gamma_n \}
    \label{eq:defSn}
\end{align}
which approximates the support of $\mu$.
More explicitly, we show that under smoothness assumptions on $S$, $S_n$ is close to $S$ both in Hausdorff distance and Lebesgue measure of their symmetric difference. For any $\epsilon \in (0,1)$, we obtain an explicit convergence rate of order
\begin{equation} \label{eqn_convergence_rate}
n^{-\frac{1-\epsilon}{p+2r+2}},
\end{equation}
where $r$ measures the speed of decrease of the density of $\mu$, $w$, close to $\partial S$, see Assumption \ref{aspt_density_speed_to_zero}.

Those results are obtained from the following materials:

1. Properties of the population Christoffel function. We provide a lower bound on the Christoffel function $\Lambda_{\mu,d}$ in the interior of the support $S$ and an upper bound in the exterior of $S$. We also provide a bound on the supremum of the Christoffel-Darboux kernel $\kappa_{\mu, d}$ on $S$. Those results will be discussed in Appendices \ref{appen_bounds_Chf_function} and \ref{appen_sup_CD_kernel}.

2. Concentration results for the speed of convergence of the empirical Christoffel function $\Lambda_{\mu_n,d}$ to its population counterpart $\Lambda_{\mu,d}$. This part requires the above mentioned bound on the supremum of the Christoffel - Darboux kernel. Those results could be of independent interest and will be discussed in Subsection \ref{subsec_concentration_result} with all the proofs in Appendix \ref{appen_proof_concentration_result}.

3. We introduce a thresholding scheme using the empirical Christoffel function $\Lambda_{\mu_n,d_n}$ as in \eqref{eq:defSn} by a careful tuning of the degree $d$ and the threshold $\gamma$ in the limit of large sample size $n$. With this thresholding scheme, we prove the desired results described in \eqref{eqn_convergence_rate}. The details will be in Subsection \ref{subsec_main_results_support_estimation} with proofs postponed to Appendix \ref{appen_proofs_main_results_support_estimation}.

\subsection{Conditions on the support and the density} \label{subsec_conditions}
Throughout the text, we consider a probability measure $\mu$ which is supported on $S \subset \R^p$  and has density $w \ge 0$.
\subsubsection{Assumptions on the support $S$}
We first introduce the following definitions, notations and assumptions.
\begin{definition} \label{def_ball_roll_inside_outside}
	Consider a closed set $F \subset \R^p$ and a constant $R>0$. We say that a ball of radius $R$ rolls inside $F$ if for any $x \in F$, there exists a ball $B_x$ centered at $z_x$ of radius $R$ such that $x \in B_x \subset F$. If a ball of radius $R$ rolls inside $\overline{F^c}$, then we say that a ball of radius $R$ rolls outside $F$.
\end{definition}

\begin{definition} \label{def_tube_volume}
    Consider a closed set $F \subset \R^p$. Denote by $F^\epsilon$ the $\epsilon$-extension of $F$, defined as
    $$F^\epsilon = \{x \in \R^p: d(x,F) \le \epsilon\}.$$
    We also define the volume function
    \begin{eqnarray*}
	    V_F: \R_+ &\longrightarrow& \R_+\\
	    \quad \epsilon &\longmapsto& \lambda(F^\epsilon),
    \end{eqnarray*}
    where we recall that $\lambda(.)$ denotes the Lebesgue measure of a set.
\end{definition}
\begin{assumption} \label{aspt_on_S}
	$S \subset \R^p$ is a compact set which has non-empty interior and satisfies:
	\begin{enumerate}
	\item There exists $R > 0$ such that a ball of radius $R$ rolls inside and outside $S$.
	\item For small $\epsilon > 0$, $$V_{\partial S}(\epsilon) \le \epsilon \, C_S  + O(\epsilon^2),$$
	where $C_S > 0$ is a constant which only depends on $S$.
	\end{enumerate}
\end{assumption}
We will rely on Assumption \ref{aspt_on_S} for our results and proofs. The first part of this assumption is made relatively frequently in the support inference literature, see for instance \citet{cuevas2004boundary}.  This part is interpreted as meaning that the boundary of $S$ is smooth. In particular, this assumption prevents corners in the boundary of $S$. The case of sets $S$ with non-smooth boundaries is a future research topic of interest that is not addressed here for the sake of concision. The second part of Assumption \ref{aspt_on_S} will be needed when working with the Lebesgue measure of the symmetric difference.

Next, we provide a class of sets $S$ with some geometric properties, under which Assumption \ref{aspt_on_S} holds.

\begin{lemma} \label{lm_smooth_boundary}
	Let $S \subset \R^p$ be a compact set with non-empty interior such that $\partial S$ is a smooth embedded hypersurface of $\R^p$ (see e.g. \citet{lee2000introduction}, Chapter 5 for definition). Then $S$ satisfies Assumption \ref{aspt_on_S}.
\end{lemma}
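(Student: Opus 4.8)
The plan is to show that a compact set $S$ whose boundary $\partial S$ is a smooth embedded hypersurface satisfies both parts of Assumption~\ref{aspt_on_S}. The key geometric tool is the \emph{reach} of $\partial S$ (equivalently, the existence of a tubular neighborhood of $\partial S$ on which the nearest-point projection is well-defined and smooth). Since $\partial S$ is a compact smooth hypersurface, its reach $\rho := \operatorname{reach}(\partial S)$ is strictly positive; this is a classical fact (Federer), and it is exactly the statement that there is $\rho > 0$ such that every point at distance less than $\rho$ from $\partial S$ has a unique nearest point on $\partial S$.

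\textbf{Part 1 (rolling ball).} I would take $R \in (0,\rho)$. Fix $x \in S$. If $d(x,\partial S) \ge R$, then the closed ball $\overline{B}_R(x)$ is contained in $S$ and we are done with $z_x = x$. If $d(x,\partial S) < R$, let $y$ be the (unique) nearest point of $\partial S$ to $x$, and let $n(y)$ be the inward unit normal to $\partial S$ at $y$; set $z_x = y + R\, n(y)$. Because $R < \rho$, the segment from $y$ in the inward normal direction of length up to $\rho$ stays inside $S$ and the ball $\overline{B}_R(z_x)$ touches $\partial S$ only at $y$ and otherwise lies in $\operatorname{Int} S$ — this is the standard ``ball of radius $<\operatorname{reach}$ rolls freely'' argument, which follows because the distance function to $\partial S$ is $C^1$ with $1$-Lipschitz gradient on the tubular neighborhood, so sublevel/superlevel sets of it are the relevant unions of balls. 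One checks $x \in \overline{B}_R(z_x)$: indeed $x$ lies on the inward normal ray from $y$ (again by the reach condition, $x = y + d(x,\partial S)\, n(y)$), and $d(x,\partial S) < R$ gives $d(x,z_x) = |R - d(x,\partial S)| < R$. Applying the same argument to $\overline{S^c}$, whose boundary is also $\partial S$ and whose reach at $\partial S$ is the same $\rho$, yields a ball of radius $R$ rolling outside $S$. Hence part~1 of Assumption~\ref{aspt_on_S} holds with this $R$.

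\textbf{Part 2 (tube volume estimate).} Here I would use the tubular neighborhood diffeomorphism $\Phi:\partial S \times (-\rho,\rho) \to \R^p$, $\Phi(y,t) = y + t\, n(y)$, whose Jacobian is a polynomial in $t$ of degree $p-1$ with coefficients built from the principal curvatures of $\partial S$ (the Weyl tube formula / Steiner formula for hypersurfaces): $\det D\Phi(y,t) = \prod_{i=1}^{p-1}(1 - t\,\kappa_i(y))$. For $\epsilon < \rho$ we have $(\partial S)^\epsilon = \Phi(\partial S \times [-\epsilon,\epsilon])$, so by the change of variables formula
\begin{align*}
V_{\partial S}(\epsilon) = \lambda\big((\partial S)^\epsilon\big) = \int_{\partial S} \int_{-\epsilon}^{\epsilon} \prod_{i=1}^{p-1}\big(1 - t\,\kappa_i(y)\big)\, dt\, d\mathcal{H}^{p-1}(y).
\end{align*}
Expanding the integrand in powers of $t$, the constant term integrates to $2\epsilon$ over $t$, the linear-in-$t$ term integrates to $0$ by symmetry of $[-\epsilon,\epsilon]$, and all remaining terms are $O(\epsilon^3)$ uniformly in $y$ (the curvatures are bounded since $\partial S$ is compact and smooth). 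Integrating over $\partial S$, which has finite $(p-1)$-dimensional measure, gives $V_{\partial S}(\epsilon) = 2\,\mathcal{H}^{p-1}(\partial S)\,\epsilon + O(\epsilon^2)$ (in fact $O(\epsilon^3)$), so part~2 holds with $C_S = 2\,\mathcal{H}^{p-1}(\partial S)$.

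\textbf{Main obstacle.} The analytically delicate point is establishing the positive reach of $\partial S$ and the resulting tubular neighborhood structure rigorously, and then making the ``rolling ball'' verification airtight — in particular arguing that for $x$ near $\partial S$ the nearest point is unique and $x$ genuinely lies on the normal ray, and that the ball $\overline{B}_R(z_x)$ does not poke out through some other part of $\partial S$. All of this is standard differential geometry (Federer's tube theory, the implicit function theorem applied to the normal exponential map, compactness of $\partial S$), so I would invoke it with a precise reference (e.g. \citet{federer1959curvature} for positive reach of smooth submanifolds, or the tubular neighborhood theorem in \citet{lee2000introduction}) rather than reproving it; the Steiner/Weyl tube formula likewise handles Part~2 cleanly once the diffeomorphism is in place. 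The bookkeeping in Part~2 (uniform bounds on curvature, finiteness of $\mathcal{H}^{p-1}(\partial S)$) is routine given compactness.
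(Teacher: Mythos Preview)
Your proposal is correct and follows essentially the same route as the paper: both proofs invoke the tubular neighborhood theorem (equivalently, positive reach) for compact smooth hypersurfaces to verify the rolling-ball condition via the nearest-point projection and inward normal, and both appeal to the Weyl/Steiner tube formula to obtain $V_{\partial S}(\epsilon) = 2\,\mathrm{Vol}(\partial S)\,\epsilon + O(\epsilon^3)$ with $C_S = 2\,\mathrm{Vol}(\partial S)$. The only cosmetic differences are that the paper phrases everything in tubular-neighborhood language from \citet{lee2000introduction} and gives a slightly more detailed case analysis for Part~1 (including an explicit argument that $z_x \in S$), while citing Weyl's formula as a black box for Part~2, whereas you sketch the Jacobian computation there explicitly.
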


The proof of this lemma requires the tubular neighborhood theorem from differentiable geometry (for the first part of Assumption \ref{aspt_on_S}) and Weyl's tube formula (for the second one). The details of the proof are presented in Appendix \ref{appen_proof_smooth_boundary_lemma}. Smoothness of support boundary assumption was considered by \cite{biau2008exact} to analyse the Devroye and Wise estimator.
\subsubsection{Assumption on the density $w$}
Now, for $\delta > 0$, we set
$$L(\delta) := \inf\{w(x): x \in S, d(x, \partial S) \ge \delta\}.$$

The next assumption concerns the rate of decay of the density of $\mu$ at the boundary of the support $S$.

\begin{assumption} \label{aspt_density_speed_to_zero}
	The density $w: S \to \R$ is such that for all $\delta \ge 0$, we have
	$$L(\delta) \ge C \delta^{r},$$
	where $C > 0$ and $r   \ge 0$ are fixed constants (depending only on $\mu$).		
\end{assumption}

\subsection{Main results for support estimation} \label{subsec_main_results_support_estimation}

First, we design our thresholding scheme using the empirical Christoffel function $\Lambda_{\mu_n,d}$. This thresholding scheme depends on the constants $R, C, r$ given by the assumptions on $\mu$ (Assumptions \ref{aspt_on_S} and \ref{aspt_density_speed_to_zero}). It also depends on a constant $\epsilon \in (0,1)$ which can be made arbitrarily small (a smaller $\epsilon$ leads to a better rate of convergence, but possibly worse constants), and on a constant $\alpha \in (0,1)$ which is in principle a small risk threshold, such that our results hold with probability $1 - \alpha$.

Given $0 < \epsilon <1$ and $0 < \alpha < 1$, we define
\begin{equation} \label{eqn_thresholding_scheme_with_d_depending_on_n}
\left\{\begin{aligned}
d_n &:= \left\lfloor \left(\dfrac{C R^{p+r}}{4C_{p,r,\alpha}} n\right)^{\frac{1}{p+2r+2}} \right\rfloor, \\
\gamma_n &:= 12 \left(\dfrac{3p(2-\epsilon)+3(1-\epsilon)r}{2 \epsilon e}\right)^{\frac{p(2-\epsilon) + (1-\epsilon)r}{\epsilon}} \dfrac{1}{d_n^{p(2-\epsilon)+(1-\epsilon)r}}, \\
S_n &:=\{x \in \R^p: \Lambda_{\mu_n, d_n}(x) \ge \gamma_n\},
\end{aligned}\right.
\end{equation}
where 
\begin{eqnarray*}
	C_{p,r, \alpha} &=&  \dfrac{4^{r+2}}{3} \bigg[2^{p+1} c_r \left(\dfrac{e}{p+2r+1}\right)^{p+2r+1} \exp((p+2r+1)^2)\\
	&\,& + \dfrac{4^{p}(p+2)(p+3)(p+8)}{24 \omega_p} \left(\dfrac{e}{p}\right)^p \exp (p^2) \bigg] \left(p+p(1-\log p)+p^2 -\log \alpha\right),
\end{eqnarray*}
$R$ is given in Assumption \ref{aspt_on_S}, $C$, $r$  are given in Assumption \ref{aspt_density_speed_to_zero} and $c_r$, $\omega_p$ are defined in \eqref{eqn_def_cr} and \eqref{eqn_def_omegap} respectively.

The explicit results for this thresholding scheme will be presented in the next subsections. First, we set $$n_0 := \dfrac{4 (D_{p,S,w, \epsilon}+1)^{p+2r+2} C_{p,r,\alpha}}{C R^{p+r}},$$ where

	\begin{eqnarray*}
		D_{p,S, w, \epsilon} &:=& 
		\max\bigg(2, \, \left(\dfrac{\diam(S)}{R}+1\right)^{\frac{1}{1-\epsilon}}, \left(\dfrac{2}{R} E_{p,r,\epsilon}\left(1, \dfrac{1}{2}\right)\right)^{\frac{1}{1-\epsilon}} \bigg),
	\end{eqnarray*}
	with the notation
	\begin{align*}
	&E_{p,r,\epsilon}\left(d, \beta\right) := \\
	& \left(\dfrac{(1+\beta)(p+2)(p+3)(p+8)}{3C(1-\beta) \omega_p}\right)^{\frac{1}{p+r}} \left(\dfrac{3p(2-\epsilon)+3(1-\epsilon)r}{2 \epsilon e}\right)^{\frac{p(2-\epsilon) + (1-\epsilon)r}{\epsilon(p+r)}} \left(\dfrac{e^{1+\frac{p}{d}}}{p}\right)^{\frac{p}{p+r}},
	\end{align*}
	for all $d \in \mathbb{N^*}$ and $\beta \in [0,1[$. Note that $E_{p,r,\epsilon}(d,\beta)$ is a bounded and decreasing function of $d$.
	
We also set $$n_1 := \dfrac{2^{p+2r+4} C_{p,r,\alpha}}{C R^{p+r}},$$
where $C, r$ are given in Assumption \ref{aspt_density_speed_to_zero}, $R$ is given in Assumption \ref{aspt_on_S} and $\omega_p$ is defined in \eqref{eqn_def_omegap}.

\subsubsection{Result for the Hausdorff distance between two sets and two boundaries} \label{subsubsec_Hausdorff_distance}
Recall the definition of the Hausdorff distance between two subsets $A,B$ of $\R^p$:
$$d_H(A,B) = \max\left(\sup\limits_{x\in A} d(x, B), \sup\limits_{y \in B} d(y, A)\right).$$

The following result provides an explicit quantitative rate of convergence for the estimation of $S$ using the thresholding scheme \eqref{eqn_thresholding_scheme_with_d_depending_on_n} based on the empirical Christoffel function. More explicitly, this estimation of $S$ by $S_n$ is measured by the Hausdorff distance between them and between their boundaries. Thus, this theorem is one of the most important results of this paper.
\begin{theorem} \label{thr_support_inference_empirical_Chf_function_Hausdorff_distance}
	Let $S \subset \R^p$ satisfy Assumption \ref{aspt_on_S} - part 1 with radius $R>0$ and $w: S \longrightarrow \R$ satisfy Assumption \ref{aspt_density_speed_to_zero} with two constants $C > 0$ and $r \ge 0$. Let $\mu$ be the measure supported on $S$ with density $w$ with respect to Lebesgue measure. 
	Then, for $n \ge n_0$, the thresholding scheme \eqref{eqn_thresholding_scheme_with_d_depending_on_n} satisfies with probability at least $1-\alpha$ that
	$$d_H(S,S_n) \le \delta_n$$
	and $$d_H(\partial S, \partial S_n) \le \delta_n,$$
	where 
	\begin{eqnarray*}
		\delta_n &:=& \max\bigg(\dfrac{\diam(S)}{d_n^{1-\epsilon}-1}, \dfrac{2}{d_n^{1-\epsilon}} \left(\dfrac{(p+2)(p+3)(p+8)}{C \omega_p}\right)^{\frac{1}{p+r}}\\
		&\,& \times \left(\dfrac{3p(2-\epsilon)+3(1-\epsilon)r}{2 \epsilon e}\right)^{\frac{p(2-\epsilon) + (1-\epsilon)r}{\epsilon(p+r)}} \left(\dfrac{e^{1+\frac{p}{d_n}}}{p}\right)^{\frac{p}{p+r}}\bigg)\\
		&=& O\left(n^{-\frac{1-\epsilon}{p+2r+2}}\right).
	\end{eqnarray*}
\end{theorem}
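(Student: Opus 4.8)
The plan is to show that, on a single high‑probability event, the estimator $S_n$ is squeezed between an inner erosion and an outer dilation of $S$, and then to upgrade this into Hausdorff control of the two boundaries. Write $B_n$ for the second term inside the maximum defining $\delta_n$, so that $B_n\le\delta_n$ and $B_n\asymp d_n^{-(1-\epsilon)}$ up to constants. I would prove the two inclusions
\[
\{x\in S:\ d(x,\partial S)\ge B_n\}\ \subseteq\ S_n\ \subseteq\ \{y\in\R^p:\ d(y,S)\le\delta_n\},
\]
each holding with probability at least $1-\alpha$. The left one yields $\sup_{x\in S}d(x,S_n)\le B_n\le\delta_n$: if $d(x,\partial S)<B_n$, Assumption~\ref{aspt_on_S}, part 1, produces a ball $B_R(z_x)\subseteq S$ with $x\in\overline{B}_R(z_x)$, and since $n\ge n_0$ forces $B_n<R$ and $d_n^{1-\epsilon}>1$, the concentric ball $B_{R-B_n}(z_x)$ lies in $\{x\in S:d(x,\partial S)\ge B_n\}\subseteq S_n$ and meets $\overline{B}_{B_n}(x)$. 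The right inclusion gives $\sup_{y\in S_n}d(y,S)\le\delta_n$ directly. Hence $d_H(S,S_n)\le\delta_n$; and a standard argument — using that a ball of radius $R$ rolls both inside and outside $S$, and that $\delta_n\le R$ for $n\ge n_0$, so that near any $x\in\partial S$ one can place along the normal a point of $S_n$ at inward distance $\le B_n$ and a point outside $S_n$ at outward distance just above $\delta_n$, while any point of $\partial S_n$ lies in $\{d(\cdot,S)\le\delta_n\}$ and hence within $\delta_n$ of $\partial S$ — upgrades this to $d_H(\partial S,\partial S_n)\le\delta_n$.

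For the left inclusion I would combine three ingredients. First, the concentration result of Subsection~\ref{subsec_concentration_result}, in the form: with probability at least $1-\alpha$, $(1-\eta_n)M_{\mu,d_n}\preceq M_{\mu_n,d_n}\preceq(1+\eta_n)M_{\mu,d_n}$ (so $M_{\mu_n,d_n}\succ 0$ by Remark~\ref{rk_psd_of_moment_matrix} and $S_n$ is well defined), which by inversion and evaluation of the quadratic form of Proposition~\ref{prop_CD_kernel_moment_matrix} at $v_{d_n}(x)$ is equivalent to $(1-\eta_n)\Lambda_{\mu,d_n}(x)\le\Lambda_{\mu_n,d_n}(x)\le(1+\eta_n)\Lambda_{\mu,d_n}(x)$ for all $x\in\R^p$; the point is that the choice of $d_n$, the polynomial bound $\sup_{x\in S}\kappa_{\mu,d_n}(x,x)=O(d_n^{p+2r+1})$ from Appendix~\ref{appen_sup_CD_kernel}, and $s(d_n)=O(d_n^{p})=o(n)$ keep $\eta_n$ below a fixed constant strictly less than $1$ for $n\ge n_0$ (this is what $C_{p,r,\alpha}$, carrying the $\log(1/\alpha)$ term and the kernel constants, is calibrated for). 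Second, the interior lower bound on the population Christoffel function from Appendix~\ref{appen_bounds_Chf_function}: for $d(x,\partial S)\ge B_n$ one has $B_{B_n}(x)\subseteq S$ and $w\ge L(B_n)\ge CB_n^{r}$ on that ball (Assumption~\ref{aspt_density_speed_to_zero}), so by monotonicity (Proposition~\ref{prop_monotonicity_Ch_function}), affine invariance (Proposition~\ref{prop_Chf_function_affine invariant}) and the scaling behaviour of the Christoffel function under a dilation of the measure,
\[
\Lambda_{\mu,d_n}(x)\ \ge\ CB_n^{r}\,\Lambda_{\lambda_{B_{B_n}(x)},\,d_n}(x)\ =\ CB_n^{p+r}\,\Lambda_{\lambda_B,\,d_n}(0),
\]
and $\Lambda_{\lambda_B,d_n}(0)$ is bounded below by an explicit constant times $d_n^{-p}$. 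Third, a direct verification: inserting the definitions of $B_n$ and $\gamma_n$ into $CB_n^{p+r}\cdot(\mathrm{const})\cdot d_n^{-p}$ shows it is $\ge\gamma_n/(1-\eta_n)$ — the exponents match since $(1-\epsilon)(p+r)+p=p(2-\epsilon)+(1-\epsilon)r$ — so $\Lambda_{\mu_n,d_n}(x)\ge(1-\eta_n)\Lambda_{\mu,d_n}(x)\ge\gamma_n$ and $x\in S_n$.

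For the right inclusion I would reuse the same concentration event together with the exterior upper bound on the population Christoffel function from Appendix~\ref{appen_bounds_Chf_function}. Since $\mu$ is supported on the compact set $S$, a univariate Chebyshev polynomial in $\|\cdot-y\|^2$ normalised to value $1$ at $y$, whose modulus on $S$ is at most of order $\exp(-c\,d_n\,\delta/\diam(S))$, gives for $y$ with $d(y,S)=\delta$ a bound on $\Lambda_{\mu,d_n}(y)$ decreasing with the product $d_n\,\delta/\diam(S)$ fast enough to drop below the merely polynomially small quantity $\gamma_n/(1+\eta_n)$ precisely once $d_n\,\delta/\diam(S)$ exceeds a constant multiple of $d_n^{\epsilon}$, i.e. once $\delta>\diam(S)/(d_n^{1-\epsilon}-1)$. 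For $y$ with $d(y,S)>\delta_n\ge\diam(S)/(d_n^{1-\epsilon}-1)$ this is the case (recall $d_n^{1-\epsilon}>1$ for $n\ge n_0$), whence $\Lambda_{\mu_n,d_n}(y)\le(1+\eta_n)\Lambda_{\mu,d_n}(y)<\gamma_n$ and $y\notin S_n$.

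The main obstacle is controlling $\eta_n$ and its interplay with the shrinking separation between the interior and exterior population bounds: as $d_n\to\infty$ the uniform fluctuation of $\Lambda_{\mu_n,d_n}$ around $\Lambda_{\mu,d_n}$ is governed by $\sup_S\kappa_{\mu,d_n}$, so one must first establish that this supremum is only polynomial in $d_n$ for the whole class of slowly decaying densities with smooth support boundary — the Bernstein--Markov-type estimate of Appendix~\ref{appen_sup_CD_kernel} — and then check that the resulting $\eta_n$ does not swamp the multiplicative gap, of order $CB_n^{p+r}d_n^{-p}\asymp\gamma_n$, between the inside lower bound and the outside upper bound. Balancing $s(d_n)$ and $\sup_S\kappa_{\mu,d_n}$ against $n$ is precisely what forces the exponent $1/(p+2r+2)$ in the definition of $d_n$, and propagating all constants through the two inclusions is what yields the explicit forms of $\gamma_n$, $C_{p,r,\alpha}$, and $n_0$.
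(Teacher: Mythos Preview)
Your proposal is correct and follows essentially the same route as the paper: the concentration result (Corollary~\ref{coroll_concentration_result}, via Lemma~\ref{lm_technical} and the kernel bound of Appendix~\ref{appen_sup_CD_kernel}) pins $\eta_n\le 1/2$ for the chosen $d_n$; the interior and exterior bounds on $\Lambda_{\mu,d}$ (Lemmas~\ref{lm_lower_bound_inside} and~\ref{lm_upper_bound_outside}) then yield the sandwich $S^2\subset S_n\subset S^1$ (Lemma~\ref{lm_bounds_of_support}); and the rolling-ball argument (Lemma~\ref{lm_Hausdorff_distance}) converts this into the two Hausdorff bounds once $\delta_n\le R$, which is what $n\ge n_0$ secures. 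One small slip to fix when you write it out: on $B_{B_n}(x)$ you do \emph{not} have $w\ge L(B_n)$, since points there may sit arbitrarily close to $\partial S$; the paper works instead on $\overline{B}_{B_n/2}(x)$, where $d(\cdot,\partial S)\ge B_n/2$ and hence $w\ge C(B_n/2)^r$, and the extra factor $2^{p+r}$ is already built into the leading $2$ in the definition of $B_n$.
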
 

By setting 
\begin{equation} \label{eqn_delta1}
    \delta_1(d) := \dfrac{\diam(S)}{d^{1-\epsilon}-1}= O\big(d^{-(1-\epsilon)}\big)
\end{equation}
and 
\begin{equation} \label{eqn_delta2}
    \delta_2(d, \beta) := \dfrac{2}{d^ {1-\epsilon}} E_{p,r,\epsilon}\left(d, \beta\right) = O\big(d^{-(1-\epsilon)}\big),
\end{equation}
we can rewrite $\delta_n$ as:
$$\delta_n = \max\left(\delta_1(d_n), \delta_2\left(d_n, \dfrac{1}{2}\right)\right).$$
These notations will help in the proof of Theorem \ref{thr_support_inference_empirical_Chf_function_Hausdorff_distance} and also in the statements of the next results below.

\subsubsection{Result for the Lebesgue measure of the symmetric difference between two sets} \label{subsubsec_symmetric_distance}
Recall the definition of the symmetric difference between two subsets $A, B$ of $\R^p$: $$A \triangle B = (A \setminus B) \cup (B \setminus A).$$

In this section, in order to measure the convergence of the estimator $S_n$ to the true set $S$, we will use the Lebesgue measure of their symmetric difference: $$(A, B) \longmapsto \lambda(A \triangle B).$$

The following result, which is a counterpart of Theorem \ref{thr_support_inference_empirical_Chf_function_Hausdorff_distance} for the Lebesgue measure of the symmetric difference, is the second main result of this paper.
\begin{theorem} \label{thr_support_inference_empirical_Chf_function_symmetric_distance}
	Let $S \subset \R^p$ satisfy Assumption \ref{aspt_on_S} with radius $R>0$ and $C_S > 0$, $w: S \longrightarrow \R$ satisfy Assumption \ref{aspt_density_speed_to_zero} with two constants $C > 0$ and $r \ge 0$. Let $\mu$ be the measure supported on $S$ with density $w$ with respect to Lebesgue measure. 
	Then, for $n \ge n_1$, the thresholding scheme \eqref{eqn_thresholding_scheme_with_d_depending_on_n} satisfies with probability at least $1-\alpha$ that
	$$\lambda(S \triangle S_n) \le 2 C_S \delta_n + O((\delta_n)^2),$$
	where $\delta_n = O\left(n^{-\frac{1-\epsilon}{p+2r+2}}\right)$ is defined after Theorem \ref{thr_support_inference_empirical_Chf_function_Hausdorff_distance}.
\end{theorem}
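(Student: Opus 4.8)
The plan is to reduce the bound on $\lambda(S \triangle S_n)$ to an estimate of the volume of a thin tube around $\partial S$, which is precisely the quantity controlled by part 2 of Assumption \ref{aspt_on_S}; the localization of $S \triangle S_n$ inside such a tube is essentially shared with the proof of Theorem \ref{thr_support_inference_empirical_Chf_function_Hausdorff_distance}, and the genuinely new ingredient here is the final volume bound.

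\emph{Step 1 (localizing the symmetric difference near $\partial S$).} The main step is to show that, for $n \ge n_1$, with probability at least $1-\alpha$ one has both $S \setminus S_n \subseteq (\partial S)^{\delta_n}$ and $S_n \setminus S \subseteq (\partial S)^{\delta_n}$. For the first inclusion it suffices to prove that every $x \in S$ with $d(x,\partial S) \ge \delta_n$ satisfies $\Lambda_{\mu_n,d_n}(x) \ge \gamma_n$, hence lies in $S_n$; then $S \setminus S_n \subseteq \{x \in S : d(x,\partial S) < \delta_n\} \subseteq (\partial S)^{\delta_n}$. This is obtained, by the same argument as in the proof of Theorem \ref{thr_support_inference_empirical_Chf_function_Hausdorff_distance} (which here only requires $n \ge n_1$, i.e. $d_n \ge 2$), by combining: (i) the lower bound on the population Christoffel function $\Lambda_{\mu,d_n}$ on the inner region $\{x \in S : d(x,\partial S) \ge \delta_n\}$ from Appendix \ref{appen_bounds_Chf_function}, which uses the rolling ball condition (Assumption \ref{aspt_on_S}, part 1) and the density lower bound (Assumption \ref{aspt_density_speed_to_zero}); (ii) the concentration inequality of Subsection \ref{subsec_concentration_result} controlling $\sup_{x}|\Lambda_{\mu_n,d_n}(x)-\Lambda_{\mu,d_n}(x)|$, itself resting on the bound on $\sup_{x\in S}\kappa_{\mu,d_n}(x,x)$ of Appendix \ref{appen_sup_CD_kernel}; and (iii) the calibration of $d_n$ and $\gamma_n$ in \eqref{eqn_thresholding_scheme_with_d_depending_on_n}, chosen precisely so that the population lower bound minus the concentration error still exceeds $\gamma_n$ on the event of probability $1-\alpha$. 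The second inclusion is proved symmetrically: every $x \notin S$ with $d(x,S) \ge \delta_n$ satisfies $\Lambda_{\mu_n,d_n}(x) < \gamma_n$, using the upper bound on $\Lambda_{\mu,d_n}$ outside $S$ (Appendix \ref{appen_bounds_Chf_function}), the same concentration estimate and the same calibration (now arranged so the population upper bound plus the concentration error stays below $\gamma_n$); one then invokes the elementary fact that, for $x \notin S$ and $S$ closed, the point of $S$ nearest to $x$ lies on $\partial S$, so that $d(x,S) = d(x,\partial S)$ and hence $S_n \setminus S \subseteq \{x \notin S : d(x,\partial S) < \delta_n\} \subseteq (\partial S)^{\delta_n}$. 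This step is the main obstacle, being where the Christoffel function estimates, the concentration bound and the parameter tuning all come together.

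\emph{Step 2 (from the tube inclusion to the volume estimate).} Since $S \setminus S_n$ and $S_n \setminus S$ are both contained in $(\partial S)^{\delta_n}$ and $V_{\partial S}(\epsilon) = \lambda\bigl((\partial S)^{\epsilon}\bigr)$ by definition, we obtain $\lambda(S \triangle S_n) = \lambda(S \setminus S_n) + \lambda(S_n \setminus S) \le 2\,V_{\partial S}(\delta_n)$. Because $\delta_n = O\bigl(n^{-(1-\epsilon)/(p+2r+2)}\bigr) \to 0$ (from the definitions of $d_n$ and $\delta_n$), for $n$ large $\delta_n$ is small enough for part 2 of Assumption \ref{aspt_on_S} to apply, giving $V_{\partial S}(\delta_n) \le \delta_n C_S + O(\delta_n^2)$. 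Combining the two estimates yields $\lambda(S \triangle S_n) \le 2 C_S \delta_n + O(\delta_n^2)$, and recalling $\delta_n = O\bigl(n^{-(1-\epsilon)/(p+2r+2)}\bigr)$ completes the proof.
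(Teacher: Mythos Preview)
Your proposal is correct and follows essentially the same route as the paper: establish the concentration event $|\Lambda_{\mu,d_n}-\Lambda_{\mu_n,d_n}|\le \tfrac{1}{2}\Lambda_{\mu,d_n}$ (valid since $n\ge n_1$ forces $d_n\ge 2$), deduce the sandwich $S^2\subset S_n\subset S^1$ via Lemma~\ref{lm_bounds_of_support}, and then bound $\lambda(S\triangle S_n)\le\lambda(S\setminus S^2)+\lambda(S^1\setminus S)\le V_{\partial S}(\delta_n^2)+V_{\partial S}(\delta_n^1)$ using Assumption~\ref{aspt_on_S}, part~2. The only cosmetic difference is that the paper keeps the two tube radii $\delta_1(d_n),\,\delta_2(d_n,\tfrac12)$ separate before majorizing by $\delta_n=\max(\delta_1,\delta_2)$, whereas you work with $\delta_n$ throughout; and one small attribution slip: the interior lower bound on $\Lambda_{\mu,d_n}$ (Lemma~\ref{lm_lower_bound_inside}) does not itself use the rolling ball condition---that hypothesis enters through the bound on $\sup_{x\in S}\kappa_{\mu,d_n}(x,x)$ underpinning the concentration estimate.
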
 
\begin{remark}
	The order of magnitude of the error for the thresholding scheme \eqref{eqn_thresholding_scheme_with_d_depending_on_n} is $n^{-\frac{1-\epsilon}{p+2r+2}}$ for both the Hausdorff distance between two sets and between their boundaries as well as the Lebesgue measure of their symmetric difference. Since $\epsilon \in (0,1)$ can be taken arbitrarily small, the rate of convergence is essentially $n^{-\frac{1}{p+2r+2}}$.
\end{remark}

\begin{remark}
The tuning of $d_n$ and $\gamma_n$ in \eqref{eqn_thresholding_scheme_with_d_depending_on_n} depends on the constants $C$ and $r$ from Assumption \ref{aspt_density_speed_to_zero} and on the constant $R$ from Assumption \ref{aspt_on_S}. 
In practice, these constants are typically unknown and we leave the question of selecting $d_n$ and $\gamma_n$ in a data-driven way, for instance by cross validation, open for future research.

On a theoretical level, the main aim of this paper is to show that it is possible to obtain rates of convergence, by selecting $d_n$ and $\gamma_n$ according to the constants $C$, $r$ and $R$. For the sake of concision, the situation where $C$, $r$ and $R$ are estimated from data is not studied in this paper. Let us nevertheless discuss it briefly here. First, we remark that if Assumptions \ref{aspt_density_speed_to_zero} and \ref{aspt_on_S} hold with constants $C$, $r$ and $R$, then they hold a fortiori with constants $C' < C$, $r' > r$ and $R' < R$. Hence, in order to obtain rates of convergence, it is sufficient to tune $d_n$ and $\gamma_n$ based on conservative values of $C$ and $R$ that are overly small and of $r$ that are overly large, such that Assumptions \ref{aspt_density_speed_to_zero} and \ref{aspt_on_S} hold. Obtaining conservative values is statistically easier than obtaining the sharpest possible values of $C$, $r$ and $R$ such that Assumptions \ref{aspt_density_speed_to_zero} and \ref{aspt_on_S} hold.
Another important question is adaptivity: obtaining a procedure based on the Christoffel function, with no knowledge of the values of $C$, $r$ and $R$ such that Assumptions \ref{aspt_density_speed_to_zero} and \ref{aspt_on_S} hold, and which yields the same rates of convergence as when knowing the sharpest values of $C$, $r$ and $R$ such that Assumptions \ref{aspt_density_speed_to_zero} and \ref{aspt_on_S} hold. 
\end{remark}

\subsubsection{Sketch of the main proofs} \label{subsubsec_connection}
First, we suppose that the estimation of the population Christoffel function by its empirical counterpart can be controlled. More explicitly, we assume that there exists a constant $\beta < 1$ such that for all $x \in \R^p$,
	\begin{eqnarray} \label{eqn_relation_mu_and_mu_n}
	|\Lambda_{\mu,d}(x) - \Lambda_{\mu_n,d}(x)| \le \Lambda_{\mu,d}(x) \beta,
	\end{eqnarray}
or equivalently
    \begin{eqnarray} \label{eqn_relation_mu_and_mu_n_another_expression}
	(1-\beta) \Lambda_{\mu,d}(x) \le \Lambda_{\mu_n,d}(x) \le (1+\beta) \Lambda_{\mu,d}(x).
	\end{eqnarray}
	
Now we introduce a sequence of polynomial sublevel sets which estimates the support $S$ using the empirical function $\Lambda_{\mu_n, d}$ where $d$ does not depend on $n$.

For $0 < \epsilon < 1$ fixed and for $d \in \N$, we define:
    \begin{equation} \label{eqn_thresholding_scheme_with_d_and_n}
    \left\{\begin{aligned}
    \gamma_d &:= 8(1+\beta) \left(\dfrac{3p(2-\epsilon)+3(1-\epsilon)r}{2 \epsilon e}\right)^{\frac{p(2-\epsilon) + (1-\epsilon)r}{\epsilon}} \dfrac{1}{d^{p(2-\epsilon)+(1-\epsilon)r}}, \\
    S_{d,n} &:= \{x \in \R^p: \Lambda_{\mu_n, d}(x) \ge \gamma_d\}.
\end{aligned}\right.
\end{equation}

The idea of this estimator comes from \citet{marx2019tractable}, Section 4.1. The difference is that we let $0<\epsilon<1$ arbitrarily small for a better rate of convergence (instead of setting $\epsilon = 1/2$ like in \citet{marx2019tractable}). Moreover, by choosing carefully the threshold, we obtain an estimator $S_{d,n}$ such that not only $S_{d,n}$ is contained in a small enlargement of $S$ (which has been shown in \citet{marx2019tractable}), but we also have a small enlargement of $S_{d,n}$ that contains $S$. The explicit result is as follows.

\begin{lemma} \label{lm_bounds_of_support}
	Let $S$ be a compact set with non-empty interior, $w: S \longrightarrow \R$ satisfy Assumption \ref{aspt_density_speed_to_zero} with two constants $C > 0$, $r \ge 0$ and $\mu$ be the measure supported on $S$ with density $w$.
	Assume that there exists a constant $\beta < 1$ for which \eqref{eqn_relation_mu_and_mu_n} holds.
	We define
	$$S^1 := \{x \in \R^p: d(x,S) \le \delta_1(d)\}$$
	and $$S^2 := \{x \in S: d(x, \partial S) \ge \delta_2(d, \beta)\},$$
	where $\delta_1(d)$, $\delta_2(d, \beta)$ are defined in \eqref{eqn_delta1} and \eqref{eqn_delta2} respectively.
	Then the thresholding scheme \eqref{eqn_thresholding_scheme_with_d_and_n} satisfies that $$S^2 \subset S_{d,n} \subset S^1.$$
\end{lemma}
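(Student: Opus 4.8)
The plan is to prove the two inclusions $S^2 \subset S_{d,n}$ and $S_{d,n} \subset S^1$ separately, in each case passing from the empirical Christoffel function to the population one via the two-sided bound \eqref{eqn_relation_mu_and_mu_n_another_expression}, and then invoking the population-level bounds on $\Lambda_{\mu,d}$ (a lower bound inside $S$, an upper bound outside $S$) that the excerpt announces will be proved in Appendices \ref{appen_bounds_Chf_function} and \ref{appen_sup_CD_kernel}. The threshold $\gamma_d$ in \eqref{eqn_thresholding_scheme_with_d_and_n} has been chosen precisely so that, after the factor $(1 \pm \beta)$ is absorbed, it sits strictly between the population lower bound valid on $S^2$ and the population upper bound valid outside $S^1$; the constant $8(1+\beta)$ and the exponent $p(2-\epsilon)+(1-\epsilon)r$ should match the shapes of those two population estimates.

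For the inclusion $S_{d,n} \subset S^1$, I would argue by contraposition: take $x \notin S^1$, i.e. $d(x,S) > \delta_1(d)$. The upper bound on the Christoffel function outside the support (the analogue in this paper of the outer bounds from \citet{lasserre2019empirical} and \citet{marx2019tractable}) should give $\Lambda_{\mu,d}(x) \le$ (something like) $4 \left(\tfrac{3p(2-\epsilon)+3(1-\epsilon)r}{2\epsilon e}\right)^{(p(2-\epsilon)+(1-\epsilon)r)/\epsilon} d^{-(p(2-\epsilon)+(1-\epsilon)r)}$, valid because the Euclidean distance from $x$ to $S$ exceeds $\diam(S)/(d^{1-\epsilon}-1)$; this is exactly the quantitative decay-outside-the-support estimate tuned so that a point that far away is "seen" as polynomial-small by degree-$d$ polynomials. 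Then $\Lambda_{\mu_n,d}(x) \le (1+\beta)\Lambda_{\mu,d}(x) < \gamma_d$, so $x \notin S_{d,n}$.

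For the inclusion $S^2 \subset S_{d,n}$, I would take $x \in S^2$, so $x \in S$ and $d(x,\partial S) \ge \delta_2(d,\beta)$. By the rolling-ball-free argument here one only needs that a ball of the appropriate radius around $x$ lies in $S$ and that the density is bounded below there; Assumption \ref{aspt_density_speed_to_zero} gives $w \ge C\,(d(x,\partial S))^r \ge C\,\delta_2(d,\beta)^r$ on a neighborhood of $x$, and the interior lower bound on $\Lambda_{\mu,d}$ (monotonicity, Proposition \ref{prop_monotonicity_Ch_function}, comparing $\mu$ to a scaled restriction of Lebesgue measure on a ball inside $S$, then affine invariance, Proposition \ref{prop_Chf_function_affine invariant}, to reduce to the unit ball, and a known lower bound for $\Lambda_{\nu_r,d}$ on the ball) should yield $\Lambda_{\mu,d}(x) \ge \tfrac{\gamma_d}{1-\beta}$ once $\delta_2(d,\beta)$ is chosen as in \eqref{eqn_delta2}. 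Then $\Lambda_{\mu_n,d}(x) \ge (1-\beta)\Lambda_{\mu,d}(x) \ge \gamma_d$, so $x \in S_{d,n}$.

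The main obstacle is the second inclusion: I expect the delicate part to be producing the explicit interior lower bound with the exact constants appearing in $\gamma_d$ and $\delta_2(d,\beta)$ — one must control how far from $\partial S$ a point must be so that a ball of radius comparable to that distance fits inside $S$ (here the absence of a rolling-ball hypothesis in Lemma \ref{lm_bounds_of_support} means one works with the largest inscribed ball at $x$, whose radius is essentially $d(x,\partial S)$), combine the density lower bound with the ball restriction, and then match the resulting degree-$d$ lower bound (of the form const $\times d^{-(p(2-\epsilon)+(1-\epsilon)r)}$ once $d(x,\partial S)$ scales like $d^{-(1-\epsilon)}$) against $\gamma_d/(1-\beta)$. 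The outside bound is comparatively routine given the decay estimates for the Christoffel function outside a compact set, and the passage from $\mu$ to $\mu_n$ is immediate from \eqref{eqn_relation_mu_and_mu_n_another_expression}. Throughout, I would keep $\epsilon \in (0,1)$ symbolic and only at the end note the orders $\delta_1(d),\delta_2(d,\beta) = O(d^{-(1-\epsilon)})$.
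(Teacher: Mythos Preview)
Your proposal is correct and follows essentially the same approach as the paper's proof: contraposition for $S_{d,n}\subset S^1$ via the exterior upper bound (Lemma~\ref{lm_upper_bound_outside}) combined with $\Lambda_{\mu_n,d}\le(1+\beta)\Lambda_{\mu,d}$, and a direct argument for $S^2\subset S_{d,n}$ via the interior lower bound (Lemma~\ref{lm_lower_bound_inside}) combined with $\Lambda_{\mu_n,d}\ge(1-\beta)\Lambda_{\mu,d}$. One small correction: the interior lower bound in the paper compares $\mu$ to the \emph{uniform} (Lebesgue) measure on the ball $\overline{B}_{\delta/2}(x)$ and invokes the known value of $\Lambda_{\mu_B,d}(0)$, not $\Lambda_{\nu_r,d}$; the Jacobi-type measure $\nu_r$ is used elsewhere (for the supremum of the Christoffel--Darboux kernel near $\partial S$), so your reference to $\nu_r$ here is a slight mislabeling, but the strategy you outline is otherwise exactly what the paper does.
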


This above relation between $S$ and $S_{d,n}$ is important since it implies that the difference between $S$ and $S_{d,n}$ is controlled by a decreasing sequence:
$$\delta_d = \max\big(\delta_1(d), \delta_2(d, \beta)\big) = O \big( d^{-(1-\epsilon)}\big).$$

By adding some assumptions on $S$, we can obtain results concerning the Hausdorff distances and the Lebesgue measure of the symmetric distance between $S$ and $S_{d,n}$.
\begin{enumerate}
    \item For the Hausdorff distances between two sets and between two boundaries, we need Assumption \ref{aspt_on_S} - part 1.
    \item For the Lebesgue measure of their symmetric difference, Assumption \ref{aspt_on_S} - part 2 is required.
\end{enumerate}

Now, under Assumption \ref{aspt_on_S} - part 1 and Assumption \ref{aspt_density_speed_to_zero} and thanks to the concentration results in Subsection \ref{subsec_concentration_result}, we can select $d_n$ such that \eqref{eqn_relation_mu_and_mu_n} holds with high probability with $\beta = 1/2$. Subsequently, we can select a threshold $\gamma_n$ that will optimize the convergence rate of $S_n$ to $S$. We obtain now the thresholding scheme \eqref{eqn_thresholding_scheme_with_d_depending_on_n} and all the results regarding the Hausdorff distances and the Lebesgue measure of the symmetric difference between $S$ and $S_n$ will follow.

All the proofs' details are postponed to Appendix \ref{appen_proofs_main_results_support_estimation} for the sake of clarity.

\subsection{A concentration result for the approximation of the Christoffel function by its empirical counterpart} \label{subsec_concentration_result}
Let $\mu$ be a measure which satisfies Assumption \ref{aspt_statistical_setting} and $\mu_n$ be the corresponding empirical measure. We consider now the speed of convergence of the empirical Christoffel function $\Lambda_{\mu_n,d}$ towards $\Lambda_{\mu,d}$. All the proofs of the following results will be postponed to Appendix \ref{appen_proof_concentration_result}.

First, we state below a technical lemma which bounds uniformly the quantity $ |\Lambda_{\mu_n,d} - \Lambda_{\mu,d}|/\Lambda_{\mu,d}$ by the operator norm of a moment-based random matrix.
\begin{lemma} \label{lm_technical}
	Let $v_d= \{P_j: 1 \le j \le s(d)\}$ be a basis  of orthonormal polynomials with respect to $\mu$. Denote by $M_{\mu_n,d}$ the moment matrix of $\mu_n$ with respect to the basis $v_d$ (see Subsection \ref{subsec_Christoffel_function}).
	Then for all $x \in \mathbb{R}^p$, we have
	$$\big|\Lambda_{\mu,d}(x)- \Lambda_{\mu_n,d}(x)\big| \le \Lambda_{\mu,d}(x) \, \|M_{\mu_n,d}-I_{s(d)}\|$$
	where we recall that the norm of $s(d) \times s(d)$ matrices is the operator norm.
\end{lemma}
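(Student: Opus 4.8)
The plan is to exploit the moment-matrix formula for the Christoffel--Darboux kernel (Proposition \ref{prop_CD_kernel_moment_matrix}) together with the fact that, in an orthonormal basis $v_d$ with respect to $\mu$, the population moment matrix is the identity: $M_{\mu,d} = I_{s(d)}$. Hence $\Lambda_{\mu,d}(x)^{-1} = \kappa_{\mu,d}(x,x) = \|v_d(x)\|_2^2$ and $\Lambda_{\mu_n,d}(x)^{-1} = \kappa_{\mu_n,d}(x,x) = v_d(x)^T M_{\mu_n,d}^{-1} v_d(x)$ (valid since $M_{\mu_n,d}$ is positive definite whenever it is invertible; one should note at the outset that if $M_{\mu_n,d}$ is singular the empirical Christoffel function is $0$ and the bound is vacuous, or alternatively argue on the event where $\|M_{\mu_n,d}-I_{s(d)}\|<1$, which forces invertibility). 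Writing $u = v_d(x)$, the quantity to control becomes
\begin{eqnarray*}
\big|\Lambda_{\mu,d}(x) - \Lambda_{\mu_n,d}(x)\big|
&=& \Lambda_{\mu,d}(x)\,\Lambda_{\mu_n,d}(x)\,\big|\kappa_{\mu_n,d}(x,x) - \kappa_{\mu,d}(x,x)\big| \\
&=& \Lambda_{\mu,d}(x)\,\Lambda_{\mu_n,d}(x)\,\big| u^T(M_{\mu_n,d}^{-1} - I_{s(d)})u\big|.
\end{eqnarray*}

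Next I would bound $\big| u^T(M_{\mu_n,d}^{-1} - I_{s(d)})u\big| \le \|M_{\mu_n,d}^{-1} - I_{s(d)}\|\,\|u\|_2^2 = \|M_{\mu_n,d}^{-1} - I_{s(d)}\|\,\kappa_{\mu,d}(x,x)$, so that
$$\big|\Lambda_{\mu,d}(x) - \Lambda_{\mu_n,d}(x)\big| \le \Lambda_{\mu_n,d}(x)\,\|M_{\mu_n,d}^{-1} - I_{s(d)}\|,$$
using $\Lambda_{\mu,d}(x)\,\kappa_{\mu,d}(x,x) = 1$. The remaining task is purely linear-algebraic: relate $\|M_{\mu_n,d}^{-1} - I_{s(d)}\|$ and $\Lambda_{\mu_n,d}(x)$ back to $\|M_{\mu_n,d} - I_{s(d)}\|$ and $\Lambda_{\mu,d}(x)$. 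Setting $t := \|M_{\mu_n,d} - I_{s(d)}\|$, the eigenvalues of $M_{\mu_n,d}$ lie in $[1-t, 1+t]$; when $t<1$ this gives $\|M_{\mu_n,d}^{-1}\| \le (1-t)^{-1}$ and, via $M_{\mu_n,d}^{-1} - I = M_{\mu_n,d}^{-1}(I - M_{\mu_n,d})$, the bound $\|M_{\mu_n,d}^{-1} - I_{s(d)}\| \le t/(1-t)$. Similarly $\Lambda_{\mu_n,d}(x) = (u^T M_{\mu_n,d}^{-1} u)^{-1} \le (\lambda_{\min}(M_{\mu_n,d}^{-1})\|u\|_2^2)^{-1} = \lambda_{\max}(M_{\mu_n,d})/\|u\|_2^2 \le (1+t)\Lambda_{\mu,d}(x)$. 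Combining, $|\Lambda_{\mu,d}(x) - \Lambda_{\mu_n,d}(x)| \le \Lambda_{\mu,d}(x)\,(1+t)t/(1-t)$, and one checks that $(1+t)t/(1-t) \le t$ is false in general — so I would instead aim directly for the cleaner route below.

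The cleaner route, which I expect to be the one used, is to avoid inverting: from $\Lambda_{\mu,d}(x)^{-1} = u^T u$ and $\Lambda_{\mu_n,d}(x)^{-1} = u^T M_{\mu_n,d}^{-1} u$ one does not get a clean bound, so instead compare reciprocals the other way. Actually the sharp statement follows by symmetry of roles: apply the same computation with $\mu$ and $\mu_n$ swapped. We have $|\kappa_{\mu_n,d}(x,x) - \kappa_{\mu,d}(x,x)| = |u^T(M_{\mu_n,d} - I_{s(d)})u| \le \|M_{\mu_n,d} - I_{s(d)}\|\,\|u\|_2^2$ directly, \emph{without} any inversion, since here $\kappa_{\mu,d}$ uses $M_{\mu,d}^{-1} = I$ and $\kappa_{\mu_n,d}$ can instead be expressed — no: $\kappa_{\mu_n,d}(x,x) = v_d(x)^T M_{\mu_n,d}^{-1} v_d(x)$ genuinely needs the inverse. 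So the honest path is: work on the event $t<1$ (on its complement the RHS is $\ge \Lambda_{\mu,d}(x)$ and the inequality holds trivially because $0 \le \Lambda_{\mu_n,d}(x) \le \Lambda_{\mu,d}(x) + \Lambda_{\mu,d}(x)\cdot 1$... this needs care). \textbf{The main obstacle} is precisely this bookkeeping at $t \ge 1$ and the constant-chasing that turns $(1+t)t/(1-t)$-type bounds into the clean factor $\Lambda_{\mu,d}(x)\,t$; I would handle it by first reducing, via the reproducing-kernel/min characterization in Proposition \ref{prop_Chf_function}, to showing the two-sided inequality $(1-t)\,u^T u \le u^T M_{\mu_n,d}^{-1} u$ is \emph{not} what is needed — rather one wants $u^T M_{\mu_n,d}^{-1} u \ge (1+t)^{-1} u^T u$, i.e. $\Lambda_{\mu_n,d}(x) \le (1+t)\Lambda_{\mu,d}(x)$, and separately $\Lambda_{\mu_n,d}(x) \ge (1-t)\Lambda_{\mu,d}(x)$ when $t<1$ (from $\lambda_{\min}(M_{\mu_n,d}^{-1}) = \lambda_{\max}(M_{\mu_n,d})^{-1} \ge (1+t)^{-1}$ giving the upper reciprocal bound, and $\lambda_{\max}(M_{\mu_n,d}^{-1}) \le (1-t)^{-1}$ giving the lower); these two together yield $|\Lambda_{\mu,d}(x) - \Lambda_{\mu_n,d}(x)| \le t\,\Lambda_{\mu,d}(x)$ exactly, and when $t \ge 1$ the inequality $|\Lambda_{\mu,d}(x)-\Lambda_{\mu_n,d}(x)| \le \Lambda_{\mu,d}(x) \le t\,\Lambda_{\mu,d}(x)$ holds provided $\Lambda_{\mu_n,d}(x) \le 2\Lambda_{\mu,d}(x)$, which may require a small separate argument or the convention $\Lambda_{\mu_n,d}=0$ on singular moment matrices. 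This last point is the only delicate part; everything else is the spectral bound $\lambda_{\min}(M_{\mu_n,d}), \lambda_{\max}(M_{\mu_n,d}) \in [1-t,1+t]$ applied through the Rayleigh quotient characterizations of $\Lambda_{\mu,d}(x)$ and $\Lambda_{\mu_n,d}(x)$.
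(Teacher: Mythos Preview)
Your final approach via Rayleigh quotients is correct, and the worry you flag about $t\ge 1$ is unfounded. Once $M_{\mu_n,d}$ is invertible (positive definite), the bound $\lambda_{\max}(M_{\mu_n,d})\le 1+t$ always holds, hence $\lambda_{\min}(M_{\mu_n,d}^{-1})\ge(1+t)^{-1}$ and $\Lambda_{\mu_n,d}(x)\le(1+t)\Lambda_{\mu,d}(x)$ for every $t\ge 0$. This already gives $\Lambda_{\mu_n,d}(x)-\Lambda_{\mu,d}(x)\le t\,\Lambda_{\mu,d}(x)$. For the other direction, if $t<1$ your lower spectral bound gives $\Lambda_{\mu,d}(x)-\Lambda_{\mu_n,d}(x)\le t\,\Lambda_{\mu,d}(x)$; if $t\ge 1$, simply use $\Lambda_{\mu_n,d}(x)\ge 0$ to get $\Lambda_{\mu,d}(x)-\Lambda_{\mu_n,d}(x)\le\Lambda_{\mu,d}(x)\le t\,\Lambda_{\mu,d}(x)$. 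So no extra hypothesis like $\Lambda_{\mu_n,d}(x)\le 2\Lambda_{\mu,d}(x)$ is needed.

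The paper takes a different and slicker route that avoids any case split. Starting as you do from
\[
\big|\Lambda_{\mu,d}(x)-\Lambda_{\mu_n,d}(x)\big|=\Lambda_{\mu,d}(x)\,\Lambda_{\mu_n,d}(x)\,\big|v_d(x)^T(I-M_{\mu_n,d}^{-1})v_d(x)\big|,
\]
it uses the factorization $I-M_{\mu_n,d}^{-1}=(M_{\mu_n,d}^{-1/2})^T(M_{\mu_n,d}-I)M_{\mu_n,d}^{-1/2}$, sets $z=M_{\mu_n,d}^{-1/2}v_d(x)$, and bounds $|z^T(M_{\mu_n,d}-I)z|\le\|M_{\mu_n,d}-I\|\,z^Tz$. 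Since $z^Tz=v_d(x)^TM_{\mu_n,d}^{-1}v_d(x)=\kappa_{\mu_n,d}(x,x)=\Lambda_{\mu_n,d}(x)^{-1}$, the factor $\Lambda_{\mu_n,d}(x)$ cancels exactly, yielding the claimed bound in one line. Your spectral argument is more elementary and arguably more transparent; the paper's factorization trick is shorter and does not require separating $t<1$ from $t\ge 1$. Both arguments tacitly assume $M_{\mu_n,d}$ is invertible, which you rightly note; in the applications this holds on the event $\|M_{\mu_n,d}-I\|<1$ where the lemma is used.
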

Note that $I_{s(d)}$ is actually the associated moment matrix of $\mu$ with respect to the basis $v_d$.
Now, to control the operator norm of the random matrix $M_{\mu_n,d} - I_{s(d)}$, we rely on Theorem 5.44 from \citet{vershynin2010introduction}. The following theorem makes use of this random matrix result and of Lemma \ref{lm_technical} to obtain an upper bound for the quantity $ |\Lambda_{\mu_n,d} - \Lambda_{\mu,d}|/\Lambda_{\mu,d}$ with high probability.

\begin{theorem} \label{thr_concentration_general}
	Let $\mu$ be a measure which satisfies Assumption \ref{aspt_statistical_setting} and $\mu_n$ be the corresponding empirical measure.
	Then for all $x \in \R^p$ and $\alpha > 0$, we have
	$$\big|\Lambda_{\mu,d}(x) - \Lambda_{\mu_n,d}(x)\big| \le \Lambda_{\mu,d}(x) \,\max \left( \sqrt{\dfrac{16m}{3n} \log \dfrac{s(d)}{\alpha}},\dfrac{16m}{3n} \log \dfrac{s(d)}{\alpha} \right) $$
	with probability at least $1 - \alpha$, where $$m = \sup\limits_{x \in \supp \mu}\kappa_{\mu,d}(x,x).$$
\end{theorem}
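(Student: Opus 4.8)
The plan is to combine Lemma \ref{lm_technical} with a matrix concentration inequality applied to the random matrix $M_{\mu_n,d}$. By Lemma \ref{lm_technical}, for every $x$ we have $|\Lambda_{\mu,d}(x)-\Lambda_{\mu_n,d}(x)| \le \Lambda_{\mu,d}(x)\,\|M_{\mu_n,d}-I_{s(d)}\|$, where $v_d=\{P_j\}_{j=1}^{s(d)}$ is an orthonormal basis of $\Pi_d^p$ with respect to $\mu$, and $I_{s(d)}$ is precisely $M_{\mu,d}$ in this basis. So the whole task reduces to bounding $\|M_{\mu_n,d}-I_{s(d)}\|$ with probability at least $1-\alpha$. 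First I would write $M_{\mu_n,d}=\frac1n\sum_{i=1}^n v_d(X_i)v_d(X_i)^T$, a sum of i.i.d. rank-one positive semidefinite matrices whose common expectation is $\mathbb{E}[v_d(X)v_d(X)^T]=M_{\mu,d}=I_{s(d)}$ by orthonormality.

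Next I would invoke Theorem 5.44 of \citet{vershynin2010introduction}, which is exactly designed for this situation: it controls $\|\frac1n\sum_i A_i - \mathbb{E}A_1\|$ for i.i.d. symmetric random matrices with an almost-sure bound $\|A_i\|\le m$. Here the key observation is that $\|v_d(X_i)v_d(X_i)^T\| = \|v_d(X_i)\|_2^2 = \sum_{j=1}^{s(d)}P_j(X_i)^2 = \kappa_{\mu,d}(X_i,X_i)$ by Proposition \ref{prop_CD_kernel_SOS}, and this is at most $m := \sup_{x\in\supp\mu}\kappa_{\mu,d}(x,x)$ since $X_i\in\supp\mu$ almost surely. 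Feeding $m$ and the dimension $s(d)$ into Vershynin's bound yields, with probability at least $1-\alpha$, an estimate of the form $\|M_{\mu_n,d}-I_{s(d)}\| \le \max\big(\sqrt{c\,\tfrac{m}{n}\log(s(d)/\alpha)},\, c\,\tfrac{m}{n}\log(s(d)/\alpha)\big)$; tracking the numerical constant from that theorem produces the factor $16/3$ claimed in the statement (the max appears because the deviation behaves like the subgaussian scale in the bulk regime and like the sub-exponential scale in the large-deviation regime). Combining this with Lemma \ref{lm_technical} gives the theorem.

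The main obstacle is essentially bookkeeping rather than conceptual: one must check that the normalization conventions and the explicit constant in the version of Theorem 5.44 used here match, so that the $16/3$ comes out correctly, and one must be careful that the almost-sure bound $m$ is finite (which it is, since $\supp\mu=S$ is compact and $\kappa_{\mu,d}(\cdot,\cdot)$ is a polynomial, hence continuous and bounded on $S$) and that the expectation identity $\mathbb{E}[v_d(X)v_d(X)^T]=I_{s(d)}$ is used in the correct (orthonormal) basis. A minor additional point is that Vershynin's statement is typically phrased with $\|A_i\|\le m$ and the conclusion involves $m/n$ up to a factor depending on $\|\mathbb{E}A_1\|=1$; since here $\|\mathbb{E}A_1\|=\|I_{s(d)}\|=1$, the bound simplifies as stated. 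Once these normalizations are pinned down, the proof is immediate.
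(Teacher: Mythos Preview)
Your proposal is correct and follows essentially the same approach as the paper: reduce via Lemma~\ref{lm_technical} to bounding $\|M_{\mu_n,d}-I_{s(d)}\|$, then apply Theorem~5.44 of \citet{vershynin2010introduction} using the almost-sure row bound $\|v_d(X_i)\|_2^2=\kappa_{\mu,d}(X_i,X_i)\le m$. The only cosmetic difference is that the paper phrases the setup in terms of the $n\times s(d)$ data matrix $A$ with rows $A_k=v_d(X_k)^T$ and $\tfrac1n A^TA=M_{\mu_n,d}$, whereas you write $M_{\mu_n,d}$ directly as an average of rank-one matrices; these are identical.
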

Note that in our case, the supremum of the Christoffel - Darboux kernel $m$ has a quantitative upper bound of order $O(d^{p+2r+1})$ which is of independent interest and will be provided in Appendix \ref{appen_sup_CD_kernel}. The following corollary is a consequence of Theorem \ref{thr_concentration_general} combined with Theorem \ref{thr_sup_CD_kernel}, and is useful in the tuning of $d_n$ for the thresholding scheme \eqref{eqn_thresholding_scheme_with_d_depending_on_n}.
\begin{corollary} \label{coroll_concentration_result}
	Let $S \subset \R^p$ satisfy Assumption \ref{aspt_on_S} - part 1 with radius $R>0$ and $w: S \longrightarrow \R$ satisfy Assumption \ref{aspt_density_speed_to_zero} with two constants $C>0$ and $r \ge 0$. Let $\mu$ be the measure supported on $S$ with density $w$ with respect to Lebesgue measure and $\mu_n$ be the corresponding empirical measure.
	Then for all $d \ge 2$, $x \in \R^p$ and $\alpha > 0$, we have
	$$|\Lambda_{\mu,d}(x) - \Lambda_{\mu_n,d}(x)| \le \Lambda_{\mu,d}(x)\, \max \left( \sqrt{\dfrac{16\, m(d,p,S,w)}{3n} \log \dfrac{s(d)}{\alpha}},\dfrac{16 \,m(d,p,S,w)}{3n} \log \dfrac{s(d)}{\alpha} \right) $$
	with probability at least $1 - \alpha$, where 	
	\begin{eqnarray*}
		m(d,p,S,w) &=& \dfrac{4^{p+r} s(d)}{C \omega_p R^{p+r}} \dfrac{(d+p+1)(d+p+2)(2d+p+6)}{(d+1)(d+2)(d+3)} \\
		&+& \dfrac{2^{p+2r} c_r}{C R^{p+r}} \left[ 2 \binom{p+d+2r+1}{d} - \binom{p+d+2r}{d} \right].
	\end{eqnarray*}
\end{corollary}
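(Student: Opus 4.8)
The plan is to obtain Corollary \ref{coroll_concentration_result} as an immediate specialization of Theorem \ref{thr_concentration_general}, the only additional ingredient being the explicit upper bound on the supremum of the Christoffel--Darboux kernel established in Appendix \ref{appen_sup_CD_kernel}. First I would check that Assumption \ref{aspt_statistical_setting} is in force: $S$ is compact with non-empty interior and $\mu$ is a Borel probability measure on $\R^p$, so Theorem \ref{thr_concentration_general} applies and gives, for every $x \in \R^p$ and every $\alpha > 0$, with probability at least $1-\alpha$,
\[
\big|\Lambda_{\mu,d}(x) - \Lambda_{\mu_n,d}(x)\big| \le \Lambda_{\mu,d}(x)\,\max\!\left(\sqrt{\dfrac{16m}{3n}\log\dfrac{s(d)}{\alpha}},\ \dfrac{16m}{3n}\log\dfrac{s(d)}{\alpha}\right),
\]
with $m = \sup_{x\in\supp\mu}\kappa_{\mu,d}(x,x)$.

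Second, I would replace $m$ by the closed-form quantity $m(d,p,S,w)$ appearing in the statement. Under Assumption \ref{aspt_on_S}-part 1 (a ball of radius $R$ rolls inside $S$) and Assumption \ref{aspt_density_speed_to_zero} (equivalently $w(x) \ge C\,d(x,\partial S)^{r}$ for $x\in S$), Theorem \ref{thr_sup_CD_kernel}, valid for $d \ge 2$, asserts precisely that $m \le m(d,p,S,w)$. I would simply cite that theorem rather than reprove it; its argument exploits the rolling-ball property to dominate $\mu$ from below, near any point of $S$, by a rescaled translate of the weighted measure $\nu_r$ on a ball of radius $R$, and then invokes the monotonicity of the Christoffel function (Proposition \ref{prop_monotonicity_Ch_function}) together with known pointwise estimates of the Christoffel--Darboux kernel of $\nu_r$ on the ball.

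Finally, I would conclude by monotonicity of the concentration bound in $m$. Setting $a := \dfrac{16}{3n}\log\dfrac{s(d)}{\alpha}$, we have $a \ge 0$ whenever $s(d) \ge \alpha$, which covers the relevant regime $\alpha < 1$ since $s(d)\ge 1$; then $t \mapsto \max(\sqrt{at},\,at)$ is nondecreasing and continuous on $\R_+$, so inserting $m \le m(d,p,S,w)$ into the bound of Theorem \ref{thr_concentration_general} can only enlarge its right-hand side, which yields the stated inequality with probability at least $1-\alpha$. The only substantive content lies in Theorem \ref{thr_sup_CD_kernel}; granting it, the corollary reduces to this short chain of steps, and the single point requiring (elementary) care is the monotonicity of the max-bound in $m$ just used.
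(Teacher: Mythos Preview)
Your proposal is correct and follows exactly the route the paper intends: the corollary is stated there as an immediate consequence of Theorem \ref{thr_concentration_general} combined with Theorem \ref{thr_sup_CD_kernel}, and the monotonicity of the bound in $m$ is the only (trivial) glue needed. Your parenthetical sketch of how Theorem \ref{thr_sup_CD_kernel} is proved is slightly imprecise (the paper splits into the two regimes $d(x,\partial S)\ge R/2$ and $d(x,\partial S)\le R/2$, handled respectively by Corollary \ref{coroll_sup_CD_kernel_inside} and Lemma \ref{lm_sup_CD_kernel_near_boundary}), but since you cite the theorem rather than reprove it, this does not affect the validity of your argument.
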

\begin{remark}
	When the dimension $p$ is fixed and $n$ is large, this uniform upper bound in high probability of $|\Lambda_{\mu, d}-\Lambda_{\mu_n, d}|/\Lambda_{\mu, d}$ is of order $\sqrt{d^{p+2r+1}/n}$, up to multiplicative $\log(d)$ factors.
\end{remark}

\section{Numerical illustration}
\label{sec:numerics}
\subsection{Simulated data in the plane}
We consider two synthetic datasets in the plane as depicted in Figure \ref{fig:syntheticData}. The densities considered are uniform on chosen sets with smooth boundary. According to \eqref{eqn_thresholding_scheme_with_d_depending_on_n}, the degree bound should be proportional to $n^{1/4}$. For each value of $n$, we implement the following procedure:
\begin{itemize}
    \item Choose $d_n = \lfloor 2 n^{1/4} \rfloor$.
    \item Evaluate the empirical Christoffel function at each input point.
    \item Choose the smallest value as a threshold and draw the corresponding level set.
\end{itemize}
The results are presented in Figure \ref{fig:syntheticData}. This illustrates the fact that the method is able to identify the support set as well as its boundary and topological features correctly for large enough sample sizes.
\begin{figure}
    \centering
    \includegraphics[width = \textwidth]{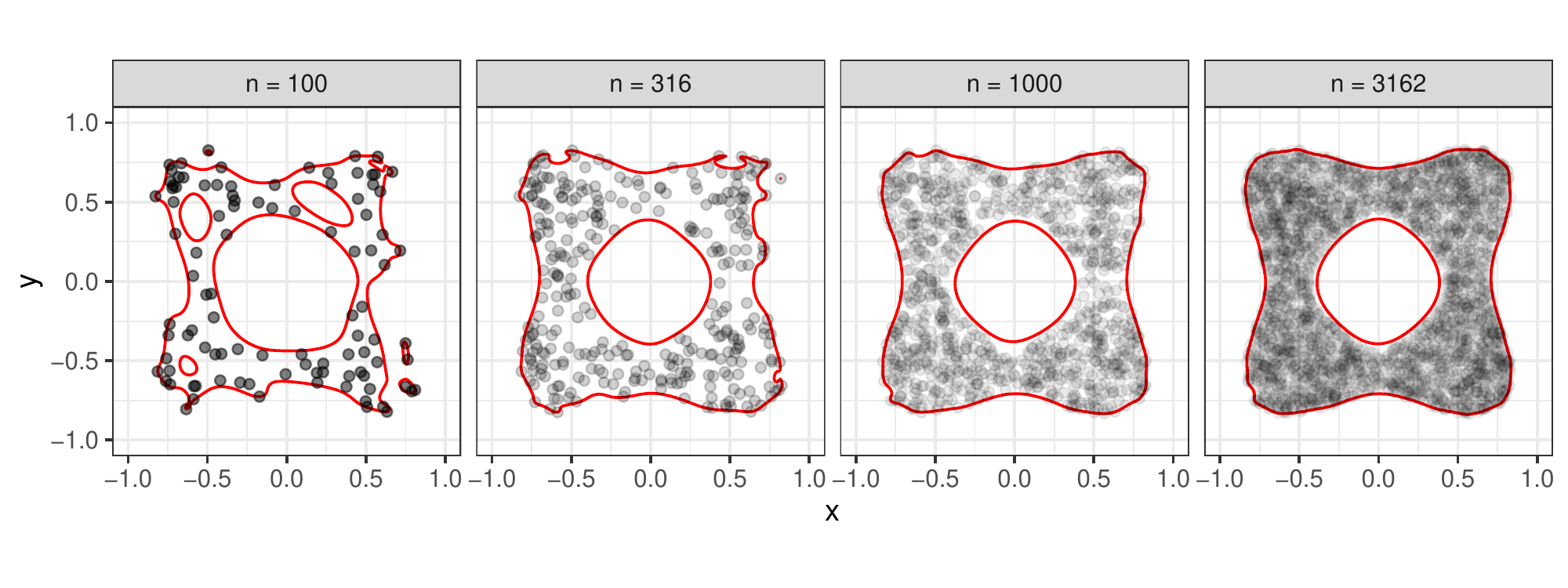}
    \includegraphics[width = \textwidth]{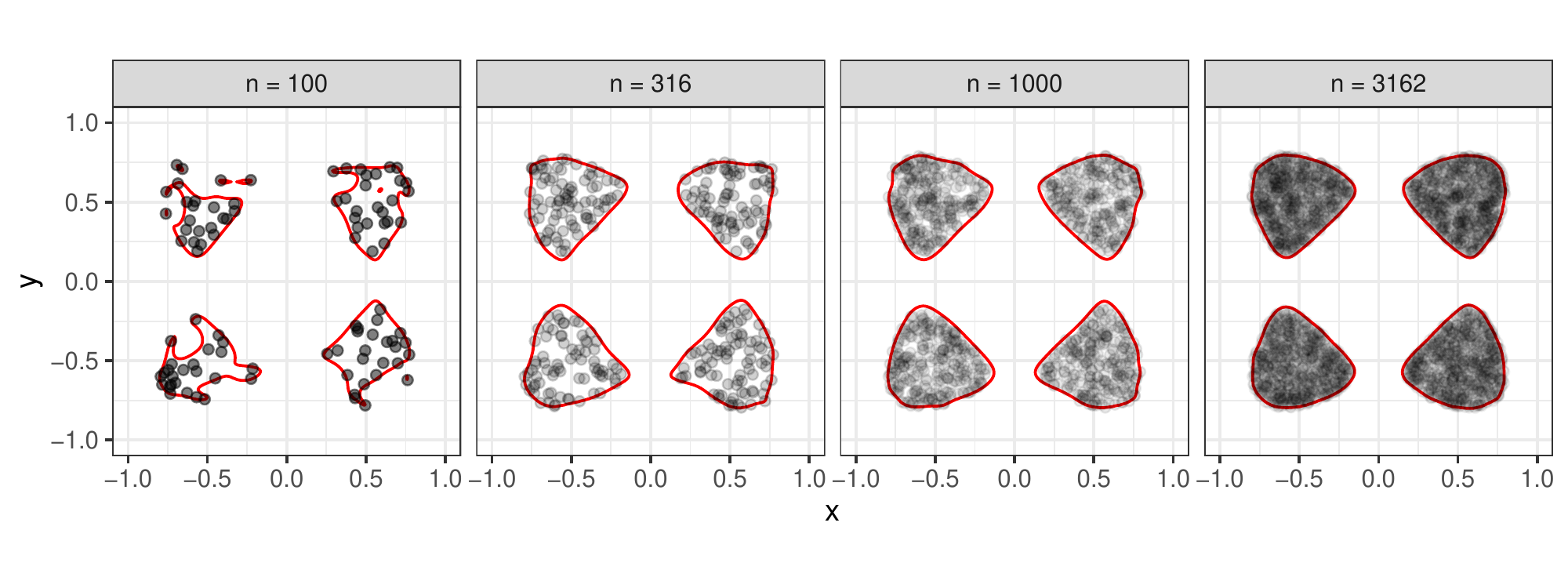}
    \caption{Experiment on synthetic data in the plane. The sample points are drawn uniformly on a chosen set with smooth boundary. We consider two such supports: one with a hole and on with four connected components. The red line displays the boundary of the support estimated from the sample. As we can see, for $n$ large enough, the method is able to identify the support set as well as its boundary and topological features correctly.}
    \label{fig:syntheticData}
\end{figure}

\subsection{Outlier detection}

We consider a thyroid disease dataset obtained from UCI repository \citep{Dua2017UCI}. This is a classification benchmark which contains 3772 examples with three classes, normal, hyperfunctioning and subnormal classes. The hyperfunctioning class contains 93 examples considered as outliers. Each example has 6 numerical descriptors so the effective dimension is $6$. This dataset was used in \cite{aggarwal2015theoretical,keller2012hics} to benchmark outlier detection methods.

We adopt the following procedure, for each concurent method.
\begin{itemize}
    \item Split the dataset randomly into a training set of normal examples and a test set with half malfunctioning cases and half normal examples.
    \item Estimate the support of the training set. This is done by computing a function for which a sublevel set represents the support, for example the Christoffel function or a kernel density estimate, and thresholding to a chosen value to obtain a set.
    \item On the test set predict outlyingness for half of the data for which the estimated function is most below the chosen threshold value on the support. Not that in this case, the threshold value is not important, only the order and rank of function value on the test set matters.
\end{itemize}
The results are displayed in Figure \ref{fig:thyroid}. We compare the Christoffel function with varying degree to kernel density estimators using Laplace or Gaussian kernels with various bandwidth. These results suggest that on this benchmark, the Christoffel function performs favorably and is more stable with respect to the choice of tuning parameter compared to kernel density estimators.

\begin{figure}
    \centering
    \includegraphics[width = \textwidth]{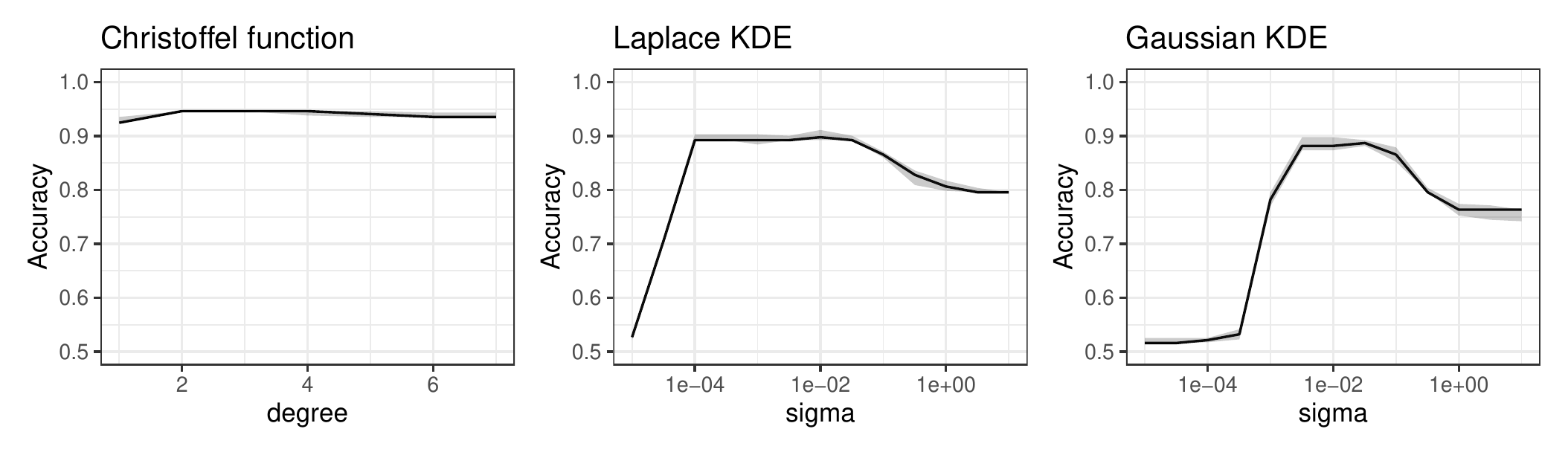}
    \caption{Experiment on the thyroid dataset. The dataset and experiments are described in the main text. We compare Christoffel function and kernel density estimation (KDE) to detect malfunctioning cases in a test set, based on a training set containing only normal cases. For each method, the middle line shows the median and the ribbons show quantiles for 10 random train test splits.}
    \label{fig:thyroid}
\end{figure}

\section{Conclusion} \label{section:conclusion}

We have provided a detailed quantitative finite sample analysis of
support estimation based on the empirical Christoffel function. We have
obtained a sample-size-dependent choice of the degree $d$, together with
errors bounds for the corresponding support inference procedure. An
interest of our results is that support inference based on
the empirical Christoffel function is computationally and conceptually
attractive, as we illustrate in Section \ref{sec:numerics}. These procedures
have recently been subject to active developments, but
there are only weak theoretical guarantees.

Our error rates are, generally speaking, comparable to convergence rates obtained by concurent support inference methods. 
Differences in rates relate to the fact that our
proofs are based on tools and developments from different
fields, in particular matrix concentration inequalities, non-parametric
statistics, geometry and orthonormal polynomials. Furthermore, our
setting is quite general, in terms of assumptions on the unknown support
and on the divergences between sets. In future work,
it would be interesting to see if our proofs could be refined to obtain
slightly sharper bounds in more specific settings. Alternatively,
it would be interesting to see if lower bounds can be provided
specifically for estimation procedures based on the empirical
Christoffel function, paving the way to a minimax theory for this approach.

Other problems of interest remain open. In particular, it would be
interesting to extend our results to the case of supports with
non-smooth boundaries. It would also be valuable to provide a
quantitative analysis of the case where the underlying measure is
supported on a manifold with smaller dimension than the ambient space.

\section*{Acknowledgements}
This work was mostly carried out during the first author's MS final project which was funded by the CIMI labex (Centre International de Math\'ematiques et d'Informatique de Toulouse). The third author acknowledges support of ANR-3IA Artificial and Natural Intelligence Toulouse Institute, ANR MASDOL and  Air Force Office of Scientific Research, Air Force Material Command, USAF, under grant number  FA9550-19-1-7026/19IOE033 and FA9550-18-1-0226. The authors would like to thank Jonas Kahn and S\'ebastien Gerchinovitz for motivating preliminary discussions on the topic. The authors are also grateful to Vincent Guedj, for his advices on Lemma \ref{lm_smooth_boundary} and its proof.

\bibliographystyle{apalike}
\bibliography{ref}
\pagebreak
\appendix
\section{Bounds on the Christoffel function} \label{appen_bounds_Chf_function}
The following results provide a lower bound on the Christoffel function $\Lambda_{\mu, d}$ inside the support $S$ and an upper bound outside $S$. These bounds are similar to those in Subsections 6.3.1 and 6.3.2 of \citet{lasserre2019empirical} and will be useful in the next proofs.

\subsection{Upper bound on the Christoffel function outside $S$}
In this section, we consider a probility measure $\mu$ which satisfies Assumption \ref{aspt_statistical_setting}.
Now, to exhibit an upper bound on the Christoffel function outside $S$, we first provide a refinement of the ``needle polynomial" which has been introduced in \citet{kroo2013christoffel1}. 
\begin{lemma} [see e.g. \cite{lasserre2019empirical}, Lemma 6.3] \label{lm_needle_polynomial}
	For any $d \in \N^*$ and $\delta \in (0,1)$, there exists a $p$-variate polynomial $Q$ of degree $2d$ such that $Q(0) = 1$, $|Q| \le 1$ on the unit ball $B$ and $|Q| \le 2^{1-\delta d}$ on $B \setminus \overline{B}_\delta(0)$. 
\end{lemma}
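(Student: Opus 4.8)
The plan is to build the needle polynomial in one variable first and then reduce the $p$-variate case to it by composing with a linear functional. For the univariate construction, I would start from the classical Chebyshev polynomials $T_k$ on $[-1,1]$, which satisfy $|T_k| \le 1$ on $[-1,1]$ and grow rapidly outside this interval. The idea is to rescale so that the point $0$ is mapped to a location just outside $[-1,1]$, where $T_k$ is already of size roughly $(1+c\delta^2)^{k}$, and then renormalize by dividing by the value at that point. Concretely, I would consider $q(t) := T_d\!\left(\frac{2t^2}{1+?}-1\right)$-type constructions, or more simply set, for $t \in [-1,1]$, a polynomial of degree $2d$ in $t$ that is $T_d$ evaluated at an affine image of $t^2$. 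The key quantitative facts I need are: (i) $T_d(1+x) \ge \frac{1}{2}e^{d\sqrt{2x}}$ for small $x>0$ (from $T_d(\cosh\theta)=\cosh(d\theta)$), giving a lower bound at the "needle tip"; and (ii) $|T_d| \le 1$ on $[-1,1]$, giving the uniform bound on the complement of a neighborhood of the tip. Dividing by the tip value turns the $O(1)$ bound on $[-1,1]$ into the claimed $2^{1-\delta d}$-type decay on $B \setminus \overline{B}_\delta(0)$, provided the affine parameters are chosen so that points at distance $\ge \delta$ from $0$ land in $[-1,1]$.

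The passage from one variable to $p$ variables is the standard trick: if $q$ is the univariate needle with $q(0)=1$, $|q|\le 1$ on $[-1,1]$ and $|q| \le 2^{1-\delta d}$ on $[-1,1]\setminus(-\delta,\delta)$, then for $x \in B \subset \R^p$ I would set $Q(x) := q(\langle x, e\rangle)$ for a fixed unit vector $e$ — but that only decays in one direction. To get decay on all of $B \setminus \overline{B}_\delta(0)$, the cleaner route is to work with the radial variable: since $\|x\|_2^2$ is a polynomial of degree $2$ in $x$, I would take $Q(x) := \tilde q(\|x\|_2^2)$ where $\tilde q$ is a univariate polynomial of degree $d$ with $\tilde q(0)=1$, $|\tilde q| \le 1$ on $[0,1]$, and $|\tilde q| \le 2^{1-\delta d}$ on $[\delta^2, 1]$. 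Then $Q$ has degree $2d$, $Q(0)=1$, $|Q|\le 1$ on $B$, and for $\|x\|_2 \in [\delta,1]$ we have $\|x\|_2^2 \in [\delta^2,1]$, so $|Q(x)| \le 2^{1-\delta d}$. So really everything reduces to producing the degree-$d$ univariate polynomial $\tilde q$ on $[0,1]$, which is where the Chebyshev estimate is used: map $[0,1]$ affinely into $[-1,1+\eta]$ with $0 \mapsto 1+\eta$ and $\delta^2 \mapsto$ something $\le 1$, choose $\eta \asymp \delta^2$, set $\tilde q(s) = T_d(\ell(s))/T_d(1+\eta)$, and check $T_d(1+\eta) \ge 2^{\delta d - 1}$ via $T_d(1+\eta)\ge \frac12(1+\sqrt{2\eta})^d \ge \frac12 e^{d\sqrt{2\eta}/2}$ and a suitable choice of the constant in $\eta \asymp \delta^2$.

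The main obstacle is getting the constants in the affine map $\ell$ and the parameter $\eta$ to line up so that the decay bound is exactly $2^{1-\delta d}$ rather than $2^{1-c\delta d}$ for some $c<1$, and so that $\ell$ maps $[\delta^2,1]$ into $[-1,1]$ while mapping $0$ to $1+\eta$ with $T_d(1+\eta)$ at least $2^{\delta d-1}$ — these pull $\eta$ in opposite directions ($\eta$ large helps the tip lower bound, $\eta$ small is forced by needing $\ell([\delta^2,1])\subseteq[-1,1]$, i.e. $\eta \lesssim \delta^2$). I would resolve this by being slightly generous: take $\eta$ a small constant multiple of $\delta^2$, use the inequality $\sqrt{2\eta}\,d \ge (\log 2)\,\delta d$ which holds once the multiple is chosen as $\ge (\log 2)^2/2$, and accept any harmless slack, possibly replacing $2^{1-\delta d}$ by a weaker-looking but equivalent bound and then noting it implies the stated one for the relevant range of $d,\delta$; alternatively I would simply cite \citet{kroo2013christoffel1} and \citet[Lemma 6.3]{lasserre2019empirical} for the precise constant, since the lemma is quoted verbatim from there and the refinement needed here is only in the explicit tracking of $2d$ as the degree, which the above radial construction delivers directly.
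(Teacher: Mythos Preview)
The paper does not give its own proof of this lemma: it is quoted verbatim from \cite{lasserre2019empirical}, Lemma 6.3, and used as a black box. So there is no ``paper's proof'' to compare against beyond the cited reference.

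Your plan is correct and is precisely the standard argument behind the cited lemma (going back to \cite{kroo2013christoffel1}): reduce to a radial univariate problem via $Q(x)=\tilde q(\|x\|_2^2)$, and build $\tilde q$ from a rescaled Chebyshev polynomial. Your worry about tuning $\eta$ to hit the exact constant $2^{1-\delta d}$ is unnecessary, though: the natural affine map is determined by sending $[\delta^2,1]$ onto $[-1,1]$, namely
\[
\ell(s)=\frac{1+\delta^2-2s}{1-\delta^2},\qquad \tilde q(s)=\frac{T_d(\ell(s))}{T_d(\ell(0))}.
\]
Then $\ell(0)=\frac{1+\delta^2}{1-\delta^2}$, and since $T_d(x)\ge \tfrac12\bigl(x+\sqrt{x^2-1}\bigr)^d$ for $x\ge 1$, a direct computation gives $x+\sqrt{x^2-1}=\frac{1+\delta}{1-\delta}$, hence
\[
T_d(\ell(0))\;\ge\;\tfrac12\Bigl(\tfrac{1+\delta}{1-\delta}\Bigr)^d\;\ge\;\tfrac12\,e^{2\delta d}\;\ge\;\tfrac12\,2^{\delta d},
\]
using the elementary inequality $\frac{1+\delta}{1-\delta}\ge e^{2\delta}$ on $(0,1)$. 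This yields $|\tilde q|\le 2^{1-\delta d}$ on $[\delta^2,1]$ and $|\tilde q|\le 1$ on $[0,1]$ with no slack to manage; the approximate bounds $T_d(1+\eta)\ge \tfrac12 e^{d\sqrt{2\eta}/2}$ you invoke are not needed and in fact make the bookkeeping harder.
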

\begin{lemma} \label{lm_upper_bound_outside}
	Let $\delta > 0$ and $x \notin S$ such that $d(x, S) \ge \delta$. Then, for any $d \in \N^*$ we have
	$$\Lambda_{\mu, d}(x) \le 2^{3- \frac{\delta d}{\delta+ \diam(S)}}.$$
\end{lemma}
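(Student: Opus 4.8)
The plan is to use the variational characterisation of the Christoffel function (Proposition~\ref{prop_Chf_function}): it suffices to exhibit, for a given $x$ with $d(x,S)\ge\delta$, an explicit polynomial $P\in\Pi_d^p$ with $P(x)=1$ and $\int P^2\,d\mu$ small, since then $\Lambda_{\mu,d}(x)\le\int P^2\,d\mu$. The polynomial $P$ will be an affinely rescaled copy of the needle polynomial of Lemma~\ref{lm_needle_polynomial}, tuned so that $x$ is mapped to the origin (where the needle equals $1$) and $S$ lands in the region where the needle is exponentially small.

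First I would dispose of the degenerate case $d=1$, where the needle polynomial of Lemma~\ref{lm_needle_polynomial} would have degree $0$. Here the trivial choice $P\equiv 1$ gives $\Lambda_{\mu,d}(x)\le\mu(\R^p)=1$, and since $\diam(S)>0$ one has $\delta/(\delta+\diam(S))<1$, so $2^{3-\delta/(\delta+\diam(S))}>4>1$ and the claimed bound holds a fortiori. So from now on assume $d\ge 2$.

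Next, since $S$ is compact, $\delta_0:=d(x,S)$ is attained and $\delta_0\ge\delta>0$; writing $\rho:=\delta_0+\diam(S)$, every $y\in S$ satisfies $\delta_0\le\|y-x\|\le\rho$ (the lower bound is the definition of $\delta_0$, the upper bound follows from the triangle inequality through the nearest point of $S$). Consider the invertible affine map $T(z):=(z-x)/\rho$; it sends $x$ to $0$ and $S$ into the shell $\{z:\delta'\le\|z\|\le 1\}$ with $\delta':=\delta_0/\rho\in(0,1)$. Apply Lemma~\ref{lm_needle_polynomial} with degree parameter $d':=\lfloor d/2\rfloor\ge 1$ and separation $\delta'$ to get a $p$-variate polynomial $Q$ of degree $2d'\le d$ with $Q(0)=1$, $|Q|\le 1$ on $B$, and $|Q|\le 2^{1-\delta'd'}$ on $B\setminus\overline{B}_{\delta'}(0)$; by continuity of $|Q|$ the latter bound persists on the closure of that set, hence holds at every point of $T(S)$. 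Setting $P:=Q\circ T\in\Pi_d^p$, we have $P(x)=Q(0)=1$ and
\[
\int_{\R^p}P^2\,d\mu=\int_S Q(T(y))^2\,w(y)\,dy\le 2^{2-2\delta'd'}\,\mu(S)=2^{2-2\delta'd'}.
\]
It then remains to bound the exponent: since $t\mapsto t/(t+\diam(S))$ is increasing, $\delta'\ge\delta/(\delta+\diam(S))$, and since $\lfloor d/2\rfloor\ge(d-1)/2$ we get $2\delta'd'\ge\frac{\delta}{\delta+\diam(S)}(d-1)\ge\frac{\delta d}{\delta+\diam(S)}-1$. Combining with Proposition~\ref{prop_Chf_function} gives $\Lambda_{\mu,d}(x)\le 2^{2-2\delta'd'}\le 2^{3-\delta d/(\delta+\diam(S))}$, as desired.

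There is no deep obstacle in this argument; it is essentially a scaling computation. The points that require care are the bookkeeping of the two separation parameters $\delta$ and $\delta_0$ — one must not throw away the improvement coming from $\delta_0$ also sitting in the denominator $\rho$, which is exactly what lets $\delta/(\delta+\diam(S))$ appear cleanly — the parity of $d$ forcing the floor $\lfloor d/2\rfloor$ and thereby the loss of one unit in the exponent (this is what turns $2^2$ into $2^3$ in the final bound), and the small-degree edge case treated separately above.
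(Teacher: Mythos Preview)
Your proof is correct and follows essentially the same route as the paper: rescale affinely so that $x$ maps to the origin and $S$ lands in the shell $B\setminus\overline{B}_{\delta'}(0)$, then plug the needle polynomial of Lemma~\ref{lm_needle_polynomial} into the variational formula (Proposition~\ref{prop_Chf_function}). The only organisational differences are that the paper invokes affine invariance of $\Lambda_{\mu,d}$ via the push-forward rather than composing the test polynomial directly, and splits into even/odd $d$ instead of using $\lfloor d/2\rfloor$; your separate treatment of $d=1$ is in fact slightly more careful than the paper's, and the appearance of $w(y)\,dy$ in your displayed integral is a harmless slip (the lemma only needs Assumption~\ref{aspt_statistical_setting}, so write $d\mu(y)$).
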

\begin{proof}
	First, we will prove Lemma \ref{lm_upper_bound_outside} with $x \notin S$ such that $d(x,S) = \delta$. In this case, $S \subset T:= \overline{B}_{\delta+ \diam (S)}(x) \setminus B_\delta(x)$. Indeed, for any $y \in S$, $d(x,y) \le d(x, S)+ \diam (S) = \delta + \diam(S)$. On the other hand, if $y \in B_\delta(x)$, then $d(x,S) \le d(x,y) < \delta$ which is a contradiction.\\
	Now, let $A$ be the affine transformation which maps $\overline{B}_{\delta+ \diam (S)}(x)$ to the unit ball $B$ and $\mu_{\#A}$ be the push-forward measure of $\mu$ by $A$. Then $\supp \mu_{\#A} = A(S) \subset A(T) = B \setminus B_{\delta'}(0)$ where $\delta' = \frac{\delta}{\delta+ \diam(S)}$ and by Proposition \ref{prop_Chf_function_affine invariant}, we have
	$$\Lambda_{\mu,d}(x) = \Lambda_{\mu_{\#A},d}(0).$$
	Next, we apply Lemma \ref{lm_needle_polynomial} to $k \in \N^*$ and $\delta' \in (0,1)$, we obtain a polynomial $Q$ of degree $2k$ such that $Q(0)=1$ and $|Q| \le 2^{1-\delta'k}$ on $B \setminus \overline{B}_{\delta'}(0)$, which implies that $|Q| \le 2^{1-\delta'k}$ on $\supp \mu_{\#A}$. Thus
	\begin{eqnarray*} 
		\Lambda_{\mu_{\# A},2k}(0) &=& \min \left\{ \int\limits_{\mathbb{R}^p} P^2 d\mu_{\# A}: P \in \Pi_{2k}^p, P(z)=1 \right\}\\
		&\le& \int\limits_{\mathbb{R}^p} Q^2 d\mu_{\#A} = \int\limits_{\supp \mu_{\#A}} Q^2 d\mu_{\#A}\\
		&\le& 2^{2(1-\delta'k)} \le 2^{3-\delta'2k}.
	\end{eqnarray*}
	Then we have for any $k \in \N^*$
	$$\Lambda_{\mu, 2k}(x) \le 2^{3- \delta'2k}.$$
	Now, the equivalent definition of the Christoffel function in Proposition \ref{prop_Chf_function} makes sure that
	$$\Lambda_{\mu, 2k+1}(x) \le \Lambda_{\mu_S, 2k}(x) \le 2^{2(1-\delta'k)} \le 2^ {2(1-\delta'k)+1-\delta'} \le 2^{3-\delta'(2k+1)}.$$
	By combining both cases $d=2k$ and $d=2k+1$, we have
	$$\Lambda_{\mu, d}(x) \le 2^{3-\delta'd}.$$
	Finally, since $2^{3-\delta'd} = 2^{3-\frac{\delta d}{\delta+\diam(S)}}$ is a decreasing function of $\delta$, we have for all $x$ such that $d(x,S) \ge \delta$,
	$$\Lambda_{\mu, d}(x) \le 2^{3- \frac{d(x,S) d}{d(x,S)+\diam(S)}} \le 2^{3-\frac{\delta d}{\delta+\diam(S)}}.$$
\end{proof}
\subsection{Lower bound on the Christoffel function inside $S$}
We now consider a compact set $S$ with non-empty interior, a density $w$ satisfying Assumption \ref{aspt_density_speed_to_zero} with two constants $C > 0$, $r \ge 0$ and the measure $\mu$ supported on $S$ with density $w$.
\begin{lemma} \label{lm_lower_bound_inside}
	Let $\delta > 0$ and $x \in S$ such that $d(x, \partial S) \ge \delta$. Then for any $d \ge 2$ we have
	$$\Lambda_{\mu, d}(x) \ge \dfrac{C \omega_p \, \delta^{p+r}}{2^{p+r}} \dfrac{1}{s(d)} \dfrac{(d+1)(d+2)(d+3)}{(d+p+1)(d+p+2)(2d+p+6)}.$$
\end{lemma}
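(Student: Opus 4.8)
The plan is to use the variational description of the Christoffel function (Proposition~\ref{prop_Chf_function}), and to reduce the desired lower bound — by monotonicity (Proposition~\ref{prop_monotonicity_Ch_function}) and affine invariance (Proposition~\ref{prop_Chf_function_affine invariant}) — to a single explicit reference quantity, namely the Christoffel function of Lebesgue measure on the unit ball evaluated at its centre, which can then be computed in closed form.

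First I would set up the reduction. Since $x\in S$, $S$ is closed and $d(x,\partial S)\ge\delta$, the ball $\overline B_{\delta/2}(x)$ is contained in $S$, and every $z\in\overline B_{\delta/2}(x)$ satisfies $d(z,\partial S)\ge d(x,\partial S)-\|z-x\|\ge\delta/2$, so Assumption~\ref{aspt_density_speed_to_zero} gives $w(z)\ge L(\delta/2)\ge C(\delta/2)^{r}$. Hence the measure $\nu$ with density $C(\delta/2)^{r}\mathbf{1}_{\overline B_{\delta/2}(x)}$ satisfies $\nu\le\mu$, and Proposition~\ref{prop_monotonicity_Ch_function} yields $\Lambda_{\mu,d}(x)\ge\Lambda_{\nu,d}(x)$. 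The affine map $A:y\mapsto\frac2\delta(y-x)$ carries $\overline B_{\delta/2}(x)$ onto the unit ball $B$ and multiplies Lebesgue measure by $(2/\delta)^{p}$, so $\nu_{\#A}=C(\delta/2)^{p+r}\lambda_B$; by Proposition~\ref{prop_Chf_function_affine invariant} and the scaling remark following Proposition~\ref{prop_monotonicity_Ch_function},
$$\Lambda_{\mu,d}(x)\ \ge\ \Lambda_{\nu,d}(x)\ =\ \Lambda_{\nu_{\#A},d}(0)\ =\ \frac{C\,\delta^{p+r}}{2^{p+r}}\,\Lambda_{\lambda_B,d}(0).$$
It therefore suffices to prove $\Lambda_{\lambda_B,d}(0)=\kappa_{\lambda_B,d}(0,0)^{-1}\ge\frac{\omega_p}{s(d)}\,\frac{(d+1)(d+2)(d+3)}{(d+p+1)(d+p+2)(2d+p+6)}$, i.e.\ to upper bound $\kappa_{\lambda_B,d}(0,0)$.

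Next I would compute $\kappa_{\lambda_B,d}(0,0)$ explicitly. Since $\lambda_B$ is rotation invariant, $\Pi_d^p$ has an orthonormal basis made of products (radial polynomial in $\|x\|^{2}$)$\times$(spherical harmonic), and every basis vector with a non-constant harmonic factor vanishes at the origin; so by Proposition~\ref{prop_CD_kernel_SOS}, $\kappa_{\lambda_B,d}(0,0)=\sum_{m=0}^{\lfloor d/2\rfloor}q_m(0)^{2}$, where $\{q_m\}$, with $\deg q_m=m$, is orthonormal for the push-forward of $\lambda_B$ under $x\mapsto\|x\|^{2}$ — a weight proportional to $u^{p/2-1}$ on $[0,1]$. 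The endpoint value at $u=0$ of the Christoffel--Darboux kernel of this Jacobi-type weight is classical; it follows from the endpoint values and $L^{2}$-norms of the corresponding Jacobi polynomials, or equivalently from the telescoping identity $\sum_{k=0}^{m}(2k+a)\binom{k+a-1}{k}^{2}=a\binom{m+a}{m}^{2}$ (valid for all $a>0$, as $(2k+a)\binom{k+a-1}{k}^{2}=a\binom{k+a}{k}^{2}-a\binom{k+a-1}{k-1}^{2}$), and gives
$$\kappa_{\lambda_B,d}(0,0)=c_0\binom{\lfloor d/2\rfloor+p/2}{\lfloor d/2\rfloor}^{2},\qquad c_0:=\frac{\Gamma(p/2+1)}{\pi^{p/2}}=\frac{1}{\lambda(B)}.$$

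Finally, the claim reduces to the inequality
$$c_0\,\omega_p\binom{\lfloor d/2\rfloor+p/2}{\lfloor d/2\rfloor}^{2}\ \le\ s(d)\,\frac{(d+p+1)(d+p+2)(2d+p+6)}{(d+1)(d+2)(d+3)}\qquad(d\ge 2),$$
where $c_0\,\omega_p$ collapses to a simple constant through the definition~\eqref{eqn_def_omegap}, and the Legendre duplication formula relates $\binom{(d+p)/2}{d/2}^{2}$ to $s(d)=\binom{d+p}{p}$. The hard part is precisely this last step: the residual discrepancy — which is exactly what the three rational correction factors encode — must be controlled by an elementary estimate on a ratio of Gamma functions, handling even and odd $d\ge2$ separately and using $\lfloor d/2\rfloor\le d/2$ together with the monotonicity of $x\mapsto\binom{x+p/2}{x}$; getting the precise cubic factors and the constant $\omega_p$ right requires care, chiefly because of the floor function and the half-integer parameter $p/2$. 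All the structural steps above — the monotonicity and affine reduction, the spherical-harmonic decomposition, and the Jacobi endpoint evaluation — are routine.
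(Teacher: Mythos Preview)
Your reduction to the unit ball --- bounding $w$ below by $C(\delta/2)^r$ on $\overline B_{\delta/2}(x)$ and then using monotonicity, scaling and affine invariance to arrive at $\Lambda_{\mu,d}(x)\ge C(\delta/2)^{p+r}\Lambda_{\lambda_B,d}(0)$ --- is exactly the paper's argument. The paths diverge only at the last step: the paper invokes Lemma~6.1 of \citet{lasserre2019empirical} as a black box for the required lower bound on the Christoffel function of the ball at its centre, whereas you try to reprove that bound via the exact value $\kappa_{\lambda_B,d}(0,0)=\lambda(B)^{-1}\binom{\lfloor d/2\rfloor+p/2}{\lfloor d/2\rfloor}^{2}$. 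Your spherical-harmonic reduction and the telescoping Jacobi identity are both correct, and this closed form checks out against direct computation.

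The genuine gap is the final inequality, which you leave as a sketch and which in fact cannot be closed in dimension one. With the paper's definition $\omega_p=2\pi^{(p+1)/2}/\Gamma(p/2+1)$ one has $c_0\,\omega_p=2\sqrt\pi$, and the inequality you must verify,
\[
2\sqrt\pi\,\binom{\lfloor d/2\rfloor+\tfrac p2}{\lfloor d/2\rfloor}^{2}\ \le\ s(d)\,\frac{(d+p+1)(d+p+2)(2d+p+6)}{(d+1)(d+2)(d+3)},
\]
is \emph{false} for $p=1$ once $d$ is moderately large: at $d=20$ the left side is $2\sqrt\pi\,\binom{10.5}{10}^{2}\approx 48.5$ while the right side equals $47$. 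By Legendre duplication the asymptotic ratio of left to right is $\Gamma\!\big(\tfrac{p+1}{2}\big)/\Gamma\!\big(\tfrac p2+1\big)$, which exceeds $1$ precisely when $p=1$. So for $p=1$ no amount of care with floors or half-integer parameters will close your argument; this points to a normalization mismatch between \eqref{eqn_def_omegap} and the constant implicit in the cited external lemma, which the paper's black-box citation does not expose. For $p\ge 2$ the asymptotic ratio is below $1$ and your program is salvageable in principle, but the finite-$d$ verification you gesture at is still not carried out, and in any case is not the route the paper takes.
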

\begin{proof}
	First, we will prove that the closed ball $\overline{B}_{\delta/2}(x) \subset \{x \in S: d(x, \partial S) \ge \delta/2\} \subset S$. Indeed, if $z \in \overline{B}_{\delta/2}(x)$, i.e. $\mathrm{dist}(x,z) \le \delta/2$, then $$d(z, \partial S) \ge d(x, \partial S) - d(x,z) \ge \delta - \delta/2 = \delta/2.$$
	We have
	\begin{eqnarray*}
		\Lambda_{\mu,d}(x) &=& \min \left\{ \int\limits_{\R^p} P^2(z) d\mu(z): P \in \Pi_d^p, P(x) = 1 \right\} \\
		&=& \min \left\{ \int\limits_{S} P^2(z)w(z) dz: P \in \Pi_d^p, P(x) = 1 \right\} \\
		&\ge& \min \left\{ \int\limits_{\overline{B}_{\delta/2}(x)} P^2(z) d\mu(z): P \in \Pi_d^p, P(x) = 1 \right\} \\
		&\ge& L \left(\dfrac{\delta}{2}\right) \min \left\{ \int\limits_{\overline{B}_{\delta/2}(x)} P^2(z) dz: P \in \Pi_d^p, P(x) = 1 \right\} \\
		&\ge& C \left(\dfrac{\delta}{2}\right)^r \min \left\{ \int\limits_{\R^p} P^2(z) d\lambda_{\overline{B}_{\delta/2}(x)}(z): P \in \Pi_d^p, P(x) = 1 \right\} \\
		&=& C \left(\dfrac{\delta}{2}\right)^r \Lambda_{\lambda_{\overline{B}_{\delta/2}(x)},d}(x),
	\end{eqnarray*}
	where the third inequality comes from Assumption \ref{aspt_density_speed_to_zero}.
	Now we have
	\begin{eqnarray*}
		\Lambda_{\lambda_{\overline{B}_{\delta/2}(x)},d}(x) &=& \lambda\left(\overline{B}_{\delta/2}(x)\right) \Lambda_{\mu_{\overline{B}_{\delta/2}(x)},d}(x) = \lambda\left(\overline{B}_{\delta/2}(x)\right) \Lambda_{\mu_B,d}(0)\\
		&=& \dfrac{\lambda\left(\overline{B}_{\delta/2}(x)\right)}{\lambda(B)} \Lambda_{\mu_B,d}(0) = \left(\dfrac{\delta}{2}\right)^p \Lambda_{\mu_B,d}(0)\\
		&\ge& \left(\dfrac{\delta}{2}\right)^p \dfrac{\omega_p}{s(d)} \dfrac{(d+1)(d+2)(d+3)}{(d+p+1)(d+p+2)(2d+p+6)},
	\end{eqnarray*}
	where the first and third equality come from the monotonicity of the Christoffel function, the second equality comes from its affine invariance and the last inequality is Lemma 6.1 in \citet{lasserre2019empirical}, which is obtained when $d \ge 2$. Then, by combining the above arguments, we have the lower bound result.
\end{proof}

\section{Supremum of the Christoffel - Darboux kernel on $S$} 
\label{appen_sup_CD_kernel}
In this section, we consider a set $S$ satisfying Assumption \ref{aspt_on_S} - part 1 with radius $R>0$ and a density $w$ satisfying Assumption \ref{aspt_density_speed_to_zero} with two constants $C>0$, $r \ge 0$. Let $\mu$ be the measure supported on $S$ with density $w$. First, we have the following upper bound of the Christoffel - Darboux kernel $\kappa_{\mu,d}$ inside the support $S$, which is a direct consequence of Lemma \ref{lm_lower_bound_inside}.
\begin{corollary} \label{coroll_sup_CD_kernel_inside}
	Let us consider $x \in S$ such that $d(x, \partial S) \ge R/2$. Then
	$$\kappa_{\mu,d}(x,x) \le \dfrac{4^{p+r} s(d)}{C \omega_p R^{p+r}} \dfrac{(d+p+1)(d+p+2)(2d+p+6)}{(d+1)(d+2)(d+3)}.$$
\end{corollary}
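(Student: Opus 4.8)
The plan is to derive this directly from Lemma \ref{lm_lower_bound_inside} by the defining relation between the Christoffel function and the Christoffel--Darboux kernel. First, I recall from Definition \ref{def_Chf_function} that $\Lambda_{\mu,d}(x) = 1/\kappa_{\mu,d}(x,x)$ for every $x \in \R^p$, and that $\kappa_{\mu,d}(x,x) > 0$ (as noted in the remark following Proposition \ref{prop_CD_kernel_moment_matrix}), so this reciprocal is well-defined. Hence an equivalent restatement of the claim is the lower bound
$$
\Lambda_{\mu,d}(x) \ge \dfrac{C \omega_p R^{p+r}}{4^{p+r} s(d)} \dfrac{(d+1)(d+2)(d+3)}{(d+p+1)(d+p+2)(2d+p+6)}.
$$

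Second, I apply Lemma \ref{lm_lower_bound_inside} with the specific choice $\delta = R/2$. The hypothesis of the corollary, namely $d(x, \partial S) \ge R/2$, is exactly the hypothesis $d(x,\partial S) \ge \delta$ required by that lemma with this value of $\delta$, and the hypothesis $d \ge 2$ is inherited. Lemma \ref{lm_lower_bound_inside} then gives
$$
\Lambda_{\mu,d}(x) \ge \dfrac{C \omega_p (R/2)^{p+r}}{2^{p+r}} \dfrac{1}{s(d)} \dfrac{(d+1)(d+2)(d+3)}{(d+p+1)(d+p+2)(2d+p+6)} = \dfrac{C \omega_p R^{p+r}}{4^{p+r} s(d)} \dfrac{(d+1)(d+2)(d+3)}{(d+p+1)(d+p+2)(2d+p+6)},
$$
since $(R/2)^{p+r}/2^{p+r} = R^{p+r}/4^{p+r}$. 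Taking reciprocals of both sides (both being positive) and using $\kappa_{\mu,d}(x,x) = 1/\Lambda_{\mu,d}(x)$ yields precisely the asserted inequality.

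There is essentially no obstacle here: the corollary is a bookkeeping consequence of the already-established Lemma \ref{lm_lower_bound_inside}, the only points to verify being the compatibility of hypotheses ($\delta = R/2$ is legitimate since $R > 0$ guarantees $\delta > 0$, and Assumption \ref{aspt_on_S}--part 1 together with Assumption \ref{aspt_density_speed_to_zero} are in force in this appendix section) and the elementary simplification of the constant. The content that does the real work — the needle-polynomial construction and the lower bound $\Lambda_{\mu_B,d}(0) \ge \frac{\omega_p}{s(d)}\frac{(d+1)(d+2)(d+3)}{(d+p+1)(d+p+2)(2d+p+6)}$ quoted from \citet{lasserre2019empirical} — was all absorbed into Lemma \ref{lm_lower_bound_inside}, so no new analytic estimates are needed at this step.
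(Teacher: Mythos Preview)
Your proof is correct and matches the paper's own argument exactly: apply Lemma~\ref{lm_lower_bound_inside} with $\delta = R/2$ and invert using $\kappa_{\mu,d}(x,x) = 1/\Lambda_{\mu,d}(x)$.
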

\begin{proof}
	We apply Lemma \ref{lm_lower_bound_inside} with $\delta = R/2$ and we use the fact that $\kappa_{\mu,d}(x,x) = 1/\Lambda_{\mu, d}(x)$.
\end{proof}
Next, for the points which stay near the boundary of $S$, we will rely on Theorem 3.1 from \citet{xu1999summability} which provides an explicit formula for the Christoffel - Darboux kernel associated to a measure with Jacobi-like weight on the unit Euclidean ball. The following lemma provides an upper bound near the boundary.
\begin{lemma} \label{lm_sup_CD_kernel_near_boundary}
	Given $x \in S$ such that $d(x, \partial S) \le R/2$, we have
	$$\kappa_{\mu,d}(x,x) \le \dfrac{2^{p+2r} c_r}{C R^{p+r}} \left[ 2 \binom{p+d+2r+1}{d} - \binom{p+d+2r}{d} \right],$$
	where $c_r$ is defined in \eqref{eqn_def_cr}.
\end{lemma}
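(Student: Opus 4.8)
The plan is to reduce the problem to the case of a Jacobi-type weight on the unit ball via the rolling ball assumption, and then invoke the explicit Christoffel--Darboux kernel formula of \citet{xu1999summability}. Let $x \in S$ with $d(x,\partial S) \le R/2$. By Assumption \ref{aspt_on_S} - part 1, a ball of radius $R$ rolls inside $S$, so there is a point $z_x$ with $x \in \overline{B}_R(z_x) \subset S$. Since $d(x,\partial S)\le R/2$, the point $x$ lies at distance at least $R/2$ from the center in the reverse direction is not quite what we want; instead the key geometric observation is that $\overline{B}_R(z_x)$ contains a slightly smaller ball, and more importantly that $x$ lies inside $\overline{B}_R(z_x)$ at (relative) distance at most $1/2$ from the boundary. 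We will use the rolling ball $\overline{B}_R(z_x) \subset S$ together with monotonicity of the Christoffel function (Proposition \ref{prop_monotonicity_Ch_function}): since $w \ge C\,d(\cdot,\partial S)^r \ge C\,\delta_0^r$ is not bounded below on all of $\overline{B}_R(z_x)$, we instead bound $w$ from below on the ball by a Jacobi-type profile. Precisely, on $\overline{B}_R(z_x)$ one has $d(y,\partial S) \ge R - \|y - z_x\|_2$, hence $w(y) \ge C (R - \|y-z_x\|_2)^r \ge C' R^r (1 - \|y-z_x\|_2^2/R^2)^r$ for a suitable constant, using $(R-t)(R+t) = R^2 - t^2$ and $R+t \le 2R$.

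Next I would rescale: let $A$ be the affine map sending $\overline{B}_R(z_x)$ to the unit ball $B$, so $A(x)$ is a point $\xi \in B$ with $1 - \|\xi\|_2 = d(x,\partial \overline{B}_R(z_x))/R$. The displayed lower bound on $w$ transforms into a lower bound of $\mu_{\#A}$ by $C'' \nu_r$ where $\nu_r$ has density $(1-\|z\|_2^2)^r$ on $B$, up to the Jacobian factor $R^p$. By affine invariance (Proposition \ref{prop_Chf_function_affine invariant}) and monotonicity, $\Lambda_{\mu,d}(x) = \Lambda_{\mu_{\#A},d}(\xi) \ge C'' R^p \Lambda_{\nu_r,d}(\xi)$, equivalently $\kappa_{\mu,d}(x,x) \le (C'' R^p)^{-1}\kappa_{\nu_r,d}(\xi,\xi)$. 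Now I invoke Theorem 3.1 of \citet{xu1999summability}, which gives $\kappa_{\nu_r,d}(\xi,\xi)$ in closed form; evaluating the formula and bounding it uniformly over $\xi \in B$ — using that the relevant Gegenbauer/Jacobi sum is maximized near the boundary $\|\xi\|_2 \to 1$ — yields the bound $2\binom{p+d+2r+1}{d} - \binom{p+d+2r}{d}$ after collecting the normalization constant $c_r$ from \eqref{eqn_def_cr} and tracking the powers of $2$ coming from the $R+t\le 2R$ step.

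The main obstacle, and the part that requires genuine care rather than routine bookkeeping, is twofold. First, extracting a clean, dimension-explicit uniform bound on $\kappa_{\nu_r,d}(\xi,\xi)$ from Xu's kernel formula: that formula is an integral or a hypergeometric-type expression, and one must argue that its supremum over the ball is attained (or controlled) at the boundary and equals the stated binomial combination — this is where the constants $2\binom{p+d+2r+1}{d} - \binom{p+d+2r}{d}$ actually come from. Second, one must make sure the geometric comparison $w(y) \ge C' R^r(1-\|y-z_x\|_2^2/R^2)^r$ is valid with the right constants on the whole rolling ball and not merely near $x$; this uses only $d(y,\partial S) \ge R - \|y-z_x\|_2$, which holds because $\overline{B}_R(z_x) \subset S$ forces every boundary point of $S$ to be at distance at least $R - \|y - z_x\|_2$ from $y$. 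Once these two points are in place, the remaining steps — affine invariance, monotonicity, and assembling the constant $2^{p+2r}c_r/(CR^{p+r})$ — are direct.
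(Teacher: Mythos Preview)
Your overall strategy---compare $\mu$ to a Jacobi-type measure on a rolling ball, then rescale to the unit ball and invoke Xu's kernel formula---is exactly the paper's. The one substantive difference is tactical. You work with the full rolling ball $\overline{B}_R(z_x)$; after rescaling, the image $\xi$ of $x$ lands somewhere in the interior of $B$, and you are then forced to bound $\sup_{\xi\in B}\kappa_{\nu_r,d}(\xi,\xi)$, which you correctly flag as the nontrivial step. The paper sidesteps this entirely: it first observes that $\epsilon:=\|x-z_x\|_2$ satisfies $R/2\le\epsilon\le R$ (here is where the hypothesis $d(x,\partial S)\le R/2$ enters), and then works with the \emph{smaller} ball $\overline{B}_\epsilon(z_x)\subset\overline{B}_R(z_x)\subset S$. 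After rescaling this $\epsilon$-ball to $B$, the image $\tilde{x}=(x-z_x)/\epsilon$ lies \emph{exactly} on $\partial B$, so Xu's formula is evaluated directly at a boundary point, where the Gegenbauer integrand collapses to $C_k^{(p/2+r)}(1)=\binom{p+k+2r-1}{k}$ and the sum telescopes to $2\binom{p+d+2r+1}{d}-\binom{p+d+2r}{d}$. The factor $2^{p+2r}$ then comes from $\epsilon\ge R/2$ in the final constant.

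Your route can be made to work: from Xu's integral representation and the classical inequality $|C_k^{(\beta)}(t)|\le C_k^{(\beta)}(1)$ for $t\in[-1,1]$, $\beta>0$, one gets $\kappa_{\nu_r,d}(\xi,\xi)\le\kappa_{\nu_r,d}(\tilde{x},\tilde{x})$ for any $\tilde{x}\in\partial B$, so the supremum claim holds. In fact your version yields the sharper prefactor $2^r c_r/(CR^{p+r})$ rather than $2^{p+2r}c_r/(CR^{p+r})$, and does not even use $d(x,\partial S)\le R/2$. The trade-off is that you need the extra Gegenbauer inequality, whereas the paper's $\epsilon$-ball trick reduces everything to an exact boundary evaluation with no supremum argument at all.
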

\begin{proof}
	By Assumption \ref{aspt_on_S} - part 1, there exists a point $z_x \in S$ such that $x \in \overline{B}_R(z_x) \subset S$. We set $\epsilon = \|x-z_x\|_2$, then $\epsilon \le R$ and
	$$\epsilon \ge d(z_x, \partial S) - d(x, \partial S) \ge R - R/2 = R/2.$$
	Evidently, $x$ is on the boundary of the closed ball $\overline{B}_\epsilon(z_x) \subset S$. Moreover, for all $y \in \overline{B}_{\epsilon}(z_x)$, we have
	$$d(y, \partial S) \ge d(y, \partial \overline{B}_\epsilon(z_x)) = \epsilon - \|y-z_x\|_2 \ge 0.$$
	Now, by Assumption \ref{aspt_density_speed_to_zero}, we have for all $y \in \overline{B}_\epsilon(z_x)$:
	\begin{eqnarray*}
		w(y) \ge C \left(\epsilon - \|y-z_x\|_2\right)^r &\ge& C \left(\epsilon - \|y-z_x\|_2\right)^r \left(\dfrac{\epsilon + \|y-z_x\|_2}{2 \epsilon}\right)^r\\
		&=& \dfrac{C}{(2\epsilon)^r} \left(\epsilon^2 - \|y-z_x\|_2^2\right)^r.
	\end{eqnarray*}
	We have
	\begin{eqnarray*}
		\Lambda_{\mu, d}(x) &=& \min \left\{ \int\limits_{\R^p} P^2(y) d\mu(y): P \in \Pi_d^p, P(x) = 1 \right\} \\
		&=& \min \left\{ \int\limits_{S} P^2(y) w(y) dy: P \in \Pi_d^p, P(x) = 1 \right\} \\
		&\ge& \min \left\{ \int\limits_{\overline{B}_\epsilon(z_x)} P^2(y) w(y) dy: P \in \Pi_d^p, P(x) = 1 \right\} \\
		&\ge& \dfrac{C}{(2\epsilon)^r} \min \left\{ \int\limits_{\overline{B}_\epsilon(z_x)} P^2(y) \left(\epsilon^2 - \|y-z_x\|_2^2\right)^r dy: P \in \Pi_d^p, P(x) = 1 \right\}.
	\end{eqnarray*}
	Now, by changing the variable $z = \dfrac{y-z_x}{\epsilon}$ and setting $Q(z) = P(z_x+\epsilon z)$, we have
	\begin{eqnarray*}
		\int\limits_{\overline{B}_\epsilon(z_x)} P^2(y) \left(\epsilon^2 - \|y-z_x\|_2^2\right)^r dy &=& \int\limits_{B} P^2(z_x + \epsilon z) (\epsilon^2- \epsilon^2 \|z\|_2^2)^r \epsilon^p dz \\
		&=& \epsilon^{p+2r} \int\limits_B Q^2(z) (1- \|z\|_2^2)^r dz, 
	\end{eqnarray*}
	where $Q$ is a polynomial with degree at most $d$ and $Q \left(\frac{x-z_x}{\epsilon}\right) = P(x) = 1$. We set $\tilde{x} = \frac{x-z_x}{\epsilon} \in \partial B$ since $x \in \partial \overline{B}_\epsilon(z_x)$. Now we have
	\begin{eqnarray*}
		\Lambda_{\mu, d}(x) &\ge& \dfrac{C \epsilon^{p+2r}}{(2\epsilon)^r} \min \left\{\int\limits_B Q^2(z) (1- \|z\|_2^2)^r dz: Q \in \Pi_d^p, Q(\tilde{x})= 1 \right\}\\
		&=& \dfrac{C \epsilon^{p+r}}{2^r c_r} \min \left\{\int\limits_B Q^2(z) \, c_r (1- \|z\|_2^2)^r dz: Q \in \Pi_d^p, Q(\tilde{x})= 1 \right\}\\
		&=& \dfrac{C \epsilon^{p+r}}{2^r c_r} \Lambda_{\nu_r, d} (\tilde{x}),
	\end{eqnarray*}
	where we recall that $c_r = \dfrac{\Gamma(p/2+r+1)}{\pi^{p/2} \Gamma(r+1)}$ is the normalization constant of the measure $\nu_r$ which density is $(1-\|z\|_2^2)^r$ on the unit ball $B$. Then, by taking the inverse, we have
	$$\kappa_{\mu, d}(x,x) \le \dfrac{2^r c_r}{C \epsilon^{p+r}} \kappa_{\nu_r, d}(\tilde{x}, \tilde{x}).$$
	Now, to compute $\kappa_{\nu_r,d}(\tilde{x},\tilde{x})$, we use Theorem 3.1 in \citet{xu1999summability} with $\mu= r + \frac{1}{2}$ and we obtain that
	\begin{eqnarray*}
		\kappa_{\nu_r,d}(\tilde{x}, \tilde{x}) &=& \sum\limits_{k=0}^d \dfrac{k+ r + \frac{1}{2} + \frac{p-1}{2}}{r+\frac{1}{2} + \frac{p-1}{2}} \int\limits_0^\pi C_k^{\left(r + \frac{1}{2} + \frac{p-1}{2}\right)}\left(\left<\tilde{x},\tilde{x}\right>+ \sqrt{1-\|\tilde{x}\|_2^2} \sqrt{1-\|\tilde{x}\|_2^2} \cos \psi\right)\\
		&\times& (\sin \psi)^{2\left(r+\frac{1}{2}\right)-1} d\psi \bigg/ \int\limits_0^\pi (\sin \psi)^{2\left(r+\frac{1}{2}\right)-1} d\psi\\
		&=& \sum\limits_{k=0}^d \dfrac{k+\frac{p}{2}+r}{ \frac{p}{2}+r} C_k^{\left(\frac{p}{2}+r\right)}(1),
	\end{eqnarray*}
	where the $C_k^{(\beta)}$'s are the classical Gegenbauer polynomials, which are orthogonal polynomials on $[-1,1]$ with respect to the weight function $(1-x^2)^{\beta-1/2}$. In particular, by \citet[p.81, (4.7.3)]{szeg1939orthogonal}, we have $$C_k^{\left(\frac{p}{2}+r\right)}(1) = \binom{p+k+2r-1}{k},$$
	where we recall that the binomial coefficient for $\alpha \in \R$ and $k \in \N$ is defined as:
	$$\binom{\alpha}{k} := \dfrac{\Gamma(\alpha+1)}{\Gamma(k+1) \Gamma(\alpha-k+1)} = \dfrac{\alpha(\alpha-1)(\alpha-2) \ldots (\alpha-k+1)}{k!}.$$
	Hence
	\begin{eqnarray*}
		\kappa_{\nu_r,d}(\tilde{x},\tilde{x}) &=& \sum\limits_{k=0}^d \dfrac{k+ \frac{p}{2}+r}{ \frac{p}{2}+r} C_k^{\left(\frac{p}{2}+r\right)}(1) = \sum\limits_{k=0}^d \dfrac{k+ \frac{p}{2}+r}{ \frac{p}{2}+r} \binom{p+k+2r-1}{k}\\
		&=& \sum\limits_{k=0}^d \dfrac{2k+p+2r}{p+2r} \dfrac{(p+k+2r-1)(p+k+2r-2) \ldots (p+2r)}{k!}\\
		&=& \sum\limits_{k=0}^d \left( \dfrac{2(k+p+2r)}{p+2r}-1 \right) \dfrac{(p+k+2r-1)(p+k+2r-2) \ldots (p+2r)}{k!}\\
		&=& 2 \sum\limits_{k=0}^d \dfrac{(p+k+2r)(p+k+2r-1) \ldots (p+2r+1)}{k!} - \binom{p+k+2r-1}{k}\\
		&=& 2 \sum\limits_{k=0}^d \binom{p+k+2r}{k} - \sum\limits_{k=0}^d \binom{p+k+2r-1}{k}\\
		&=& 2 \binom{p+d+2r+1}{d} - \binom{p+d+2r}{d}.
	\end{eqnarray*}
	We finally have
	$$\kappa_{\mu, d}(x,x) \le \dfrac{2^r c_r}{C \epsilon^{p+r}} \left[2 \binom{p+d+2r+1}{d} - \binom{p+d+2r}{d}\right],$$
	and the result follows by using the fact that $\epsilon \ge R/2$.
\end{proof}
By combining Corollary \ref{coroll_sup_CD_kernel_inside} and Lemma \ref{lm_sup_CD_kernel_near_boundary}, we have the following theorem regarding the supremum of the Christoffel - Darboux kernel.
\begin{theorem} \label{thr_sup_CD_kernel}
	Let $S \subset \R^p$ satisfies Assumption \ref{aspt_on_S} - part 1 with radius $R>0$ and $w: S \longrightarrow \R$ satisfies Assumption \ref{aspt_density_speed_to_zero} with two constants $C>0$ and $r \ge 0$. Let $\mu$ be the measure supported on $S$ with density $w$ with respect to Lebesgue measure. We have for any $d \ge 2$,
	\begin{eqnarray*}
		\sup\limits_{x \in S} \kappa_{\mu,d}(x,x) &\le& \dfrac{4^{p+r} s(d)}{C \omega_p R^{p+r}} \dfrac{(d+p+1)(d+p+2)(2d+p+6)}{(d+1)(d+2)(d+3)} \\
		&+& \dfrac{2^{p+2r} c_r}{C R^{p+r}} \left[ 2 \binom{p+d+2r+1}{d} - \binom{p+d+2r}{d} \right].
	\end{eqnarray*}
\end{theorem}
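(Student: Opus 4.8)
The plan is to partition the support $S$ into two regions according to the distance to the boundary and handle each with one of the two preceding results. Concretely, write $S = S_{\mathrm{in}} \cup S_{\mathrm{bd}}$ where $S_{\mathrm{in}} = \{x \in S : d(x,\partial S) \ge R/2\}$ and $S_{\mathrm{bd}} = \{x \in S : d(x,\partial S) \le R/2\}$ (the two overlap on the level set $d(x,\partial S) = R/2$, which is harmless). This covers all of $S$ since for every $x \in S$ either $d(x,\partial S) \ge R/2$ or $d(x,\partial S) \le R/2$.

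On $S_{\mathrm{in}}$, I would invoke Corollary \ref{coroll_sup_CD_kernel_inside} directly: it asserts that for every $x$ with $d(x,\partial S) \ge R/2$ one has
$$\kappa_{\mu,d}(x,x) \le \frac{4^{p+r} s(d)}{C \omega_p R^{p+r}} \frac{(d+p+1)(d+p+2)(2d+p+6)}{(d+1)(d+2)(d+3)},$$
which is exactly the first summand in the claimed bound. On $S_{\mathrm{bd}}$, I would apply Lemma \ref{lm_sup_CD_kernel_near_boundary}, valid for every $x$ with $d(x,\partial S) \le R/2$, to obtain
$$\kappa_{\mu,d}(x,x) \le \frac{2^{p+2r} c_r}{C R^{p+r}} \left[ 2 \binom{p+d+2r+1}{d} - \binom{p+d+2r}{d} \right],$$
the second summand.

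It then remains to combine the two estimates. Taking the supremum over $x \in S$ gives $\sup_{x \in S} \kappa_{\mu,d}(x,x) \le \max(A, B)$ where $A$ and $B$ are the two right-hand sides above, and since both $A$ and $B$ are nonnegative (indeed positive), $\max(A,B) \le A + B$, which is precisely the asserted inequality for all $d \ge 2$ (the constraint $d \ge 2$ being inherited from Corollary \ref{coroll_sup_CD_kernel_inside}, itself a consequence of Lemma \ref{lm_lower_bound_inside}). There is essentially no obstacle in this final assembly: the real work has already been done in establishing the interior lower bound on $\Lambda_{\mu,d}$ (Lemma \ref{lm_lower_bound_inside}) and, for the near-boundary case, in reducing to a Jacobi-type measure on the ball and evaluating the Christoffel--Darboux kernel at a boundary point via Xu's explicit formula and the Gegenbauer identity $C_k^{(p/2+r)}(1) = \binom{p+k+2r-1}{k}$ together with the telescoping of binomial sums. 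The only point requiring a small amount of care is checking that the union of the two regions is genuinely all of $S$ and that the overlap causes no inconsistency, which is immediate.
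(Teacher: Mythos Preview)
Your proposal is correct and matches the paper's approach exactly: the paper states that Theorem \ref{thr_sup_CD_kernel} follows by combining Corollary \ref{coroll_sup_CD_kernel_inside} and Lemma \ref{lm_sup_CD_kernel_near_boundary}, and your partition of $S$ into $\{d(x,\partial S)\ge R/2\}$ and $\{d(x,\partial S)\le R/2\}$ together with the elementary inequality $\max(A,B)\le A+B$ is precisely the intended combination. There is nothing to add.
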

We denote the above upper bound by $m(d,p,S,w)$. Note that this bound is of order $d^{p+2r+1}$ when $p$ is fixed.
\section{Proof of the concentration results} \label{appen_proof_concentration_result}

\begin{proof} [Proof of Lemma \ref{lm_technical}]
	For all $x \in \R^p$, we have
	\begin{eqnarray*}
		\big|\Lambda_{\mu,d}(x) - \Lambda_{\mu_n,d}(x)\big| &=& \Lambda_{\mu,d}(x) \Lambda_{\mu_n,d}(x) \, \big|\kappa_{\mu,d}(x,x)-\kappa_{\mu_n,d}(x,x)\big|\\
		&=& \Lambda_{\mu,d}(x) \Lambda_{\mu_n,d}(x) \, \Big|v_d(x)^T \left(I_{s(d)}- M_{\mu_n,d}^{-1} \right) v_d(x) \Big|\\
		&=& \Lambda_{\mu,d}(x) \Lambda_{\mu_n,d}(x) \, \Big|v_d(x)^T \left(M_{\mu_n,d}^{-1/2}\right)^T \left(M_{\mu_n,d}- I_{s(d)} \right) M_{\mu_n,d}^{-1/2} v_d(x) \Big|\\
		&=& \Lambda_{\mu,d}(x) \Lambda_{\mu_n,d}(x) \, \Big|\left(M_{\mu_n,d}^{-1/2} v_d(x)\right)^T \left(M_{\mu_n,d}- I_{s(d)} \right) M_{\mu_n,d}^{-1/2} v_d(x) \Big|\\
		&\le& \Lambda_{\mu,d}(x) \Lambda_{\mu_n,d}(x) \left(M_{\mu_n,d}^{-1/2} v_d(x)\right)^T M_{\mu_n,d}^{-1/2} v_d(x) \, \|M_{\mu_n,d}- I_{s(d)}\|\\
		&=& \Lambda_{\mu,d}(x) \Lambda_{\mu_n,d}(x) v_d(x)^T M_{\mu_n,d}^{-1} v_d(x) \, \|M_{\mu_n,d}- I_{s(d)}\|\\
		&=& \Lambda_{\mu,d}(x) \Lambda_{\mu_n,d}(x) \kappa_{\mu_n,d}(x,x) \, \|M_{\mu_n,d}- I_{s(d)}\|\\
		&=& \Lambda_{\mu,d}(x) \, \|M_{\mu_n,d}- I_{s(d)}\|,
	\end{eqnarray*}
	where the third equality comes from the fact that $M_{\mu_n,d}$ is symmetric and positive definite, which implies that $M_{\mu_n,d}^{-1/2}$ exists and is also symmetric; while the inequality can be seen as
	$$\big|z^T A z\big| \le z^T z \|A\|,$$
	with $z = M_{\mu_n,d}^{-1/2} v_d(x) \in \R^{s(d)}$ and $A = M_{\mu_n,d}- I_{s(d)} \in \R^{s(d) \times s(d)}$.
	This inequality can be proved as below:
	$$\big|z^TAz\big| = \left<z,Az \right> \le \|z\|_2 \, \|Az\|_2 \le \|z\|_2 \|A\|\, \|z\|_2 = z^Tz \|A\|,$$
	where the first inequality is Cauchy-Schwarz and the second one comes from the definition of operator norm.
\end{proof}

\begin{proof} [Proof of Theorem \ref{thr_concentration_general}]
	Let $v_d = \{P_j: 1 \le j \le s(d)\}$ be a system of orthonormal polynomials with respect to $\mu$ and $M_{\mu_n,d}$ be the moment matrix of $\mu_n$ with respect to $v_d$. We apply Theorem 5.44 in \citet{vershynin2010introduction} to $$A = \left[ {\begin{array}{ccc}
		P_1(X_1) & \ldots & P_{s(d)}(X_1) \\
		\ldots &  & \ldots \\
		P_1(X_n) & \ldots & P_{s(d)}(X_n)
		\end{array} } \right],$$
	which is a $n \times s(d)$ random matrix whose rows $A_k = \left(P_1(X_k), \ldots, P_{s(d)}(X_k)\right)$ are independent random vectors in $\mathbb{R}^{s(d)}$ with the common second moment matrix $\Sigma = \mathbb{E}[A_k^T A_k] = I_{s(d)}$. We have $$ \dfrac{1}{n} A^T A = M_{\mu_n,d},$$ thus $$\bigg\| \dfrac{1}{n} A^T A - \Sigma \bigg\| = \|M_{\mu_n,d}-I_{s(d)}\|.$$
	If we can obtain an almost sure bound on the rows of $A_k$, then Theorem 5.44 in \citet{vershynin2010introduction} provides an upper bound for $\|M_{\mu_n}-I_{s(d)}\|$, and then, by Lemma \ref{lm_technical}, an upper bound for $|\Lambda_{\mu,d}(x)- \Lambda_{\mu_n,d}(x)|$ with high probability.
    Let us check the boundedness condition of the rows $A_k$. We have
	$$\|A_k\|_2^2 = \sum\limits_{j=1}^{s(d)} P_j(X_k)^2 = \kappa_{\mu,d}(X_k,X_k) = \dfrac{1}{\Lambda_{\mu,d}(X_k)}.$$
	A natural upper bound for this will be $$\sup\limits_{x \in \supp \mu} \sum\limits_{j=1}^{s(d)} P_j(x)^2 = \sup\limits_{x \in \supp \mu}\kappa_{\mu,d}(x,x) :=m, $$
	which is finite since $x \mapsto \sum_{j=1}^{s(d)} P_j(x)^2$ is continuous and $\supp \mu$ is a compact set.
	Now, by Theorem 5.44 in \citet{vershynin2010introduction}, for all $t \ge 0$, with probability at least $1 - s(d). \exp(-3t^2/16)$, we have
	$$\|M_{\mu_n,d}- I_{s(d)}\| \le \max \bigg(t \sqrt{ \dfrac{m}{n}}, \dfrac{t^2m}{n}\bigg).$$
	We choose $\alpha = s(d). \exp(-3t^2/16)$, which means $t= \sqrt{\dfrac{16}{3} \log \dfrac{s(d)}{\alpha}}$, and we have
	$$\|M_{\mu_n,d}- I_{s(d)}\| \le \max \left( \sqrt{\dfrac{16m}{3n} \log \dfrac{s(d)}{\alpha}},\dfrac{16m}{3n} \log \dfrac{s(d)}{\alpha} \right)$$
	with probability at least $1 - \alpha$.	
	Then by lemma \ref{lm_technical}, $$\big|\Lambda_{\mu,d}(x) - \Lambda_{\mu_n,d}(x)\big| \le \Lambda_{\mu,d}(x)\, \max \left( \sqrt{\dfrac{16m}{3n} \log \dfrac{s(d)}{\alpha}},\dfrac{16m}{3n} \log \dfrac{s(d)}{\alpha} \right) $$
	with probability at least $1 - \alpha$.	
\end{proof}

\section{Proofs of the main results regarding support estimation} \label{appen_proofs_main_results_support_estimation}
\subsection{Proof of Lemma \ref{lm_bounds_of_support}} \label{appen_proofs_connection}
First, we introduce some inequalities which will be useful in the proof of Lemma \ref{lm_bounds_of_support}.

\begin{lemma} [see e.g. \citet{lasserre2019empirical}, Lemma 6.5] \label{lm_inequality_binom}
	For any $m, n \in \mathbb{N}^*$, we have
	\begin{eqnarray*} 
	\binom{m+n}{m} \le m^n \left(\dfrac{e}{n}\right)^n \exp{\left(\dfrac{n^2}{m}\right)}.
	\end{eqnarray*}
\end{lemma}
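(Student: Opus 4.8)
The plan is to estimate $\binom{m+n}{m}$ directly from its product form and two completely elementary inequalities. First I would rewrite
\[
\binom{m+n}{m} = \binom{m+n}{n} = \frac{(m+n)(m+n-1)\cdots(m+1)}{n!} = \frac{1}{n!}\prod_{k=1}^{n}(m+k).
\]
For the numerator, I would bound each factor using $1+x \le e^{x}$ with $x = k/m$, namely $m+k = m\left(1+\tfrac{k}{m}\right) \le m\, e^{k/m}$. Taking the product over $k=1,\dots,n$ gives
\[
\prod_{k=1}^{n}(m+k) \le m^{n} \exp\!\left(\frac{1}{m}\sum_{k=1}^{n} k\right) = m^{n}\exp\!\left(\frac{n(n+1)}{2m}\right) \le m^{n}\exp\!\left(\frac{n^{2}}{m}\right),
\]
where the last step uses $n(n+1) \le 2n^{2}$, valid since $n \ge 1$. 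This slightly wasteful replacement of $n(n+1)/2$ by $n^{2}$ is exactly what produces the clean form of the stated bound.

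For the denominator, I would use the standard lower bound $n! \ge (n/e)^{n}$, which follows at once from $e^{n} = \sum_{j \ge 0} n^{j}/j! \ge n^{n}/n!$; equivalently $1/n! \le (e/n)^{n}$. Multiplying this with the previous display yields
\[
\binom{m+n}{m} = \frac{1}{n!}\prod_{k=1}^{n}(m+k) \le \left(\frac{e}{n}\right)^{n} m^{n}\exp\!\left(\frac{n^{2}}{m}\right) = m^{n}\left(\frac{e}{n}\right)^{n}\exp\!\left(\frac{n^{2}}{m}\right),
\]
which is the claim. There is no real obstacle here: the only points requiring a word of care are that the inequality $1+x\le e^x$ is applied termwise and that the passage from $n(n+1)/2$ to $n^{2}$ is legitimate precisely because $n \in \mathbb{N}^{*}$.
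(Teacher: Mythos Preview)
Your proof is correct and complete. The paper itself does not supply a proof of this lemma; it merely cites it from \citet{lasserre2019empirical}, Lemma 6.5, so there is no argument in the paper to compare against.
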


\begin{lemma} [see e.g \citet{marx2019tractable}, Lemma 5] \label{lm_inequality_1}
	For any $q>0$, we have
	$$\min\limits_{x>0} \left[\log(2) x-2q \log(x)\right] = 2q \left(1-\log\left(\dfrac{2q}{\log(2)}\right)\right) \ge 2q(1-\log(3q)).$$
\end{lemma}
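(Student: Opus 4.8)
The plan is to treat this as a one-variable calculus exercise. For fixed $q>0$ I would introduce $f(x) := \log(2)\,x - 2q\log(x)$ on the open half-line $x>0$ and compute $f'(x) = \log(2) - 2q/x$. This derivative vanishes exactly at $x^\star := 2q/\log(2) > 0$; since $f''(x) = 2q/x^2 > 0$ on $(0,\infty)$ the function $f$ is strictly convex, so $x^\star$ is its unique global minimizer (equivalently, $f'<0$ on $(0,x^\star)$ and $f'>0$ on $(x^\star,\infty)$). Evaluating,
$$
f(x^\star) = \log(2)\cdot\frac{2q}{\log 2} - 2q\log\!\left(\frac{2q}{\log 2}\right) = 2q - 2q\log\!\left(\frac{2q}{\log 2}\right) = 2q\left(1 - \log\!\left(\frac{2q}{\log 2}\right)\right),
$$
which is the asserted value of the minimum.

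For the lower bound, since $2q>0$ it suffices to show $1 - \log\!\left(\frac{2q}{\log 2}\right) \ge 1 - \log(3q)$, i.e. $\log\!\left(\frac{2q}{\log 2}\right) \le \log(3q)$. Using monotonicity of $\log$ and dividing the arguments by $q>0$, this is equivalent to $\frac{2}{\log 2} \le 3$, i.e. $\log 2 \ge \frac{2}{3}$, which holds numerically since $\log 2 = 0.6931\ldots > 0.6667\ldots = \frac{2}{3}$. Multiplying back through by $2q$ gives the stated inequality.

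I do not anticipate any real obstacle. The only points that deserve a moment of care are confirming that $\log$ denotes the natural logarithm throughout (with a base-$2$ logarithm the final inequality would instead be the trivial $2/\log_2 2 = 2 \le 3$), and noting that the infimum is genuinely attained, which follows from strict convexity together with $f(x)\to+\infty$ as $x\to 0^+$ and as $x\to+\infty$.
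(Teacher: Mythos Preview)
Your proof is correct and is exactly the natural one-variable calculus argument for this elementary inequality. The paper does not supply its own proof of this lemma---it cites it from \citet{marx2019tractable}---so there is nothing to compare; your argument is the expected one.
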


\begin{lemma} \label{lm_inequality_2}
	For any $d \in \N$, $0 < \epsilon < 1$ and $q>0$, we have
	$$2^{3-d^\epsilon} \le \dfrac{8 (3q)^{2q}}{e^{2q} d^{2q \epsilon}}.$$
\end{lemma}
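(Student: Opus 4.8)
The plan is to reduce the claimed inequality to the one-dimensional optimization bound already recorded in Lemma \ref{lm_inequality_1}. We may assume $d \ge 1$, since for $d = 0$ the factor $d^{-2q\epsilon}$ is $+\infty$ and the inequality holds trivially. Taking natural logarithms, the assertion $2^{3-d^\epsilon} \le 8(3q)^{2q} e^{-2q} d^{-2q\epsilon}$ is equivalent to
\begin{equation*}
(3 - d^\epsilon)\log 2 \;\le\; \log 8 + 2q\log(3q) - 2q - 2q\epsilon\log d,
\end{equation*}
and since $\log 8 = 3\log 2$, cancelling $3\log 2$ from both sides leaves the equivalent inequality
\begin{equation*}
2q\epsilon\log d - d^\epsilon\log 2 \;\le\; 2q\log(3q) - 2q .
\end{equation*}

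The key step is the substitution $x := d^\epsilon$, which is legitimate because $d \ge 1$ forces $x > 0$ and $\log x = \epsilon\log d$. After this change of variable the left-hand side above becomes $2q\log x - x\log 2 = -\bigl(\log(2)\,x - 2q\log x\bigr)$, so it remains to verify
\begin{equation*}
\log(2)\,x - 2q\log x \;\ge\; 2q - 2q\log(3q) \qquad\text{for all } x > 0 .
\end{equation*}
This is precisely what Lemma \ref{lm_inequality_1} delivers: the left-hand side is bounded below by its minimum over $x>0$, namely $2q\bigl(1 - \log(2q/\log 2)\bigr)$, which is in turn at least $2q\bigl(1 - \log(3q)\bigr) = 2q - 2q\log(3q)$. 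Reading the chain of equivalences backwards then yields the claim.

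I do not anticipate a genuine obstacle here: the argument is a short sequence of algebraic rearrangements together with a direct invocation of Lemma \ref{lm_inequality_1}. The only points requiring a little care are the bookkeeping of constants when passing to logarithms (matching $\log 8$ with $3\log 2$) and the remark that the lower bound in Lemma \ref{lm_inequality_1} is uniform in $x$, so it applies at the particular value $x = d^\epsilon$ for every admissible $d \ge 1$, $\epsilon \in (0,1)$ and $q > 0$.
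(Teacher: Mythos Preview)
Your proof is correct and follows essentially the same route as the paper: take logarithms, rearrange so that the inequality reads $2q(1-\log(3q)) \le \log(2)\,d^\epsilon - 2q\log(d^\epsilon)$, and then invoke Lemma~\ref{lm_inequality_1} at $x = d^\epsilon$. Your treatment is in fact slightly more careful in that you explicitly dispose of the degenerate case $d=0$.
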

\begin{proof}
	We have
	\begin{eqnarray*}
		&& 2^{3-d^\epsilon} \le \dfrac{8 (3q)^{2q}}{e^{2q} d^{2q\epsilon}}\\
		&\Leftrightarrow& (3-d^\epsilon) \log(2) \le 3 \log(2) + 2q \log(3q) - 2q - 2q\epsilon \log(d)\\
		&\Leftrightarrow& 2q(1-\log(3q)) \le \log(2) d^\epsilon - 2q \log(d^\epsilon),
	\end{eqnarray*}
	which holds true by applying Lemma \ref{lm_inequality_1} to $x = d^\epsilon>0$.
\end{proof}

\begin{proof} [Proof of Lemma \ref{lm_bounds_of_support}] 
    The inclusion $S_{d,n} \subset S^1$ can be proved for a more general thresholding scheme, where we define for any $d \in \N$ and any $q \in \N$ such that $2q \epsilon>p$:
    \begin{equation} \label{eqn_thresholding_scheme_with_d_general}
        \left\{\begin{aligned}
        \gamma_d &:= \dfrac{8 (1+\beta) (3q)^{2q}}{e^{2q} d^{2q \epsilon}} \\
        S_{d,n} &:= \{x \in \R^p: \Lambda_{\mu_n, d}(x) \ge \gamma_d\}.
        \end{aligned}\right.
\end{equation} 

	Indeed, if $x \notin S^1$, i.e. $d(x, S) > \delta_1(d) = \dfrac{\diam(S)}{d^{1-\epsilon}-1}$, we apply Lemma \ref{lm_upper_bound_outside}, then Lemma \ref{lm_inequality_2} and we have
	$$\Lambda_{\mu,d}(x) < 2^{3- \frac{\delta_1(d) d}{\delta_1(d)+ \diam(S)}} = 2^{3-d^\epsilon} \le \dfrac{8 (3q)^{2q}}{e^{2q} d^{2q \epsilon}}.$$
	Since $\Lambda_{\mu_n, d}(x) \le (1+\beta) \Lambda_{\mu,d}(x)$ by \eqref{eqn_relation_mu_and_mu_n_another_expression}, we obtain that
	$$\Lambda_{\mu_n,d}(x) \le \dfrac{8(1+\beta) (3q)^{2q}}{e^{2q} d^{2q\epsilon}} = \gamma_d,$$
	which means that $x \notin S_{d,n}$ and we can deduce the result by contraposition.
	
	By choosing $q = \frac{p(2-\epsilon)+(1-\epsilon)r}{2 \epsilon}$ in the scheme \eqref{eqn_thresholding_scheme_with_d_general}, we obtain our thresholding scheme \eqref{eqn_thresholding_scheme_with_d_and_n} and the result $S_{d,n} \subset S^1$ follows.
	
    Now, if $x \in S^2$, i.e. $x \in S$ and $d(x, \partial S) \ge 
	\delta_2(d, \beta)$, then by Lemma \ref{lm_lower_bound_inside}, we have
	\begin{eqnarray*}
		\Lambda_{\mu,d}(x) &\ge& \dfrac{C \omega_p \, (\delta_d^2)^{p+r}}{2^{p+r}} \dfrac{1}{s(d)} \dfrac{(d+1)(d+2)(d+3)}{(d+p+1)(d+p+2)(2d+p+6)}\\
		&\ge& \dfrac{C \omega_p (\delta_2(d, \beta))^{p+r}}{2^{p+r} d^p \left(\dfrac{e}{p} \right)^p \exp\left(\dfrac{p^2}{d}\right)} \dfrac{24}{(p+2)(p+3)(p+8)}\\
		&=& \dfrac{C \omega_p}{2^{p+r} d^p \left(\dfrac{e}{p} \right)^p \exp\left(\dfrac{p^2}{d}\right)} \dfrac{24}{(p+2)(p+3)(p+8)} \dfrac{2^{p+r}}{d^{(1-\epsilon)(p+r)}} \\
		&\times&  \dfrac{(1+\beta)(p+2)(p+3)(p+8)}{3C(1-\beta) \omega_p} \left(\dfrac{3p(2-\epsilon)+3(1-\epsilon)r}{2 \epsilon e}\right)^{\frac{p(2-\epsilon) + (1-\epsilon)r}{\epsilon}} \left(\dfrac{e^{1+\frac{p}{d}}}{p}\right)^{p}\\
		&=& \frac{8(1+\beta)}{1-\beta} \left(\dfrac{3p(2-\epsilon)+3(1-\epsilon)r}{2 \epsilon e}\right)^{\frac{p(2-\epsilon) + (1-\epsilon)r}{\epsilon}} \dfrac{1}{d^{p(2-\epsilon)+(1-\epsilon)r}}.
	\end{eqnarray*}
	Since $\Lambda_{\mu_n,d}(x) \ge (1-\beta) \Lambda_{\mu,d}(x)$ by \eqref{eqn_relation_mu_and_mu_n_another_expression}, we have $\Lambda_{\mu_n,d}(x) \ge \gamma_d$, which means that $x \in S_{d,n}$.
\end{proof}
\subsection{Proof of Theorem \ref{thr_support_inference_empirical_Chf_function_Hausdorff_distance}} \label{appen_proofs_Hausdorff_distance}
First, we have the following lemma which highlights an important property of a set $S$ which satisfies Assumption \ref{aspt_on_S} - part 1.

\begin{lemma} \label{lm_Hausdorff_distance}
	Let $S \subset \R^p$ satisfies Assumption \ref{aspt_on_S} - part 1 with radius $R > 0$. Given $\delta_1, \delta_2 > 0$, we set
	$$S^1 := \{x \in \R^p: d(x, S) \le \delta_1 \}$$
	and $$S^2 := \{x \in S: d(x, \partial S) \ge \delta_2\}.$$
	Suppose that there exists a closed set $\tilde{S} \subset \R^p$ such that 
	\begin{eqnarray} \label{eqn_bounds_of_support}
	S^2 \subset \tilde{S} \subset S^1.
	\end{eqnarray}
	If we have in addition that $\delta := \max(\delta_1, \delta_2) \le R$, then 
	$$d_H(S, \tilde{S}) \le \delta$$
	and $$d_H(\partial S, \partial \tilde{S}) \le \delta.$$
\end{lemma}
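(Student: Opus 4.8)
The plan is to prove the four one-sided estimates
\[
\sup_{x\in S} d(x,\tilde S)\le\delta,\qquad \sup_{y\in\tilde S} d(y,S)\le\delta,\qquad \sup_{x\in\partial S} d(x,\partial\tilde S)\le\delta,\qquad \sup_{y\in\partial\tilde S} d(y,\partial S)\le\delta,
\]
whose combination gives $d_H(S,\tilde S)\le\delta$ and $d_H(\partial S,\partial\tilde S)\le\delta$. I will use throughout that $S$ is compact (hence closed), that $S^2\subseteq\tilde S\subseteq S^1$, and that Assumption \ref{aspt_on_S}-part 1 provides, for every $x\in S$, a closed ball $\overline B_R(z_x)\subseteq S$ with $x\in\overline B_R(z_x)$ and, for every $x\in\overline{S^c}$, a closed ball $\overline B_R(\zeta_x)\subseteq\overline{S^c}$ with $x\in\overline B_R(\zeta_x)$. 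I will also record the elementary remark that, since every point of $S$ lies in some $\overline B_R(z_x)\subseteq S$ with $B_R(z_x)\subseteq\Int S$, one has $S=\overline{\Int S}$, so $d(\,\cdot\,,S)=d(\,\cdot\,,\Int S)$.

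The two ``easy'' bounds are handled first. If $y\in\tilde S\subseteq S^1$ then $d(y,S)\le\delta_1\le\delta$, which settles $\sup_{y\in\tilde S}d(y,S)$. For $x\in S$, pick $\overline B_R(z_x)\ni x$ with $\overline B_R(z_x)\subseteq S$ and set $p:=x+(\delta_2/R)(z_x-x)$; then $p\in\overline B_R(z_x)$, $d(p,S^c)\ge\delta_2$ (so $p\in S^2\subseteq\tilde S$), and $d(x,p)\le\delta_2\le\delta$, giving $\sup_{x\in S}d(x,\tilde S)\le\delta$. For the boundary, let $y\in\partial\tilde S$. If $y\notin S$, then the nearest point of the closed set $S$ to $y$ lies on $\partial S$, so $d(y,\partial S)=d(y,S)\le\delta_1\le\delta$. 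If $y\in S$ and $d(y,\partial S)>\delta_2$, then $y\in\Int S$ and, by continuity of $d(\,\cdot\,,\partial S)$, a whole ball around $y$ is contained in $\{x\in S: d(x,\partial S)>\delta_2\}\subseteq S^2\subseteq\tilde S$, contradicting $y\in\partial\tilde S$; hence $d(y,\partial S)\le\delta_2\le\delta$. This settles $\sup_{y\in\partial\tilde S}d(y,\partial S)\le\delta$.

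The core of the proof is $\sup_{x\in\partial S}d(x,\partial\tilde S)\le\delta$. Fix $x\in\partial S$ and apply the rolling ball condition inside and outside $S$ at $x$: one gets closed balls $\overline B_R(z^{\mathrm{in}})\subseteq S$ and $\overline B_R(z^{\mathrm{out}})\subseteq\overline{S^c}$, each containing $x$. Since $B_R(z^{\mathrm{in}})\subseteq\Int S$ and $B_R(z^{\mathrm{out}})\subseteq\R^p\setminus\Int S$ are disjoint open balls of radius $R$ and $x$ lies on both bounding spheres, the triangle inequality forces $z^{\mathrm{in}},x,z^{\mathrm{out}}$ to be colinear with $x$ the midpoint of $[z^{\mathrm{in}},z^{\mathrm{out}}]$. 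Write $n:=(z^{\mathrm{in}}-x)/R$ and $\gamma(t):=x+tn$ for $t\in[-R,R]$, so $d(x,\gamma(t))=|t|$. Using $\gamma(t)\in\overline B_{R-t}(z^{\mathrm{in}})$ for $t\in[0,R]$ and $\gamma(t)\in\overline B_{R-|t|}(z^{\mathrm{out}})$ for $t\in[-R,0]$, one checks that $\gamma(t)\in\Int S$ with $d(\gamma(t),\partial S)\ge t$ for $t\in(0,R]$, and $d(\gamma(t),S)=|t|$ for $t\in[-R,0]$. Hence $\gamma(\delta_2)\in S^2\subseteq\tilde S$ (with $\gamma(0)=x\in S=S^2$ when $\delta_2=0$), while $\gamma(-t)\notin S^1\supseteq\tilde S$ for every $t\in(\delta_1,R]$. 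Now put $t^\star:=\min\{t\in[-R,\delta_2]:\gamma(t)\in\tilde S\}$, which is well defined and attained since $\tilde S$ is closed; the two facts just displayed force $t^\star\in[-\delta_1,\delta_2]$, and for $t\in[-R,t^\star)$ we have $\gamma(t)\notin\tilde S$, so $\gamma(t^\star)$ is a limit of points of $\tilde S^c$ lying in $\tilde S$, i.e.\ $\gamma(t^\star)\in\partial\tilde S$. Therefore $d(x,\partial\tilde S)\le|t^\star|\le\max(\delta_1,\delta_2)=\delta$.

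I expect the last argument to be where the subtleties lie. Two points need care: (i) justifying the colinearity of the inward and outward tangent balls through $x$ (this relies precisely on $\overline B_R(z^{\mathrm{out}})$ missing $\Int S$, so that the two open balls are disjoint) and the elementary distance identities for $\gamma$; and (ii) the borderline case $\delta_1=R$ (which forces $\delta=R$), in which the parameter interval $(\delta_1,R]$ is empty and the displayed choice of $t^\star$ no longer certifies $t^\star\ge-\delta_1$. The latter case should be dispatched separately: if $x\notin\tilde S$ then the nearest point of the closed set $\tilde S$ to $x$ is on $\partial\tilde S$, whence $d(x,\partial\tilde S)=d(x,\tilde S)\le\delta_2\le R=\delta$; if $x\in\tilde S$ one walks along $\gamma$ from $x$ towards $z^{\mathrm{out}}$ and takes the first exit time from $\tilde S$ (which is $\le R$ and lands on $\partial\tilde S$), unless the whole segment stays in $\tilde S$, in which case $z^{\mathrm{out}}\in\tilde S$ with $d(z^{\mathrm{out}},S)=R=\delta_1$, and a short argument using $\Int\tilde S\subseteq\Int S^1$ and the rolling-ball geometry at $x$ shows $z^{\mathrm{out}}\in\partial\tilde S$, again giving $d(x,\partial\tilde S)\le R=\delta$. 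The remaining verifications are routine.
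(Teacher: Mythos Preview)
Your proof is correct, and for three of the four one-sided estimates it follows the paper's argument essentially verbatim: the bounds $\sup_{y\in\tilde S}d(y,S)\le\delta$, $\sup_{x\in S}d(x,\tilde S)\le\delta$, and $\sup_{y\in\partial\tilde S}d(y,\partial S)\le\delta$ are obtained in the same way in both proofs (your use of the full $R$-ball in the second estimate, moving toward its centre, is just a reparameterization of the paper's use of a $\delta_2$-ball).

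The genuine difference is in the fourth estimate, $\sup_{x\in\partial S}d(x,\partial\tilde S)\le\delta$. You build the two tangent balls at $x$, deduce colinearity of their centres with $x$, parameterize the common normal line $\gamma$, and use a first-entry-time argument into $\tilde S$ along $\gamma$ to locate a point of $\partial\tilde S$ within $[-\delta_1,\delta_2]$; this forces the separate treatment of the borderline $\delta_1=R$ that you flag. The paper's route is shorter and avoids all of this: it splits on whether $x\in\tilde S$, and in the nontrivial case $x\in\tilde S\cap\partial S$ observes that for any closed set $A$ with $x\in A$ one has $d(x,\partial A)=d(x,\overline{A^c})$, so from $\tilde S\subset S^1$ one gets directly
\[
d(x,\partial\tilde S)=d(x,\overline{\tilde S^c})\le d(x,\overline{(S^1)^c})=d(x,\partial S^1),
\]
and then a single application of the \emph{outside} rolling ball (with radius $\delta_1\le R$) produces a point $y_x\in\overline{(S^1)^c}$ at distance $\le\delta_1$ from $x$. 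This bypasses the colinearity lemma, the intermediate-value argument, and the $\delta_1=R$ case in one stroke. Your approach has the merit of being entirely constructive and of exhibiting an explicit point of $\partial\tilde S$ on the normal line, but the paper's inclusion argument is cleaner here.
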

\begin{proof} We will begin with the Hausdorff distance between two sets $S$ and $\tilde{S}$:
	$$d_H(S, \tilde{S}) = \max \left( \sup\limits_{x \in \tilde{S}} d(x,S), \sup\limits_{x \in S} d(x, \tilde{S})\right).$$
	Given $x \in \tilde{S}$, then by \eqref{eqn_bounds_of_support}, $x \in S^1$, i.e. $d(x, S) \le \delta_1 \le \delta$. Hence
	\begin{eqnarray} \label{eqn_Hausdorff_sets_1}
	\sup\limits_{x \in \tilde{S}} d(x, S) \le \delta.
	\end{eqnarray}
	Given now $x \in S$. Since $S^2 \subset \tilde{S}$, we have
	$d(x, \tilde{S}) \le d(x, S^2).$ By Assumption \ref{aspt_on_S} - part 1 and since $\delta_2 \le R$, there exists $z_x \in S$ such that $x \in \overline{B}_{\delta_2}(z_x) \subset S$. Hence $\|x - z_x\|_2 \le \delta_2$ and $d(z_x, \partial S) \ge \delta_2$. We have now $z_x \in S^2$ and
	$d(x, S^2) \le \|x - z_x\|_2 \le \delta_2.$
	Then $d(x, \tilde{S}) \le \delta_2 \le \delta$ and
	\begin{eqnarray} \label{eqn_Hausdorff_sets_2}
	\sup\limits_{x \in S} d(x, \tilde{S}) \le \delta.
	\end{eqnarray}
	By combining \eqref{eqn_Hausdorff_sets_1} and \eqref{eqn_Hausdorff_sets_2}, we have $d_H(S, \tilde{S}) \le \delta$. Now, we continue with the Hausdorff distance between two boundaries:
	$$d_H(\partial S, \partial \tilde{S}) = \max \left(\sup\limits_{x \in \partial \tilde{S}} d(x, \partial S) , \sup\limits_{x \in \partial S} d(x, \partial \tilde{S})\right).$$
	Consider $x \in \partial \tilde{S}$. We will consider two cases where $x \in S$ and $x \notin S$ separately. If $x \notin S$, then $d(x, \partial S) = d(x, S) \le \delta$
	by \eqref{eqn_Hausdorff_sets_1} since $x \in \partial \tilde{S} \subset \tilde{S}$. Now, consider $x \in S \cap \partial \tilde{S}$. Note that since $S^2 \subset \tilde{S}$, $\Int S^2 \subset \Int \tilde{S}$, which implies that 
	$\Int S^2 \cap \partial \tilde{S} = \emptyset.$
	Now when $x \in \partial \tilde{S}$, we have $x \notin \Int S^2 = \{x \in S: d(x, \partial S) > \delta_2\}$. By combining with the fact that $x \in S$, we have $d(x, \partial S) \le \delta_2 \le \delta$. Hence
	\begin{eqnarray} \label{eqn_Hausdorff_boundaries_1}
	\sup\limits_{x \in \partial \tilde{S}} d(x, \partial S) \le \delta.
	\end{eqnarray}
	Consider now $x \in \partial S$. We also consider two cases where $x \in \tilde{S}$ and $x \notin \tilde{S}$ separately. If $x \notin \tilde{S}$, then $d(x, \partial \tilde{S}) = d(x, \tilde{S}) \le \delta$ by \eqref{eqn_Hausdorff_sets_2} since $x \in \partial S \subset S$. Given now $x \in \tilde{S} \cap \partial S$. Note that since $\tilde{S} \subset S^1$ and $x \in \tilde{S}$, we have
	$d(x, \partial \tilde{S}) \le d(x, \partial S^1) = d(x, \overline{(S^1)^c}).$
	When $x \in \partial S$, $x \in \overline{S^c}$. By Assumption \ref{aspt_on_S} - part 1 and since $\delta_1 \le R$, there exists $y_x \in \overline{S^c}$ such that $x \in \overline{B}_{\delta_1}(y_x) \subset \overline{S^c}$. Hence $\|x - y_x\|_2 \le \delta_1$ and $d(y_x, \partial S) = d(y_x, \partial \overline{S^c}) \ge \delta_1$. We have now $y_x \in \overline{(S^1)^c}$ and $d(x, \overline{(S^1)^c}) \le \|x-y_x\|_2 \le \delta_1$. Then $d(x, \partial \tilde{S}) \le \delta_1 \le \delta$. Now we have
	\begin{eqnarray} \label{eqn_Hausdorff_boundaries_2}
	\sup\limits_{x \in \partial S} d(x, \partial \tilde{S}) \le \delta.
	\end{eqnarray}
	By combining \eqref{eqn_Hausdorff_boundaries_1} and \eqref{eqn_Hausdorff_boundaries_2}, we obtain that $d_H(\partial S, \partial \tilde{S}) \le \delta$.
\end{proof}
 Now, by combining the bounds on $S$ which have been shown in Lemma \ref{lm_bounds_of_support} with the previous property of $S$, we obtain a result concerning the Hausdorff distance between two sets and two boundaries for the thresholding scheme \eqref{eqn_thresholding_scheme_with_d_and_n}.
\begin{lemma} \label{lm_Hausdorff_distance_population}
	Under the assumptions and definitions of Lemma \ref{lm_bounds_of_support}, we suppose in addition that $S$ satisfies Assumption \ref{aspt_on_S} - part 1 with radius $R>0$. For any $d > 1$ large enough such that $\delta_d \le R$, the thresholding scheme \eqref{eqn_thresholding_scheme_with_d_and_n} satisfies that
	$$d_H(S, S_{d,n}) \le \delta_d$$
	and $$d_H(\partial S, \partial S_{d,n}) \le \delta_d.$$
\end{lemma}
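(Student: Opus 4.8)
The statement is a direct consequence of the two preceding lemmas, so the plan is simply to compose them. First I would invoke Lemma \ref{lm_bounds_of_support}: under the assumptions and definitions in force (in particular the existence of $\beta<1$ for which \eqref{eqn_relation_mu_and_mu_n} holds, together with Assumption \ref{aspt_density_speed_to_zero}), the thresholding scheme \eqref{eqn_thresholding_scheme_with_d_and_n} satisfies
$$ S^2 \subset S_{d,n} \subset S^1, $$
where $S^1=\{x\in\R^p : d(x,S)\le \delta_1(d)\}$ and $S^2=\{x\in S : d(x,\partial S)\ge \delta_2(d,\beta)\}$, with $\delta_1(d)$ and $\delta_2(d,\beta)$ as in \eqref{eqn_delta1} and \eqref{eqn_delta2}.

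Next I would check that $S_{d,n}$ is a closed set, so that it is a legitimate choice for $\tilde S$ in Lemma \ref{lm_Hausdorff_distance}: indeed $\Lambda_{\mu_n,d}$ is continuous on $\R^p$ (it equals $1/\kappa_{\mu_n,d}(\cdot,\cdot)$, a reciprocal of a positive polynomial via Proposition \ref{prop_CD_kernel_moment_matrix}), hence the superlevel set $S_{d,n}=\{x : \Lambda_{\mu_n,d}(x)\ge \gamma_d\}$ is closed. With this in hand I would apply Lemma \ref{lm_Hausdorff_distance} with $\tilde S = S_{d,n}$, $\delta_1=\delta_1(d)$, $\delta_2=\delta_2(d,\beta)$, and $\delta=\delta_d:=\max(\delta_1(d),\delta_2(d,\beta))$. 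The hypotheses of that lemma are met: $S$ satisfies Assumption \ref{aspt_on_S} - part 1 with radius $R$, the sandwich \eqref{eqn_bounds_of_support} is exactly the inclusion produced by Lemma \ref{lm_bounds_of_support}, and the requirement $\delta=\delta_d\le R$ is precisely the hypothesis "$d>1$ large enough such that $\delta_d\le R$" in the statement (this is consistent since $\delta_d = O(d^{-(1-\epsilon)})\to 0$, so such $d$ exist). Lemma \ref{lm_Hausdorff_distance} then yields $d_H(S,S_{d,n})\le \delta_d$ and $d_H(\partial S,\partial S_{d,n})\le \delta_d$, which is the claim.

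There is essentially no obstacle here; the content of the result lies entirely in Lemmas \ref{lm_bounds_of_support} and \ref{lm_Hausdorff_distance}, and this lemma is the bookkeeping step that packages them together. The only minor points to be careful about are (i) identifying the sets $S^1,S^2$ appearing in the two lemmas so that the inclusion chain matches the hypothesis \eqref{eqn_bounds_of_support}, and (ii) verifying the threshold condition $\delta_d\le R$, which is taken as an assumption and which will later be guaranteed, in the proof of Theorem \ref{thr_support_inference_empirical_Chf_function_Hausdorff_distance}, by the choice of $d_n$ together with the bound $n\ge n_0$.
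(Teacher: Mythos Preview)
Your proposal is correct and follows essentially the same approach as the paper: invoke Lemma \ref{lm_bounds_of_support} to obtain the sandwich $S^2\subset S_{d,n}\subset S^1$, then apply Lemma \ref{lm_Hausdorff_distance} with $\tilde S=S_{d,n}$, $\delta_1=\delta_1(d)$, $\delta_2=\delta_2(d,\beta)$ under the standing assumption $\delta_d\le R$. Your added verification that $S_{d,n}$ is closed is a small but welcome bit of care that the paper leaves implicit.
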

\begin{proof}
	We use Lemma \ref{lm_bounds_of_support}, then apply Lemma \ref{lm_Hausdorff_distance} with $\tilde{S} = S_{d,n}$, $\delta_1 = \delta_1(d)$, $ \delta_2 =\delta_2(d, \beta)$ under the assumption that $\delta_d = \max\big(\delta_1(d), \delta_2(d, \beta)\big) \le R$.
\end{proof}

The proof of Theorem \ref{thr_support_inference_empirical_Chf_function_Hausdorff_distance} follows by combining Lemma \ref{lm_Hausdorff_distance_population} with the concentration result in Corollary \ref{coroll_concentration_result}.

\begin{proof} [Proof of Theorem \ref{thr_support_inference_empirical_Chf_function_Hausdorff_distance}]
	By Corollary \ref{coroll_concentration_result}, we have with probability at least $1-\alpha$,
	\begin{eqnarray*}
	|\Lambda_{\mu,d_n}(x) - \Lambda_{\mu_n,d_n}(x)| &\le& \Lambda_{\mu,d_n}(x)\, \max \bigg( \sqrt{\dfrac{16\, m(d_n,p,S,w)}{3n} \log \dfrac{s(d_n)}{\alpha}},\\
	&\,&\dfrac{16 \,m(d_n,p,S,w)}{3n} \log \dfrac{s(d_n)}{\alpha} \bigg),
	\end{eqnarray*}
	
	where 	
	\begin{eqnarray*}
		m(d_n,p,S,w) &=& \dfrac{4^{p+r} s(d_n)}{C \omega_p R^{p+r}} \dfrac{(d_n+p+1)(d_n+p+2)(2d_n+p+6)}{(d_n+1)(d_n+2)(d_n+3)} \\
		&+& \dfrac{2^{p+2r} c_r}{C R^{p+r}} \left[ 2 \binom{p+d_n+2r+1}{p+2r+1} - \binom{p+d_n+2r}{p+2r} \right].
	\end{eqnarray*}
	Note that
	\begin{eqnarray*}
		&\,& \dfrac{2^{p+2r} c_r}{C R^{p+r}} \left[ 2 \binom{p+d_n+2r+1}{p+2r+1} - \binom{p+d_n+2r}{p+2r} \right] \le \dfrac{2^{p+2r+1} c_r}{C R^{p+r}} \binom{p+d_n+2r+1}{p+2r+1}\\
		&\le& \dfrac{2^{p+2r+1} c_r}{C R^{p+r}} d_n^{p+2r+1} \left(\dfrac{e}{p+2r+1}\right)^{p+2r+1} \exp((p+2r+1)^2)
	\end{eqnarray*}
	and
	\begin{eqnarray*}
		&\,& \dfrac{4^{p+r} s(d_n)}{C \omega_p R^{p+r}} \dfrac{(d_n+p+1)(d_n+p+2)(2d_n+p+6)}{(d_n+1)(d_n+2)(d_n+3)}\\ &\le& \dfrac{4^{p+r}}{C \omega_p R^{p+r}} d_n^p \left(\dfrac{e}{p}\right)^p \exp (p^2) \dfrac{(p+2)(p+3)(p+8)}{24}\\
		&\le& \dfrac{4^{p+r}}{C \omega_p R^{p+r}} d_n^{p+2r+1} \left(\dfrac{e}{p}\right)^p \exp (p^2) \dfrac{(p+2)(p+3)(p+8)}{24},
	\end{eqnarray*}
	where the inequalities come from Lemma \ref{lm_inequality_binom} and the fact that the function $$d \mapsto \dfrac{(d+p+1)(d+p+2)(2d+p+6)}{(d+1)(d+2)(d+3)}$$ is decreasing.
	Hence
	\begin{eqnarray*}
		\dfrac{16 \,m(d_n,p,S,w)}{3n}  
		&\le& \dfrac{4^{r+2}}{3n C R^{p+r}} d_n^{p+2r+1} \bigg[2^{p+1} c_r \left(\dfrac{e}{p+2r+1}\right)^{p+2r+1} \exp((p+2r+1)^2)\\
		&+& \dfrac{4^{p}(p+2)(p+3)(p+8)}{24 \omega_p} \left(\dfrac{e}{p}\right)^p \exp (p^2) \bigg].
	\end{eqnarray*}
	On the other hand,
	\begin{eqnarray*}
		\log \dfrac{s(d_n)}{\alpha} &\le& \log \left[d_n^p \left(\dfrac{e}{p}\right)^p \exp \left(\dfrac{p^2}{d_n}\right)\right] - \log \alpha = p \log d_n + p(1-\log p) + \dfrac{p^2}{d_n} - \log \alpha\\
		&\le& p d_n + p(1-\log p) + p^2 - \log \alpha\\
		&\le& d_n \left(p+p(1-\log p)+p^2 -\log \alpha\right).
	\end{eqnarray*}
	Then
	$$\dfrac{16 \,m(d_n,p,S,w)}{3n} \log \dfrac{s(d_n)}{\alpha} \le \dfrac{d_n^{p+2r+2}}{n C R^{p+r}}  C_{p,r,\alpha},$$
	where
	\begin{eqnarray*}
		C_{p,r, \alpha} &=&  \dfrac{4^{r+2}}{3} \bigg[2^{p+1} c_r \left(\dfrac{e}{p+2r+1}\right)^{p+2r+1} \exp((p+2r+1)^2)\\
		&\,& + \dfrac{4^{p}(p+2)(p+3)(p+8)}{24 \omega_p} \left(\dfrac{e}{p}\right)^p \exp (p^2) \bigg] \left(p+p(1-\log p)+p^2 -\log \alpha\right).
	\end{eqnarray*}
	Since $d_n = \left\lfloor \left(\dfrac{C R^{p+r}}{4C_{p,r,\alpha}} n\right)^{\frac{1}{p+2r+2}} \right\rfloor$, then $d_n \le \left(\dfrac{C R^{p+r}}{4C_{p,r,\alpha}} n\right)^{\frac{1}{p+2r+2}}$, which implies that
	$$\dfrac{d_n^{p+2r+2}}{n C R^{p+r}}  C_{p,r,\alpha} \le \dfrac{1}{4}$$
	and $$\max \left( \sqrt{\dfrac{16\, m(d_n,p,S,w)}{3n} \log \dfrac{s(d_n)}{\alpha}},\dfrac{16 \,m(d_n,p,S,w)}{3n} \log \dfrac{s(d_n)}{\alpha} \right) \le \dfrac{1}{2}.$$
	Now we obtain with probability at least $1-\alpha$, $$|\Lambda_{\mu,d_n}(x) - \Lambda_{\mu_n,d_n}(x)| \le \frac{1}{2} \Lambda_{\mu,d_n}(x).$$
	We set $$\delta_n^1 := \delta_1(d_n)
	,$$
	$$\delta_n^2:= \delta_2\left(d_n, \dfrac{1}{2}\right)$$
	and $$\delta_n = \max(\delta_n^1, \delta_n^2).$$
	Recall that we have set 
	\begin{eqnarray*}
		D_{p,S, w, \epsilon} &=& \max\bigg(2, \, \left(\dfrac{\diam(S)}{R}+1\right)^{\frac{1}{1-\epsilon}}, \left(\dfrac{2}{R} E_{p,r,\epsilon}\left(1, \dfrac{1}{2}\right)\right)^{\frac{1}{1-\epsilon}}\bigg).
	\end{eqnarray*}
	Then, for $n \ge n_0 := \dfrac{4 (D_{p,S,w, \epsilon}+1)^{p+2r+2} C_{p,r,\alpha}}{C R^{p+r}}$, $d_n = \left\lfloor \left(\dfrac{C R^{p+r}}{4C_{p,r,\alpha}} n\right)^{\frac{1}{p+2r+2}} \right\rfloor \ge D_{p,S,w,\epsilon}$.
	Hence $d_n \ge 2 > 1 $. Furthermore,
	$$\delta^1_n = \dfrac{\diam(S)}{d_n^{1-\epsilon}-1} \le \dfrac{\diam(S)}{D_{p,S,w,\epsilon}^{1-\epsilon}-1} \le \dfrac{\diam(S)}{\dfrac{\diam(S)}{R}+1-1} \le R$$
	and 
	\begin{eqnarray*}
	    \delta^2_n &=&
	\dfrac{2}{d_n^ {1-\epsilon}} E_{p,r,\epsilon}\left(d_n, \dfrac{1}{2}\right) \le \dfrac{2}{D_{p,S,w,\epsilon}^ {1-\epsilon}} E_{p,r,\epsilon}\left(d_n, \dfrac{1}{2}\right)\\
	&\le& \dfrac{2}{\dfrac{2}{R} E_{p,r,\epsilon}\left(1, \dfrac{1}{2}\right)} E_{p,r,\epsilon}\left(d_n, \dfrac{1}{2}\right) \le R,
	\end{eqnarray*}
	which implies that $\delta_n = \max(\delta_n^1, \delta_n^2) \le R$.
	
	Now, all the assumptions of Lemma \ref{lm_Hausdorff_distance_population} hold with probability at least $1-\alpha$ and we obtain the threshold $\gamma_n$ along with the estimator $S_n$ such that
	$$d_H(S, S_n) \le \delta_n$$
	and $$d_H(\partial S, \partial S_n) \le \delta_n.$$
\end{proof}
\subsection{Proof of Theorem \ref{thr_support_inference_empirical_Chf_function_symmetric_distance}} \label{appen_proofs_symmetric distance}
By combining the bounds of $S$ which have been shown in Lemma \ref{lm_bounds_of_support} with Assumption \ref{aspt_on_S} - part 2, we obtain a result concerning the Lebesgue measure of the symmetric difference for the thresholding scheme \eqref{eqn_thresholding_scheme_with_d_and_n}.
\begin{lemma} \label{lm_symmetric_distance_population}
	Under the assumptions and definitions of Lemma \ref{lm_bounds_of_support}, we suppose in addition that $S$ satisfies Assumption \ref{aspt_on_S} - part 2 with a constant $C_S > 0$. For any $d > 1$, the thresholding scheme \eqref{eqn_thresholding_scheme_with_d_and_n} satisfies that
	$$\lambda(S \triangle S_{d,n}) \le 2 C_S \delta_d + O((\delta_d)^2).$$
\end{lemma}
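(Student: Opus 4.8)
The plan is to exploit the sandwich $S^2 \subset S_{d,n} \subset S^1$ furnished by Lemma \ref{lm_bounds_of_support} in order to confine the symmetric difference $S \triangle S_{d,n}$ to a thin one-sided tube around $\partial S$, and then to control the Lebesgue measure of that tube by Assumption \ref{aspt_on_S} - part 2. Note that, unlike the Hausdorff-distance statement of Lemma \ref{lm_Hausdorff_distance_population}, no constraint relating $\delta_d$ to $R$ will be needed, which is why the conclusion holds for every $d>1$.

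First I would split $\lambda(S \triangle S_{d,n}) = \lambda(S \setminus S_{d,n}) + \lambda(S_{d,n} \setminus S)$, the two sets being disjoint. From $S^2 \subset S_{d,n}$ we get $S \setminus S_{d,n} \subset S \setminus S^2$, and from $S_{d,n} \subset S^1$ we get $S_{d,n} \setminus S \subset S^1 \setminus S$, so that $\lambda(S \triangle S_{d,n}) \le \lambda(S \setminus S^2) + \lambda(S^1 \setminus S)$. Next I would identify each piece with a subset of the tube $(\partial S)^{\epsilon}$ of Definition \ref{def_tube_volume}. Since $S^2 = \{x \in S : d(x,\partial S) \ge \delta_2(d,\beta)\}$, every $x \in S \setminus S^2$ lies in $S$ with $d(x,\partial S) < \delta_2(d,\beta)$, hence $S \setminus S^2 \subset (\partial S)^{\delta_2(d,\beta)}$. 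For the exterior piece, if $x \in S^1 \setminus S$ then $d(x,S) \le \delta_1(d)$, and because $x \notin S$ and $S$ is closed a metric projection of $x$ onto $S$ necessarily lies on $\partial S$ (otherwise one could move slightly from the projection toward $x$ while staying in $S$, contradicting minimality), so $d(x,\partial S) \le \delta_1(d)$ and $S^1 \setminus S \subset (\partial S)^{\delta_1(d)}$. Combining, $\lambda(S \triangle S_{d,n}) \le V_{\partial S}\big(\delta_1(d)\big) + V_{\partial S}\big(\delta_2(d,\beta)\big)$.

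Finally, Assumption \ref{aspt_on_S} - part 2 gives $V_{\partial S}(\epsilon) \le \epsilon\, C_S + O(\epsilon^2)$ for small $\epsilon>0$. Applying this with $\epsilon = \delta_1(d)$ and $\epsilon = \delta_2(d,\beta)$ — both of which are at most $\delta_d$ and tend to $0$ as $d \to \infty$ by \eqref{eqn_delta1} and \eqref{eqn_delta2} — yields $\lambda(S \triangle S_{d,n}) \le \big(\delta_1(d)+\delta_2(d,\beta)\big) C_S + O(\delta_d^2) \le 2 C_S \delta_d + O(\delta_d^2)$, which is the asserted bound; the factor $2$ is exactly the price of bounding the interior and exterior pieces separately. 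The only slightly delicate point in the argument is the elementary geometric fact that the metric projection of an exterior point onto the closed set $S$ lands on $\partial S$; everything else is a direct chain of inclusions and an application of the tube-volume estimate.
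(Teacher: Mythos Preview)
Your proof is correct and follows essentially the same approach as the paper's: split the symmetric difference, use the sandwich $S^2 \subset S_{d,n} \subset S^1$ from Lemma~\ref{lm_bounds_of_support} to bound each piece by a one-sided tube around $\partial S$, and then invoke Assumption~\ref{aspt_on_S} part~2. The only difference is that you spell out why $d(x,S)=d(x,\partial S)$ for $x\notin S$, which the paper uses implicitly.
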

\begin{proof} 
	By Lemma \ref{lm_bounds_of_support} and Assumption \ref{aspt_on_S} - part 2, we have
	\begin{eqnarray*}
		\lambda(S \triangle S_{d_n}) &=& \lambda(S \setminus S_{d,n}) + \lambda(S_{d,n} \setminus S)\\
		&\le& \lambda(S \setminus S^2) + \lambda(S^1 \setminus S)\\
		&=& \lambda(\{x \in S: d(x, \partial S) < \delta_d^2\}) + \lambda(\{x \notin S: d(x, \partial S) \le \delta_d^1\})\\
		&\le& V_{\partial S}(\delta_d^2) + V_{\partial S}(\delta_d^1)\\
		&\le& C_S(\delta_d^1 + \delta_d^2) + O((\delta_d^1)^2) + O((\delta_d^2)^2)\\
		&\le& 2 C_S \delta_d + O((\delta_d)^2).
	\end{eqnarray*}	
\end{proof}
The proof of Theorem \ref{thr_support_inference_empirical_Chf_function_symmetric_distance} follows by combining Lemma \ref{lm_symmetric_distance_population} with the concentration result in Corollary \ref{coroll_concentration_result}.
\begin{proof} [Proof of Theorem \ref{thr_support_inference_empirical_Chf_function_symmetric_distance}]
	Similar to the proof of Theorem \ref{thr_support_inference_empirical_Chf_function_Hausdorff_distance}, we obtain with probability at least $1-\alpha$, $$|\Lambda_{\mu,d_n}(x) - \Lambda_{\mu_n,d_n}(x)| \le \frac{1}{2} \Lambda_{\mu,d_n}(x).$$
	If $n \ge n_1 := \dfrac{2^{p+2r+4} C_{p,r,\alpha}}{C R^{p+r}}$, then $d_n = \left\lfloor \left(\dfrac{C R^{p+r}}{4C_{p,r,\alpha}} n\right)^{\frac{1}{p+2r+2}} \right\rfloor \ge 2 > 1$.\\
	Now, all the assumptions of Lemma \ref{lm_symmetric_distance_population} hold with probability at least $1-\alpha$ and we obtain the threshold $\gamma_n$ along with the estimator $S_n$ such that
	$$\lambda(S \triangle S_n) \le 2 C_S \delta_n + O((\delta_n)^2).$$
\end{proof}

\section{Proof of Lemma \ref{lm_smooth_boundary}} \label{appen_proof_smooth_boundary_lemma}
The following proof of Lemma \ref{lm_smooth_boundary} requires some geometry concepts which can all be found in \citet{lee2000introduction} for instance.

First, we introduce the tubular neighborhood theorem in Euclidean spaces. Given a smooth submanifold $M$ of $\R^p$, we denote by $N (M)$ its normal bundle:
$N (M) = \{(x,v): x \in M, v \in N_x\},$
where $N_x$ is the normal vector space to $M$ at $x$. We define 
\begin{eqnarray*}
	E: N (M) &\longrightarrow& \R^p\\
	\quad (x,v) &\longmapsto& x+ v.
\end{eqnarray*}
A tubular neighborhood of $M$ is a neighborhood $U$ of $M$ in $\R^p$ that is the diffeomorphic image under $E$ of an open set $V \subset N (M)$ of the form
$$V = \{(x,v) \in N (M) : \|v\|_2 < \epsilon(x)\},$$
for some positive continuous function $\epsilon: M \in \R_+$. The following theorem regarding the existence of a tubular neighborhood comes from \cite{lee2000introduction}, Theorem 6.24.
\begin{theorem} [Tubular neighborhood theorem]
	Every embedded submanifold of $\R^p$ has a tubular neigborhood.
\end{theorem}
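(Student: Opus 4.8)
The plan is to follow the classical argument (as in \citet{lee2000introduction}): show that $E$ is a local diffeomorphism near the zero section, and then upgrade local injectivity to injectivity on a full ``tube''. Write $M_0 = \{(x,0) : x \in M\} \subset N(M)$ for the zero section, a smooth submanifold diffeomorphic to $M$, and note that $N(M)$ is itself a smooth manifold of dimension $p$: if $\dim M = k$, the fibers $N_x$ have dimension $p-k$ and $N(M)$ is a smooth subbundle of the trivial bundle $M \times \R^p$ (use local orthonormal frames for the normal spaces). The map $E(x,v) = x+v$ is the restriction to $N(M)$ of a linear map on $M \times \R^p$, hence smooth, and $E|_{M_0}$ is exactly the identification $M_0 \cong M$.

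First I would compute the differential of $E$ along the zero section. A tangent vector to $N(M)$ at $(x,0)$ is represented by a curve $t \mapsto (\gamma(t), v(t))$ with $\gamma(0)=x$, $v(0)=0$, $v(t) \in N_{\gamma(t)}$; differentiating $\langle v(t), X(\gamma(t))\rangle = 0$ at $t=0$ for vector fields $X$ tangent to $M$ shows $v'(0) \in N_x$, so $T_{(x,0)}N(M)$ is canonically $T_xM \oplus N_x$, of dimension $p$. Along such a curve $E(\gamma(t),v(t)) = \gamma(t)+v(t)$, hence $dE_{(x,0)}(w,u) = w+u$ for $w \in T_xM$, $u \in N_x$. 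Since $\R^p = T_xM \oplus N_x$ is an orthogonal direct sum, $dE_{(x,0)}$ is a linear isomorphism. By the inverse function theorem, $E$ is a local diffeomorphism at each point of $M_0$; the set where $dE$ is invertible is open, so there is an open $\mathcal{O} \subset N(M)$ with $M_0 \subset \mathcal{O}$ on which $E$ is everywhere a local diffeomorphism, in particular an open map injective on a neighborhood of each point.

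The crux is to find a continuous $\epsilon \colon M \to (0,\infty)$ with $V := \{(x,v) \in N(M) : \|v\|_2 < \epsilon(x)\} \subset \mathcal{O}$ and $E|_V$ injective; granting this, $E|_V \colon V \to U := E(V)$ is a smooth injective local diffeomorphism, hence a diffeomorphism onto the open set $U$, and $U \supseteq E(M_0) = M$, so $U$ is the required tubular neighborhood. To construct $\epsilon$: for each $x$, local injectivity together with the local product structure of $N(M)$ near $x$ yields a neighborhood $W_x \ni x$ in $M$ and $r_x > 0$ with $E$ injective on $\{(y,v) : y \in W_x,\ \|v\|_2 < r_x\}$; then one takes a countable locally finite refinement $\{W_i\}$ of $\{W_x\}$ by precompact sets, shrinks it to $\{W_i'\}$ with $\overline{W_i'} \subset W_i$ still covering $M$, and defines $\epsilon$ via a subordinate partition of unity so small that any two points of $V$ with the same image under $E$ are forced into a common box $\{(y,v): y \in W_i,\ \|v\|_2 < r_i\}$, contradicting injectivity there.

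The main obstacle is exactly this last patching step in the non-compact case: making $\epsilon$ uniform enough to exclude ``long-range'' collisions $E(x,v)=E(x',v')$ with $x,x'$ far apart requires the locally finite cover and careful bookkeeping, whereas everything else is formal. I note, however, that in the only situation needed in this paper, $M = \partial S$ with $S$ compact (see Lemma~\ref{lm_smooth_boundary}), compactness makes this step elementary: if no uniform injectivity tube existed there would be sequences $(x_n,v_n) \neq (x_n',v_n')$ in $\mathcal{O}$ with $\|v_n\|_2, \|v_n'\|_2 \to 0$ and $E(x_n,v_n) = E(x_n',v_n')$; passing to convergent subsequences $(x_n,v_n) \to (a,0)$, $(x_n',v_n') \to (b,0)$ and using continuity of $E$ gives $a=b$, so for large $n$ both points lie in a single neighborhood of $(a,0)$ on which $E$ is injective — a contradiction — and one may then take $\epsilon$ constant.
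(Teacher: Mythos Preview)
Your sketch is correct and follows the classical argument from \citet{lee2000introduction} that the paper cites. Note, however, that the paper does not supply its own proof of this statement: it simply invokes Theorem~6.24 of \citet{lee2000introduction} as a black box and then uses the conclusion in the proof of Lemma~\ref{lm_smooth_boundary}. So there is no ``paper's proof'' to compare against beyond the reference itself, and your write-up is essentially a condensed version of Lee's argument. Your final paragraph is on point for this paper's needs: since the only application is to $M=\partial S$ with $S$ compact, the sequential-compactness argument you give suffices to produce a \emph{constant} $\epsilon$, which is exactly how the result is used in Appendix~\ref{appen_proof_smooth_boundary_lemma}.
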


\begin{proof} [Proof of Lemma \ref{lm_smooth_boundary}]	    First, we notice that an embedded hypersurface in $\R^p$ which is a closed set of $\R^p$ is orientable (see \cite{samelson1969orientability}). Hence by assumption, we can give $\partial S$ an orientation which sides coincide with the interior and exterior of $S$ (obviously, $\partial S$ is closed).\\
Now, since $\partial S$ is an embedded hypersurface in $\R^p$, we can apply the tubular neighborhood theorem. Moreover, the fact that $\partial S$ is compact (a closed set in $S$ which is compact) implies that there exists a constant $\epsilon > 0$ and an open neighborhood $U$ of $\partial S$ such that $E$ is a diffeomorphism from $V = \{(x,v) \in N  (\partial S): \|v\|_2 < \epsilon\}$ to $U$. We set $R = \frac{\epsilon}{2}$ and we will check that $R$ satisfies the claimed hypothesis, i.e. for all $x \in S$, there exists $z \in S$ such that $x \in \overline{B}_R(z) \subset S$.\\
	For $x \in S$ fixed, we will consider two cases where $x \in U$ and $x \notin U$ separately.\\
	If $x \notin U$, we take $z =x$ and we will prove that $\overline{B}_R(z) \subset S$. We consider $y \in \text{proj}_{\partial S}(x)$, which means $y \in \partial S$ and $d(x, \partial S) = \|x-y\|_2$. Note that $x-y \in N_x$, where $N_x$ is the normal vector space of $\partial S$ at $x$. If $\|x-y\|_2 < \epsilon$, then $(y, x-y) \in V$, hence $x=y+x-y= E(y, x-y) \in U$ which is a contradiction. Thus $d(x, \partial S)=\|x-y\|_2 \ge \epsilon$, which implies $\overline{B}_R(z) \subset \overline{B}_\epsilon(z) \subset S$.\\
	Assume now that $x \in U$. First, we will prove that the projection on $\partial S$ of an arbitrary $z \in U$ is unique. Indeed, since $z \in U$ and $E|_V: V \rightarrow U$ is a diffeomorphism, there exists a unique $y_z \in \partial S$ and $v_z \in N_{y_z}$ such that $\|v_z\|_2 < \epsilon$ and $z=y_z+v_z$. If $y \in \text{proj}_{\partial S}(z)$, then $v=z-y \in N_{y}$ and $\|v\|_2 = \|z-y\|_2 = d(z, \partial S) \le \|z-y_z\|_2 = \|v_z\|_2 < \epsilon$. By bijectivity of $E|_V$, $(y,v) =(y_z,v_z)$ which is unique.\\
	Now, when $x \in U \cap S$, let $y$ be its unique projection on $\partial S$. We can write $x$ as $x = y + \lambda v_0$, where $v_0$ is the inner pointing unit vector of $\partial S$ at $x$, $0 \le \lambda < \epsilon$ by the previous argument. We set $z = x + R v_0$ and we will prove that $x \in \overline{B}_R(z) \subset S$.
	First, we have $z \in S$ since otherwise, if $z=y+Rv_0 \notin S$ when $y \in \partial S$ and $v_0$ is inner pointing, then there exists a $\tilde{R} \in (0, R) \subset (0, \epsilon)$ such that $y+ \tilde{R} v_0 \in \partial S$. Note that two couples $(y, \tilde{R} v_0)$ and $(y+\tilde{R}v_0, 0)$ in $V$ have the same image by $E$. The bijectivity of $E|_V$ implies that $\tilde{R} = 0$, which is a contradiction. Now, by the definition of $z$, we have $z \in U$ and $y$ is its unique projection on $\partial S$, hence $d(z, \partial S) = \|z-y\|_2 = R$, which implies $\overline{B}_R(z) \subset S$. Finally $x \in B_R(z)$ since $\|x-z\|_2 = |\lambda-R| \le R$, which comes from the fact that $\lambda < \epsilon$ and $R=\epsilon/2$.
	Now, we have already proved that for any $x \in S$, a ball of radius $R$ containing $x$ is included in $S$, i.e. a ball of radius $R$ rolls inside $S$. Similarly, a ball of radius $R$ rolls outside $S$. The first part of Assumption \ref{aspt_on_S} is proved.
	
	For the second part, since $\partial S$ is a smooth sub-manifold of $\R^p$ with co-dimension 1, by Weyl's formula which was for instance provided in \cite{gray2012tubes}, we have
	$$V_{\partial S}(\epsilon) = 2\epsilon \sum\limits_{c=0}^{[(p-1)/2]} \dfrac{k_{2c}(\partial S) \epsilon^{2c}}{1.3 \ldots (2c+1)} = \sum\limits_{c=0}^{[(p-1)/2]} \dfrac{2k_{2c}(\partial S) }{1.3 \ldots (2c+1)} \epsilon^{2c+1},$$
	where $k_{2c}(\partial S)$ are numbers depending on $\partial S$ but not on $\epsilon$ and $k_0$ is the volume of $\partial S$. Then we have
	$$V_{\partial S}(\epsilon) =  \epsilon \, C_S  + O(\epsilon^3),$$ where $C_S = 2 \text{Vol}(\partial S)$ only depends on $S$.
\end{proof}

\end{document}